\DeclareMathAlphabet{\mathpzc}{OT1}{pzc}{m}{it}
\newtheorem{thm}{Theorem}[section]
\newtheorem{cor}[thm]{Corollary}
\newtheorem{prop}[thm]{Proposition}
\newtheorem{lem}[thm]{Lemma}
\theoremstyle{definition}
\newtheorem{defn}[thm]{Definition}
\newtheorem{rk}[thm]{Remark}
\numberwithin{equation}{section}
\newcommand{\Ex}{\mathbb{E}}
\newcommand{\Con}{\mathrm{C}}
\newcommand{\mE}{\mathbf{E}}
\newcommand{\e}{\varepsilon}
\title[Erratic KPZ limits via moments]{%Moment-based approach to a KPZ scaling limit with variance blowup}%
%Moment-based approach to a variance blowup problem for the KPZ equation
Moment-based approach for two erratic KPZ scaling limits%erratically behaved 
%KPZ scaling limits with rougher than white noise%problems
}
\author{Shalin Parekh}
\begin{document}

\begin{abstract}A recent paper of Tsai \cite{Ts24} shows how the first few moments of a stochastic flow in the space of measures can completely determine its law. Here we give another proof of this result for the particular case of the one-dimensional multiplicative stochastic heat equation (mSHE), and then we investigate two corollaries. %That result is remarkable in the sense that it makes rigorous various ``replica tricks" which physicists have long used, but which were thought to be non-rigorous from a mathematical perspective. 
%Here, we consider some implications of this result in the context of one-dimensional systems.%, giving a short proof based on stochastic calculus rather than Lindeberg replacement.%. We change the assumptions slightly and then give %. In the one-dimensional case, we are able to give 
%a short proof based on stochastic calculus rather than Lindeberg replacement. %Instead of relying on the Lindberg principle as done in that paper, we use a martingale methods and stochastic calculus instead. This method shortens the proof compared to the Lindberg technique, but a major drawback is that it only works in cases where it is known that there is a driving white noise underlying the random process of interest. 
The first one recovers a recent result of Hairer on a ``variance blowup" problem related to the KPZ equation \cite{Hai24}, albeit in a much weaker topology. The second one recovers a KPZ scaling limit result related to random walks in random environments from \cite{Par24}, but in a weaker topology. In these two problems, we furthermore explain why it is hard to directly use the martingale characterization of the mSHE, the chaos expansion, or other known methods. Using the moment-based approach avoids technicalities, leading to a short proof.
\end{abstract}

\maketitle
\vspace{-0.3 in}

\section{Introduction}

The (1+1)-dimensional multiplicative-noise stochastic heat equation is a stochastic partial differential equation (SPDE) written formally as  \begin{equation}\partial_t \mathcal U(t,y) = \frac12 \partial_y^2 \mathcal U(t,y) + \mathcal U(t,y)\xi(t,y) ,\;\;\;\;\;\;\; t\ge 0, y\in \Bbb R, \label{she}\tag{mSHE}
    \end{equation}
where $\mathcal U(0,y) = \mathcal U_0(y)$ and $\xi$ is a Gaussian space-time white noise, that is, the Gaussian field with covariances $\Bbb E[\xi(t,x)\xi(s,y)] =\delta(t-s)\delta(x-y)$. Such $\xi$ will be a tempered distribution that is not a function, thus one needs some work to make sense of this object. Using stochastic calculus it is possible to define a natural notion of solution started from any nonnegative measure-valued initial condition $\mathcal U_0$ of at-worst-exponential growth at infinity, see \cite{Wal86}. This is called the It\^o solution or It\^o-Walsh solution of the SPDE given by \eqref{she}. 

It turns out that the It\^o-Walsh solution $\mathcal U$ is a random function that remains strictly positive at all positive times almost surely, as long as the initial data $\mathcal U_0$ is a non-negative and nonzero Borel measure. This random function $\mathcal U$ $-$ and its logarithm, which solves the so-called Kardar-Parisi-Zhang (KPZ) equation $-$ has been the subject of intense research in recent decades due to its connection to a number of active areas, including exactly solvable systems, interface growth models, interacting particle systems, directed polymers, KPZ universality, and stochastic analysis. We refer to \cite{FS10, Qua11, Cor12, QS15, CW17, CS20} for some surveys on this topic.

A recent work of Tsai \cite{Ts24} studies the (2+1)-dimensional analogue of \eqref{she}, which is more difficult to define because it is not known to be the strong solution of an SPDE like the (1+1)-dimensional case, see \cite{CSZ_2d, CSZ23b}. The Tsai paper shows that the (2+1)-dimensional analogue of \eqref{she} is \textit{uniquely characterized in law as a measure-valued stochastic flow using only its first four moments.} That result is remarkable in the sense that it makes rigorous various ``moment-based replica tricks" which physicists have long used, but which were thought to be non-rigorous from a mathematical perspective due to the extremely fast growth of the moments. Inspired by \cite{Ts24}, we use a similar methodology for the \textit{one-dimensional} object \eqref{she} in order to study two ``unusual" problems in KPZ scaling limit theory that exhibit very erratic behavior. The two corresponding main results are Theorems \ref{mr2} and \ref{mr3} below. %In the one-dimensional case, we are able to give a different proof based on stochastic calculus rather than Lindeberg replacement. Instead of relying on the Lindberg principle as done in \cite{Ts24}, we use martingale methods and stochastic calculus instead. This method shortens the proof compared to the Lindberg technique, but a major drawback is that it only works in cases where it is known that there is a driving white noise underlying the random process of interest. 

First let us introduce some notations. We let $\mathcal M(\Bbb R^2)$ denote the set of locally finite and non-negative Borel measures on $\Bbb R^2$. We equip $\mathcal M(\Bbb R^2)$ with the vague topology, that is, the weakest topology under which the maps $\mu \mapsto \int_{\Bbb R^2} f\;d\mu$ are continuous, for all $f\in C_c^\infty(\Bbb R^2)$. We also denote $\Bbb R^4_{\uparrow}:= \{ (s,t,x,y) \in \Bbb R^4: s<t\}$, and $C(\Bbb R^4_{\uparrow})$ the set of functions from $\Bbb R^4_\uparrow\to\Bbb R$ that are continuous. Equip $C(\Bbb R^4_{\uparrow})$ with the topology of uniform convergence on compacts. $\Bbb Q$ will denote rational numbers, but in all results and proofs it could be replaced by any countable dense subset of $\Bbb R$ as desired. The following result is due to Tsai \cite{Ts24}, and we state a less optimized (1+1)-dimensional version of it that will suffice for our purposes.

\begin{prop}[(1+1)-dimensional variant of Theorem 1.3 in \cite{Ts24}]\label{mr}
    Let $Z_{s,t}$ be a family of $\mathcal M(\Bbb R^2)$-valued random variables indexed by $-\infty <s\le t<\infty$ with $s,t\in\Bbb Q,$ all defined on some complete probability space $(\Omega,\mathcal F,\Bbb P)$. Suppose that the following are true: %the following properties are satisfied by the finite-dimensional marginals $(Z_{s_1,t_1}, ..., Z_{s_m,t_m})$ of this process.
    \begin{enumerate}
    %\item $Z$ is almost surely continuous from $\{(s,t):s\le t\}\to\mathcal M(\Bbb R^2)$, in the weak* topology.

    \item $Z_{s_j,t_j}$ are independent under $\Bbb P$ whenever $(s_j,t_j)$ are disjoint intervals.

        \item For all $f\in C_c^\infty(\Bbb R^2)$ and all $\psi\in C_c^\infty(\Bbb R)$ with $\int_\Bbb R\psi=1$, we have that $$\int_{\Bbb R^4 } \delta^{-1}\psi(\delta^{-1}(y-w))\cdot f(x,z) Z_{t,u}(dy,dz) Z_{s,t}(dx,dw)  \to \int_{\Bbb R^2} f(x,z) Z_{s,u}(dx,dz)$$ in probability as $\delta\to 0$, whenever $s<t<u$.% and $x,z\in\Bbb R$.

        \item For $n\in \Bbb N$ and $\phi,\psi\in C_c^\infty(\Bbb R)$ we have that \begin{align*}\Bbb E\bigg[ \bigg( \int_{\Bbb R^2}& \phi(x) \psi(y)Z_{s,t}(dx,dy) \bigg)^n \bigg] \\&= \int_{\Bbb R^{n}} \prod_{k=1}^n \phi(x_k) \mathbf E_{\mathrm{BM}^{\otimes n}}^{(x_1,...,x_n)} \bigg[\prod_{k=1}^n \psi(B^k_{t-s}) e^{\sum_{1\le i<j\le n} L_0^{B^i-B^j}(t-s)} \bigg] dx_1 \cdots dx_n.
        \end{align*}
        The expectation on the right side is with respect to a $n$-dimensional Brownian motion $(B^1,...,B^n)$ started from $(x_1,...,x_n)$, and $L_0^X$ denotes the local time at 0 of the semimartingale $X$.
    \end{enumerate}
    Then %$Z_{s,t}$ admits a modification $\tilde Z_{s,t}(x,y)$ that is almost surely continuous on compact subsets of $\{(s,t,x,y): s<t, (x,y)\in\Bbb R^2\},$ in other words we have 
    there exists a $C(\Bbb R^4_\uparrow)$-valued random variable $\tilde Z$ defined on the same probability space, such that $(Z_{s,t},f) = \int_{\Bbb R^2} f(x,z) \tilde Z_{s,t}(x,z)dxdz$ almost surely, for all $(s,t)\in\Bbb Q^2$ (with $s<t$) and $f\in C_c^\infty(\Bbb R^2)$. 
    
    Furthermore, there exists a space-time white noise $\xi$ on $\Bbb R^2$, defined on the same probability space, with the property that $\tilde Z_{s,t}$ are the \textbf{propagators} of \eqref{she} driven by $\xi$: for all $s,x\in \Bbb R$ the function $(t,y)\mapsto \tilde Z_{s,t}(x,y)$ is a.s. the It\^o-Walsh solution of the equation \eqref{she} started from time $s$ with initial condition $\delta_x(y)$. In particular, the finite-dimensional marginal laws $(Z_{s_1,t_1},...,Z_{s_m,t_m})$ are $\mathrm{uniquely\; determined}$ by Conditions (1) - (3), for any finite collection of indices $s_j\le t_j$.
\end{prop}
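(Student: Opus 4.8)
The plan is: first produce the continuous density $\til Z$ from the moment bounds of Condition (3); then reduce the uniqueness of the finite-dimensional laws to the law of a single time-increment via Conditions (1)--(2); then identify that single-increment law with the mSHE by a Lindeberg-type interpolation in the time variable; and finally reconstruct $\xi$ as a measurable functional of the flow. Throughout, let $\mathcal P_{s,t}\in\mathcal M(\R^2)$ denote the analogous object built from the genuine It\^o--Walsh solution, $\mathcal P_{s,t}(dx,dy):=\mathcal U^{s,x}(t,y)\,dx\,dy$; by the It\^o--Walsh theory (locality of $\xi$, the Chapman--Kolmogorov/flow property, the classical moment formula, and strict positivity) the family $\{\mathcal P_{s,t}\}_{s<t}$ satisfies Conditions (1)--(3).

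For the density, take $\phi,\psi$ in Condition (3) to run through approximate identities: the $n=2$ case shows that the mollifications $Z_{s,t}\ast\rho_\e$ form a Cauchy net in $L^2(\Omega;L^2_{\mathrm{loc}}(\R^2))$ as $\e\to0$, yielding an $L^2$ density, and the higher-$n$ cases, read with test functions approaching point masses, furnish Hölder bounds on the spatial increments of that density, so Kolmogorov's criterion produces the continuous representative $\til Z$ and its joint continuity in all variables. For the reduction, fix indices $s_1\le t_1,\dots,s_m\le t_m$ in $\Bbb Q$ and a common rational refinement $r_0<\dots<r_M$ of their endpoints; iterating Condition (2) writes each $(Z_{s_j,t_j},f)$ as an in-probability limit of a fixed multilinear functional of $(Z_{r_0,r_1},\dots,Z_{r_{M-1},r_M})$, and Condition (1) makes these independent. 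Hence the joint law of $(Z_{s_1,t_1},\dots,Z_{s_m,t_m})$ is determined by the individual laws of single increments, and it suffices to show that for every rational $a<b$ the law of $Z_{a,b}$ equals the law of $\mathcal P_{a,b}$.

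To do that I would swap time-increments one at a time. Fix $a<b$; for $N\ge1$ put $\delta=(b-a)/N$, $\rho_i=a+i\delta$, $W_i:=Z_{\rho_{i-1},\rho_i}$ and $V_i:=\mathcal P_{\rho_{i-1},\rho_i}$. By Condition (1) each of the families $(W_i)_{i}$ and $(V_i)_{i}$ is independent, by Condition (3) each $W_i$ has the \emph{same} moments as $V_i$, and iterating Condition (2),
\[
Z_{a,b}=W_1\circ\cdots\circ W_N,\qquad \mathcal P_{a,b}=V_1\circ\cdots\circ V_N,
\]
where $\circ$ is the measure composition of Condition (2). Test the composed measures against a smooth product $\phi\otimes\psi$ and interpolate between the two compositions by replacing each $W_i$ by $V_i$ successively. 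An increment over a time span $\delta$ is close, as $\delta\to0$, to the ``identity'' heat kernel $p_\delta$, with a fluctuation (smeared against smooth functions) that is small and is controlled to leading orders by a bounded number of its moments; since $W_i$ and $V_i$ have identical moments, a single swap changes $\E[F(W_1\circ\cdots\circ(\,\cdot\,)\circ\cdots\circ V_N)]$ only by $o(1/N)$, the ``composition-with-a-hole'' being a regular enough functional of its slot by the $N$-uniform moment bounds that Condition (3) supplies for $N$-fold compositions (these merely reproduce formula (3) for $Z_{a,b}$ itself). Summing the $N$ swaps, $|\E[F(Z_{a,b})]-\E[F(\mathcal P_{a,b})]|\to0$ as $N\to\infty$; since the left side is independent of $N$ the two laws coincide, and combined with the previous paragraph, every finite-dimensional law of $\{\til Z_{s,t}\}$ equals that of $\{\mathcal P_{s,t}\}$.

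Finally, for the genuine mSHE the noise $\xi$ is a measurable functional of the propagators --- it can be read off from the semimartingale decomposition of $t\mapsto(\mathcal U^{s,x}(t,\cdot),g)$, dividing out by $\mathcal U^{s,x}$ using strict positivity --- so applying that functional to our $\{\til Z_{s,t}\}$ (which has the law of the mSHE propagators) yields a space-time white noise $\xi$ on the original probability space for which $\til Z_{s,t}$ are the It\^o--Walsh propagators of \eqref{she}. The crux of all this is the Lindeberg step: the composition in Condition (2) is \emph{singular}, gluing a $y$-coordinate to an $x$-coordinate through a $\delta$-function, so one must establish that ``composition with a hole'' is a sufficiently regular functional of its slot in a topology weak enough to be controlled using only (3) yet strong enough for the swap bookkeeping to close, and one must verify that the non-Gaussian part of each small increment contributes only $o(1/N)$ after $N$ compositions (this is where the first few moments enter; in 1D there is ample room, as (3) supplies all of them). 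The super-exponential moment growth $e^{cn^3}$ in Condition (3) is no obstruction here, since each swap uses only boundedly many moments of each factor, but it is exactly what defeats the naive strategy of recovering the law of $Z_{a,b}$ from all its moments at a single scale --- which is why the multi-scale flow structure of Conditions (1)--(2) is indispensable.
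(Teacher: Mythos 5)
Your overall architecture (build the continuous density from Condition (3), reduce all finite-dimensional laws to the law of a single increment via Conditions (1)--(2), identify that single-increment law, then read the noise off the flow) is reasonable, and the first, second and last steps are essentially workable; the last one is what the paper does directly on the given probability space via Walsh's theory in Proposition \ref{jnoise}, and your ``transfer by equality of laws'' variant would additionally need the small remark that the reconstructing stochastic integral can be realized as a canonical measurable functional of the path. But your identification step is a Lindeberg exchange in the time variable, i.e.\ the strategy of \cite{Ts24}, whereas the paper deliberately avoids it: it shows that matching moments of partition sums forces the quadratic variation of the martingales $M_t(\phi)$ (Proposition \ref{2}), invokes the martingale characterization of \eqref{she} \cite{BG97}, pins down the Dirac initial data, and then reconstructs $\xi$; no swap bookkeeping is needed.

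The genuine gap is in the crux you flag and then do not close: the claim that one swap costs $o(1/N)$. First, testing the composed measure against $\phi\otimes\psi$ is a linear functional, so it only reproduces the first moment, which is already known; to determine the law of $Z_{a,b}$ you must run the swap over a determining class of nonlinear bounded functionals $F$, and then the per-swap error is a Taylor remainder of the ``composition with a hole'' in its slot. That remainder does not pair the increment's fluctuation against fixed smooth test functions (where matched moments and $\delta^{3/2}$-type bounds would finish the argument); because of the singular gluing it pairs the fluctuation against the random densities of the neighbouring factors, which are only H\"older-$\tfrac12$ in space with moment control, and proving that this pairing is still $o(1/N)$, uniformly over the $N$ slots and over the position of the hole, is precisely the analytic content of \cite{Ts24} and occupies most of that paper. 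Nothing in your sketch supplies this estimate: ``regular enough functional of its slot by the $N$-uniform moment bounds'' is an assertion, not an argument, and Condition (3) by itself concerns a single increment, so the uniform-in-$N$ control of the hole functional must be manufactured from the flow identity, which again requires the quantitative stability you have not established. As it stands, the proposal is a faithful outline of Tsai's route with its central lemma left open, rather than a proof; either supply that estimate, or switch to the paper's stochastic-calculus identification, which sidesteps it entirely.
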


Although the above result is not the main focus of this paper, we will give a novel proof of it in Section \ref{s2} for the sake of completeness. However, our proof is specific to the one-dimensional case and it is not as robust as the proof in \cite{Ts24}. The proof is based on stochastic calculus techniques rather than the Lindeberg replacement method of \cite{Ts24}, and %We will give a new proof of this result that is specific to the one-dimensional case. 
%The proof %will be specific to the one-dimensional case, showing 
shows that these three conditions actually imply the \textit{martingale problem} for the equation \eqref{she} \cite{SV, KS88, BG97}, and also that the noise can be reconstructed via a stochastic integral formula. In Condition (3), we only need the equality to hold for $1\le n \le 19$ as opposed to all $n\in \Bbb N$, as the proof will show (see Remark \ref{wm}). Optimally it should be 4 moments, as shown for the (2+1)-dimensional analogue in \cite{Ts24}. %We remark that the proof given below would also work with a mollified noise in space, as well as for higher-dimensional SHE with a mollified noise. It would also work for systems with periodic or other boundary conditions, as long as the martingale problem is known. %Our proof seems to give a slightly more succinct proof in cases where there is a martingale problem.
%Our method of proof is \textit{less robust than the Lindeberg-replacement-based approach} in \cite{Ts24}, for two reasons.  The first reason is because we require 19 moments rather than just 4. The second reason is because our proof does \textbf{not} apply in situations where there is no martingale problem or underlying driving noise (which is indeed expected to be the case for the two dimensional analogue of \eqref{she} studied in \cite{Ts24}). The assumption of 19 moments is non-optimal, and it can likely be relaxed to 4 moments, but the proof would become longer. 
%Since we wanted the paper to be short and self-contained as possible, we did not state the strongest result. 
The weaker version above has a short proof and suffices for the two main problems that we want to study, see Theorems \ref{mr2} and \ref{mr3} below.

%The reason we need 19 moments (as opposed to 4) is because these extra moments are important to construct the continuous modification and to ensure that the driving noise is definable on the same probability space, however it is still true that four moments will suffice to identify the law \textit{once the continuous modification has been constructed.} \cite{Ts24} assumes the continuity of the process beforehand, which is one reason why less moments are needed. We only assume conditions on the finite marginals.

From the perspective of scaling limit theory, the value of a result like Proposition \ref{mr} is that it can be used to identify limit points quite easily in cases where the moments are known. This now leads to the question of whether or not there are any interesting problems in scaling limit theory that can be solved using the above approach, which cannot be done directly using some other well-established methods such as the martingale problem for the equation \eqref{she}. The following theorem was first proved by Hairer in \cite{Hai24}, and we now recover the result using Proposition \ref{mr}, albeit in a much weaker topology.

\begin{thm}[Scaling limit of the SHE with derivative noise]\label{mr2}
    Fix some $p\in\Bbb N$. Consider the SPDE 
    \begin{equation}\label{dshe}\tag{dSHE}\partial_t z^\epsilon(t,y) = \tfrac12 \partial_y^2 z^\epsilon(t,y) + \epsilon^{p-\frac14} z^\epsilon(t,y) \partial_y^p\eta^\epsilon(t,y),\;\;\;\;\;\;\;\;\;\;\; t\ge 0, y\in \Bbb R,
    \end{equation}where $\eta^\epsilon:= \varphi_\epsilon *\eta$ for a standard Gaussian space-time white noise $\eta$ on $\Bbb R_+\times\Bbb R$ and where the convolution is in space only, and where $  \varphi_\epsilon (y):= \epsilon^{-1} \varphi(\epsilon^{-1}y)$ for some fixed smooth even mollifier $\varphi \in C_c^\infty(\Bbb R)$.

    Let $Z^\epsilon_{s,t}(x,y)$ denote the associated family of propagators given by the It\^o-Walsh solution $z^\epsilon$ to the equation \eqref{dshe}. Then $Z^\epsilon$ converges in law as $\epsilon\to 0$ to the family of propagators given by the It\^o-Walsh solution of the SPDE $$\partial_t \mathcal U(t,y) = \tfrac12 \partial_y^2 \mathcal U(t,y) +  \sigma_p \mathcal U(t,y) \xi(t,y), \;\;\;\;\;\;\;\;\;\;\;\;\sigma_p^2 := \int_{\Bbb R} (\varphi * \varphi^{(2p-1)}(x))^2 dx$$ where $\varphi^{(k)}$ is the $k^{th}$ derivative and $\xi$ is another space-time white noise. Convergence is in the sense of finite-dimensional laws $(Z^\epsilon_{s_1,t_1},...,Z^\epsilon_{s_m,t_m}),$ where each coordinate has the topology of $\mathcal M(\Bbb R^2)$, for any finite collection of indices $s_j\le t_j$.
\end{thm}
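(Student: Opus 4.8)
The plan is to identify every subsequential limit of $(Z^\epsilon_{s,t})$ by means of Proposition~\ref{mr}. Work with the countable index set $\{(s,t)\in\mathbb{Q}^2:s<t\}$. Since the noise term in \eqref{dshe} is a mean-zero It\^o integral, $\mathbb{E}\big[\int_{\mathbb{R}^2}f\,dZ^\epsilon_{s,t}\big]=\int_{\mathbb{R}^2}f(x,y)\,p_{t-s}(y-x)\,dx\,dy$ for all $f\in C_c^\infty(\mathbb{R}^2)$ (with $p$ the heat kernel), so the intensity measures of $Z^\epsilon_{s,t}$ are bounded on compacts uniformly in $\epsilon$; hence each $Z^\epsilon_{s,t}$ is tight in $\mathcal{M}(\mathbb{R}^2)$ with the vague topology, and a diagonal argument over the countable index set gives tightness of the whole family in the product space. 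Fix a subsequence $\epsilon_n\downarrow 0$ along which $(Z^{\epsilon_n}_{s,t})_{s<t,\,s,t\in\mathbb{Q}}$ converges in law to a family $Z^0=(Z^0_{s,t})$, realized on a complete probability space. It then suffices to check that $Z^0$ satisfies Conditions (1)--(3) of Proposition~\ref{mr}, but with the noise strength $\sigma_p$, i.e.\ Condition (3) carries a factor $\sigma_p^2$ in front of the local-time sum (the argument proving Proposition~\ref{mr} applies verbatim with a fixed noise strength; alternatively this reduces to the stated case by a diffusive rescaling). Indeed, the propagators of $\partial_t\mathcal U=\tfrac12\partial_y^2\mathcal U+\sigma_p\mathcal U\xi$ also satisfy these three conditions, so once $Z^0$ does, every subsequential limit has the same finite-dimensional marginals as those propagators, which yields the finite-dimensional convergence claimed (the extension from rational to arbitrary time indices is a routine continuity argument, uniform in $\epsilon$).

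Condition (1) for $Z^0$ is immediate: at each $\epsilon$ the propagators over disjoint time intervals are measurable functions of disjoint portions of the white noise $\eta$, hence independent, and independence of finitely many coordinates passes to weak limits. For Condition (2) I would exploit that at each $\epsilon>0$ the propagators obey the \emph{exact} composition $Z^\epsilon_{s,u}(x,z)=\int_{\mathbb{R}}Z^\epsilon_{t,u}(y,z)\,Z^\epsilon_{s,t}(x,y)\,dy$. Writing $A_\delta(\cdot)$ for the mollified double integral in Condition (2) with mollification parameter $\delta$, and $B(\cdot)=\int f\,dZ_{s,u}$ for its claimed limit, the exact composition together with the uniform-in-$\epsilon$ spatial $L^2$-regularity of the propagators (a second-moment estimate via the moment formula below) gives $\mathbb{E}\,|A_\delta(Z^\epsilon)-B(Z^\epsilon)|^2\le g(\delta)$ with $g(\delta)\to 0$ \emph{uniformly} in $\epsilon$. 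Since for fixed $\delta$ the functional $A_\delta$ is continuous in the product vague topology (after a routine truncation handling the non-compactly-supported gluing kernel) and $Z^{\epsilon_n}_{s,t},Z^{\epsilon_n}_{t,u}$ are independent and converge jointly with $Z^{\epsilon_n}_{s,u}$, Fatou's lemma yields $\mathbb{E}\,|A_\delta(Z^0)-B(Z^0)|^2\le g(\delta)\to0$, which is Condition (2) for $Z^0$.

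Condition (3) is where the real work lies. At each $\epsilon$ the It\^o-Walsh solution satisfies the analogue of the formula in Condition (3) of Proposition~\ref{mr}, with the exponent $\sum_{i<j}L_0^{B^i-B^j}(t-s)$ replaced by $\sum_{i<j}\int_0^{t-s}C^\epsilon(B^i_r-B^j_r)\,dr$, where $C^\epsilon(z)=(-1)^p\epsilon^{-3/2}\rho^{(2p)}(z/\epsilon)$ with $\rho:=\varphi*\varphi$ is the spatial covariance of the driving noise $\epsilon^{p-1/4}\partial_y^p\eta^\epsilon$. The delicate point is the asymptotics of the random potential. One has $\int_{\mathbb{R}}C^\epsilon(z)\,dz=\int_{\mathbb{R}}z\,C^\epsilon(z)\,dz=0$ (since $p\ge1$ and $\rho$ is compactly supported) while the $L^1$-mass of $C^\epsilon$ is of order $\epsilon^{-1/2}$ and concentrated at scale $\epsilon$; consequently $\int_0^{t-s}C^\epsilon(B^i_r-B^j_r)\,dr=\int_{\mathbb{R}}C^\epsilon(x)\,\mathcal{L}^{B^i-B^j}_{t-s}(x)\,dx$ (occupation density of $B^i-B^j$) does \emph{not} converge to a multiple of $\mathcal{L}^{B^i-B^j}_{t-s}(0)$. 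Instead, rescaling ($x=\epsilon y$) and subtracting the polynomial data annihilated by $\rho^{(2p)}$, the surviving contribution comes from the fluctuation of $x\mapsto\mathcal{L}^{B^i-B^j}_{t-s}(x)$ near $0$; using the classical fact that this map is a semimartingale in $x$ with quadratic variation proportional to $\mathcal{L}^{B^i-B^j}_{t-s}$ (Ray--Knight / Perkins), one obtains that $\big(\int_0^{t-s}C^\epsilon(B^i_r-B^j_r)\,dr\big)_{i<j}$ converges in law, jointly with $(B^1,\dots,B^n)$, to a family $(Y_{ij})_{i<j}$ which, conditionally on the Brownian paths, consists of \emph{independent} centered Gaussians with $\mathrm{Var}(Y_{ij}\mid\text{paths})$ the appropriate multiple of $L_0^{B^i-B^j}(t-s)$; the multiple, and the identity $\sigma_p^2=\int(\rho^{(2p-1)})^2$, emerge from It\^o's isometry after $2p-1$ integrations by parts, so that the conditional Laplace transform equals $e^{\sigma_p^2 L_0^{B^i-B^j}(t-s)}$. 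Granting in addition a uniform exponential-integrability bound $\sup_\epsilon\mathbf{E}^{(x,x')}\big[e^{\lambda\int_0^{t-s}C^\epsilon(B^1_r-B^2_r)\,dr}\big]<\infty$ for each $\lambda$ (a bound on the ``derivative polymer'' partition function, via its Duhamel/chaos expansion using the cancellations $\int C^\epsilon=\int z\,C^\epsilon=0$), one may pass to the limit inside the exponential, take the conditional Gaussian Laplace transform, and use independence across pairs to obtain $\prod_{i<j}e^{\sigma_p^2 L_0^{B^i-B^j}(t-s)}=e^{\sigma_p^2\sum_{i<j}L_0^{B^i-B^j}(t-s)}$ --- exactly the $\sigma_p$-version of the right-hand side of Condition (3). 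Finally, the same uniform bound with $2n$ particles shows $\big(\int\phi\psi\,dZ^\epsilon_{s,t}\big)^n$ is uniformly integrable along $\epsilon_n$, so convergence in law upgrades to convergence of the $n$-th moments, establishing Condition (3) for $Z^0$.

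The main obstacle is the potential asymptotics just described: it genuinely requires the semimartingale structure of Brownian local time \emph{as a process in the spatial variable}, and in particular the asymptotic independence of the fine-scale fluctuations of $\mathcal{L}^{B^i-B^j}$ near $0$ over distinct pairs $\{i,j\}$ --- this conditional independence is what reproduces the product form of the limiting $n$-point function, and it is precisely the mechanism that makes it awkward to run the martingale problem or the chaos expansion directly (the individual potential $C^\epsilon$ is not small in any Kato-type sense, and only the signed cancellation $\int C^\epsilon=\int z\,C^\epsilon=0$ saves the day). The uniform exponential integrability, needed both here and for the Condition~(2) estimate, is more routine but still nontrivial because $\int_0^{t-s}|C^\epsilon(B^i_r-B^j_r)|\,dr$ can be of order $\epsilon^{-1/2}$ on rare events.
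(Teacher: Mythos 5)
Your overall architecture coincides with the paper's: tightness in $\mathcal M(\mathbb R^2)$ from moment bounds, passage to a subsequential limit over a countable dense index set, verification of Conditions (1)--(3) of Proposition \ref{mr} (with noise strength $\sigma_p$), and the handling of Conditions (1)--(2) via prelimit independence and the exact composition identity is essentially identical to what the paper does. Where you genuinely diverge is the moment asymptotics for Condition (3). The paper applies It\^o's formula to $\epsilon^{2p-\frac12}\Phi_\epsilon^{(2p-2)}(W^i-W^j)$ and then Girsanov, so that under a tilted diffusion measure the exponent becomes the \emph{positive} approximate-delta functional $\epsilon^{4p-1}\int_0^t(\Phi_\epsilon^{(2p-1)})^2(X^i-X^j)\,ds$ (mass $\sigma_p^2$) plus a triple-collision remainder; the limit is then a \emph{deterministic} functional of the paths (local time), identified by a Dynkin/Tanaka martingale argument, and exponential integrability comes for free from Novikov plus a Khas'minskii-type bound. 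You instead stay under the Wiener measure and invoke a second-order (stable/mixing) limit theorem for the zero-mean additive functional $\int_0^t C^\epsilon(B^i-B^j)\,dr$, obtaining a conditionally centered Gaussian limit with conditional variance proportional to $\sigma_p^2 L_0^{B^i-B^j}$, whose conditional Laplace transform reproduces $e^{\sigma_p^2 L_0}$. This mechanism is correct and is in fact the classical Papanicolaou--Stroock--Varadhan/Revuz--Yor picture; it also correctly isolates the ``extra noise in distribution but not pathwise'' phenomenon. What your route buys is conceptual transparency about why the convergence cannot be pathwise; what the paper's route buys is that all limits become deterministic functionals under the tilted measure, so no conditional CLT is ever needed.

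Two of your load-bearing steps are, however, asserted rather than proved, and they are exactly where the work lies. First, you need \emph{joint} stable convergence of the family $\big(\int_0^t C^\epsilon(B^i-B^j)\,dr\big)_{i<j}$ with conditional independence across pairs; appealing to Perkins' spatial semimartingale structure of local time does not by itself give this, and the natural proof is a time-martingale argument (write the functional, after the same integration by parts, as $-\epsilon^{2p-\frac12}\int_0^t\Phi_\epsilon^{(2p-1)}(B^i-B^j)\,d(B^i-B^j)+O(\epsilon^{1/2})$ and show the cross-brackets for pairs sharing an index vanish because three independent Brownian motions do not collide) --- i.e.\ precisely the triple-collision estimate that appears as the $\mathbf T^\epsilon$ term in the paper. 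Second, the uniform exponential integrability $\sup_\epsilon\mathbf E\big[e^{\lambda\int_0^t C^\epsilon(B^i-B^j)\,dr}\big]<\infty$ is the crux, and your proposed Duhamel/chaos expansion ``using the cancellations'' is the shakiest part: term-by-term bounds diverge like $\epsilon^{-m/2}$, and exploiting the cancellation inside iterated time integrals amounts to redoing the integration by parts anyway. The clean proof is the one the paper uses: after the It\^o rewriting the exponent is a martingale plus $O(\epsilon^{1/2})$, and Cauchy--Schwarz/Novikov reduces its exponential moments to those of $\epsilon^{4p-1}\int_0^t(\Phi_\epsilon^{(2p-1)})^2\,ds$, which are bounded uniformly by the Khas'minskii-type Lemma \ref{CAF}. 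With those two ingredients supplied (both available, and both most easily obtained by the It\^o--Girsanov manipulation), your argument closes; as written, they are gaps in an otherwise sound and genuinely alternative treatment of the key step.
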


We prove Theorem \ref{mr2} in Section \ref{s3}. We remark that the result of \cite{Hai24} was on a circle as opposed to ours on the full line, and moreover it was restricted to more regular initial data. However, that result was stronger in many ways. First of all, that result can consider \textit{non-Markovian approximations,} such as mollifications of $\eta$ in both space and time. But more crucially, that paper proves convergence in law with a much stronger topology of H\"older continuous functions as opposed to just measure-valued stochastic flows. This question of topology is very important here, because one of the most striking aspects of Proposition \ref{mr} is the fact that $Z_{s,t}$ is not assumed to be very regular at all, in particular there is no \textit{a priori} assumption that it is continuous or even function-valued. This makes tightness fairly easy to prove in our case. In contrast, obtaining tightness of \eqref{dshe} in a stronger topology seems very difficult to do directly, because of the fact that there is a derivative %on $z^\epsilon$ 
when we try to calculate moments of local H\"older norms by writing out the Duhamel formula. 

It is not possible to improve Theorem \ref{mr2} so that convergence occurs a.s. or in probability. Let us explain why this is the case, and why it is difficult to directly use established methods such as the martingale problem or the chaos expansion to solve this problem. A direct application of the martingale problem would require us to prove tightness in a space of continuous functions, which as pointed out above is difficult. But even then, obtaining nice expressions for the quadratic variations of the relevant martingales seems to be extremely tedious. On the other hand, the chaos expansion method of \cite{AKQ14a, CSZ17a} would be difficult because as pointed out in \cite{Hai24}, the ``correct driving noise" in \eqref{dshe} to see the limit is \textbf{not} $\eta^\epsilon$ itself, but rather it is $\epsilon^{2p-\frac12} ((\partial_xK \star \partial_y^p \eta^\epsilon)^2 -C_\epsilon)$ for some large constant $C_\epsilon \uparrow \infty,$ where $K$ is the heat kernel and $\star$ is space-time convolution. To form a chaos expansion with \textit{this} noise seems to be a gargantuan task, since it is non-Gaussian and has nonzero correlations in both space and time. 

The paper \cite{Hai24} proves the result by using the theory of BPHZ renormalization \cite{HS23, CH18, Bai} to automatically find canonical \textit{probabilistic lifts} of the driving noise (which consist of some large but finite number of homogeneous chaoses over $\eta^\epsilon$), together with the \textit{analytic theory} of regularity structures \cite{Hai14} to construct a continuous map from the lifted noise to the solution. Those structural properties of the solution map are extremely powerful, and the reason why the result of \cite{Hai24} is more robust. Hairer calls the scaling of Theorem \ref{mr2} an example of ``variance blowup" because if one naively tries to set up the regularity structure based on just $\eta^\epsilon$ without \textit{separately} keeping track of $\epsilon^{2p-\frac12} ((\partial_xK \star \partial_y^p \eta^\epsilon)^2 -C_\epsilon)$, then the stochastic part of the construction of some of the trees fails to converge in $L^2$ due to an unbounded variance (despite the \textit{scaling subcriticality} of the equation for $z^\epsilon$ in the sense of \cite{Hai14}), similar to what happens if one tries to construct iterated integrals with respect to fractional Brownian motion with $H\le 1/4$. These variance blowup issues cause \cite{Hai24} to have to slightly modify the BPHZ theory before applying it, which makes it somewhat technical compared to the more elementary approach here.

Next, we consider another application of Proposition \ref{mr}, which is unrelated to Theorem \ref{mr2} despite the apparent similarity. The following result was first conjectured in \cite{dom} and then proved in \cite{Par24}. 

\begin{thm}[Scaling limit of the SHE with advective noise]\label{mr3}
    Let $\eta$ be a space-time white noise on $\Bbb R_+\times\Bbb R$, and let $\eta^\epsilon:= \varphi_\epsilon *\eta$ where the convolution is in space only, and where $  \varphi_\epsilon (y):= \epsilon^{-1} \varphi(\epsilon^{-1}y)$ for some fixed smooth even mollifier $\varphi \in C_c^\infty(\Bbb R)$ such that $\|\varphi\|_{L^2(\Bbb R)}<1$ (equivalently $\sup_{x\in \Bbb R} \varphi*\varphi(x)<1$). 
    
    Denote by $\mathcal Z^\epsilon_{s,t}(x,y)$ the propagators given by the It\^o-Walsh solution of the SPDE \begin{equation}\label{1a}\tag{aSHE}\partial_t u(t,y) = \frac12 \partial_y^2 u(t,y) + u(t,y)\eta^\epsilon(t,y) + \epsilon^{1/2}\partial_y\big(u(t,y)\eta^\epsilon(t,y)\big),\;\;\;\;\;\;\;\; t\ge 0,y\in\Bbb R.
    \end{equation}
Then $\mathcal Z^\epsilon$ converges in law as $\epsilon\to 0$ to the family of propagators given by the It\^o-Walsh solution of $$\partial_t \mathcal U(t,y) = \tfrac12 \partial_y^2 \mathcal U(t,y) +  \gamma_{\mathrm{ext}} \mathcal U(t,y) \xi(t,y), \;\;\;\;\;\;\;\;\;\;\gamma_{\mathrm{ext}}^2 := \int_\mathbb R\frac{\varphi*\varphi(a)}{1- \varphi*\varphi(a)}da.$$ Here $\xi$ is another space-time white noise. The convergence is in the sense of finite-dimensional laws $(\mathcal Z^\epsilon_{s_1,t_1},...,\mathcal Z^\epsilon_{s_m,t_m}),$ where each coordinate has the topology of $\mathcal M(\Bbb R^2)$.
\end{thm}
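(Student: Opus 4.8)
The plan is to verify that every subsequential limit in law of $\mathcal Z^\epsilon$ satisfies Conditions (1)--(3) of Proposition \ref{mr} with noise coefficient $\gamma_{\mathrm{ext}}$, and then to invoke the uniqueness assertion of that proposition. First I would establish tightness of the countable family $\{\mathcal Z^\epsilon_{s,t}\}_{(s,t)\in\Bbb Q^2,\,s<t}$ in the metrizable product topology $\prod_{(s,t)}\mathcal M(\Bbb R^2)$; since a countable product of Polish spaces is tight as soon as each coordinate is, it suffices to make a single $\mathcal Z^\epsilon_{s,t}$ tight. This is immediate, because applying $\Bbb E$ to the mild formulation of \eqref{1a} annihilates the It\^o integrals, so $\Bbb E[\mathcal Z^\epsilon_{s,t}(x,y)]$ equals the heat kernel $p_{t-s}(y-x)$ for every $\epsilon$; hence $\Bbb E[(\mathcal Z^\epsilon_{s,t},f)]$ is bounded uniformly in $\epsilon$ for each $f\in C_c^\infty(\Bbb R^2)$, and a Markov (or Chebyshev, via the analogously bounded second moment) estimate gives tightness. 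Condition (1) then passes to any subsequential limit $Z$ for free: $\eta$ is white in time, so for each $\epsilon$ the propagators over disjoint time intervals are built from disjoint portions of the noise and are independent. For Condition (2) I would use that for each fixed $\epsilon>0$ the propagators of \eqref{1a} satisfy the exact composition identity $\mathcal Z^\epsilon_{s,u}(x,z)=\int_\Bbb R \mathcal Z^\epsilon_{t,u}(y,z)\,\mathcal Z^\epsilon_{s,t}(x,y)\,dy$ a.s.; mollifying in $y$ gives Condition (2) for $\mathcal Z^\epsilon$, and to transfer it to $Z$ one truncates the test function appearing there (which is not compactly supported in the pair $(y_1,y_2)$) using the uniform first-moment bound, controls the resulting error in $L^2$ with a uniform-in-$\epsilon$ second-moment estimate (a special case of the moment analysis below), and passes to the limit by Fatou for weak convergence.

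The substantial point is Condition (3): for each fixed $n$ and $\phi,\psi\in C_c^\infty(\Bbb R)$ I must show that $\Bbb E\big[\big(\int_{\Bbb R^2}\phi(x)\psi(y)\,\mathcal Z^\epsilon_{s,t}(dx,dy)\big)^n\big]$ converges as $\epsilon\to 0$ to the expression of Proposition \ref{mr} in which every local time $L_0^{B^i-B^j}(t-s)$ is replaced by $\gamma_{\mathrm{ext}}^2\,L_0^{B^i-B^j}(t-s)$ --- which is exactly the $n$-point function of \eqref{she} with noise coefficient $\gamma_{\mathrm{ext}}$. I would expand this moment via the Duhamel/Picard iteration of \eqref{1a}: integrating the derivative in the term $\epsilon^{1/2}\partial_y(u\eta^\epsilon)$ onto the heat kernel shows that the iteration is driven by the \emph{effective kernel} $K^\epsilon_r:=p_r+\epsilon^{1/2}p_r'$ paired with the noise $\eta^\epsilon$, whose spatial covariance is $\rho_\epsilon:=\varphi_\epsilon*\varphi_\epsilon$. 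Taking the $n$-th moment produces a sum over pairings of noise insertions among the $n$ ``replica'' walks; after passing to the relative coordinate of each interacting pair (the centre-of-mass motion being a standard macroscopic diffusion that does not affect the coupling), the free evolution of that relative coordinate over time $r$ is governed by $K^\epsilon_r*\widetilde K^\epsilon_r = p_{2r}-\epsilon\,p_{2r}''$ (with $\widetilde K^\epsilon_r$ the reflection of $K^\epsilon_r$) and is damped by the potential $\rho_\epsilon$. Since $\rho_\epsilon$ lives at spatial scale $\epsilon$, the right move is to rescale the relative coordinate by $\epsilon$ and the intra-interaction times by $\epsilon^2$.

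At that microscopic scale $\rho_\epsilon$ becomes $\epsilon^{-1}\varphi*\varphi(\cdot)$ while $p_{2r}-\epsilon p_{2r}''$ becomes $\epsilon^{-1}p_{2\tau}(\cdot)-\epsilon^{-2}p_{2\tau}''(\cdot)$, so the \emph{advective correction} $-\epsilon^{-2}p_{2\tau}''$ dominates. Counting powers of $\epsilon$, an additional microscopic interaction inside a ``clump'' of rapid near-diagonal interactions then costs a factor $O(1)$ rather than the $O(\epsilon)$ it would cost for plain \eqref{she}; and using $\int_0^\infty(-p_{2\tau}'')\,d\tau=\delta_0$, each such extra interaction, while leaving the pair at rescaled relative position $b$, simply multiplies by $\varphi*\varphi(b)$. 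A clump of $k\ge 1$ interactions therefore contributes $(\varphi*\varphi(b))^k$, and resumming the clump and integrating in $b$ converts each macroscopic interaction event into the effective coupling $\sum_{k\ge 1}\int_\Bbb R(\varphi*\varphi(b))^k\,db=\int_\Bbb R\frac{\varphi*\varphi(b)}{1-\varphi*\varphi(b)}\,db=\gamma_{\mathrm{ext}}^2$; the hypothesis $\|\varphi\|_{L^2}<1$ (equivalently $\sup_x\varphi*\varphi(x)<1$) is precisely what makes this geometric series converge, and without the advective term the clumps of size $\ge 2$ would be invisible and the limit would instead be \eqref{she} with coefficient $1$. Gluing the clumps back together along the macroscopic free propagation reproduces the Duhamel expansion of the delta-interaction Feynman--Kac formula with coupling $\gamma_{\mathrm{ext}}^2$, i.e.\ the claimed limit of the $n$-point functions.

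The main obstacle is making this count rigorous uniformly in $\epsilon$: the $n$-th moments of \eqref{1a} grow super-exponentially in $n$, so the sum over pairings and over clump structures must be dominated by something summable, which I would do by exploiting the Chapman--Kolmogorov (hierarchical) structure of the many-body problem together with the bound $\sup_x\varphi*\varphi(x)<1$. One must also (i) prove uniform-in-$\epsilon$ bounds $\sup_\epsilon\Bbb E\big[\big(\int\phi\psi\,\mathcal Z^\epsilon_{s,t}\big)^n\big]<\infty$ for, say, $n\le 20$, which yields the uniform integrability needed to pass moments to the subsequential limit and supplies the second-moment estimate used for Condition (2); (ii) partition the simplex of interaction times into macroscopically separated blocks with intra-block excursions of duration $O(\epsilon^2)$, and show that configurations not of this clump type, and partial clumps at block boundaries, are negligible; and (iii) supplement the probabilistic Feynman--Kac picture with direct heat-kernel estimates, since $p_{2r}-\epsilon p_{2r}''$ is not a probability kernel. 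Once Conditions (1)--(3) hold for every subsequential limit, Proposition \ref{mr} identifies it as the family of propagators of \eqref{she} with coefficient $\gamma_{\mathrm{ext}}$, whose law is thereby unique, so $\mathcal Z^\epsilon$ converges in law in the product topology --- in particular in all finite-dimensional marginals --- which is Theorem \ref{mr3}. I would also record, as promised in the introduction, why the direct routes struggle here: a chaos expansion would have to be carried out in the non-Gaussian, space-time-correlated effective noise that only emerges after resumming the clumps, while the martingale-problem route would first require tightness in a space of continuous functions and then rather unwieldy quadratic-variation identities, whereas the measure-valued moment method bypasses both.
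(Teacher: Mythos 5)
Your overall architecture is the same as the paper's: prove tightness in the weak measure topology, check Conditions (1) and (2) of Proposition \ref{mr} using independence of the driving noise over disjoint time intervals and the prelimit propagator identity (the paper's \eqref{propa1}), and reduce everything to convergence of the $n$-point functions, after which the uniqueness in Proposition \ref{mr} identifies the limit. Those peripheral steps are fine (the paper handles Condition (2) the same way, via a uniform $L^2$ estimate that follows from the moment convergence). The difference, and the problem, is the central step, Condition (3). You propose a Duhamel/diagrammatic expansion in the noise $\eta^\epsilon$ with a ``clump'' resummation of near-diagonal interactions, producing the geometric series $\sum_{k\ge 1}\int(\varphi*\varphi(b))^k\,db=\gamma_{\mathrm{ext}}^2$. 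The constant comes out right, and this is essentially the Taylor-expansion heuristic sketched (for $n=2$ only, and without proof) in the reference \cite{dom}; but as written it is a power-counting narrative, not a proof. The hard content is exactly what you defer to items (i)--(iii): the effective relative kernel $p_{2r}-\epsilon p_{2r}''$ is signed and not a transition density, the center-of-mass/relative factorization fails at the perturbed vertices, clumps involving three or more replicas and clumps straddling block boundaries must be shown negligible, and --- most seriously --- the number of \emph{macroscopic} interaction events is unbounded (this is what builds the exponential of local time), so the clump resummation has to be performed inside a series of unbounded order with uniform-in-$\epsilon$ renewal-type estimates for every $n\le 19$. This is precisely the ``$L^2$-mass escaping to high-order chaos'' difficulty the introduction warns about for this model; asserting that the sum over pairings and clump structures ``must be dominated by something summable'' does not supply the estimate. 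So there is a genuine gap: the step that actually produces $\gamma_{\mathrm{ext}}^2$ is not established.

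For contrast, the paper never expands in the noise. Lemma \ref{sing} rewrites the $n$-th moment exactly as a functional $\mathbf E^{(x_1,\dots,x_n)}_{\mathrm{Sing}(\epsilon)}\big[e^{\sum_{i<j}\int_0^t\Phi_\epsilon(X^i_s-X^j_s)ds}\prod_j\phi(X^j_t)\big]$ of an interacting diffusion, obtained from the random-environment representation of \eqref{1a} (the SDE $dX=\epsilon^{1/2}\eta^\epsilon dt+\theta^{1/2}dW$, its forward equation, and an exponential change of measure). All uniform bounds then come softly from a Girsanov comparison to the centered diffusion, the diffusive rescaling of Lemma \ref{rescale}, and the heat-kernel estimates of Lemma \ref{CAF2}; and the constant $\gamma_{\mathrm{ext}}^2$ appears not from a resummed series but from the ODE $G_{\mathrm{diff}}\Psi=\varphi*\varphi$ with $G_{\mathrm{diff}}=(1-\varphi*\varphi)\partial_x^2$, since $\lim_{y\to\infty}y^{-1}\Psi(y)=\int\frac{\varphi*\varphi}{1-\varphi*\varphi}$, combined with Dynkin's formula and Tanaka's formula to identify the limiting additive functional as $\gamma_{\mathrm{ext}}^2L_0^{W^i-W^j}$ (Proposition \ref{lim2}). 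If you want to keep your route, the diagrammatic estimates would have to be developed from scratch and uniformly in the chaos order; the paper's diffusion/ODE argument is the device that makes the whole proof only a few pages.
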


We prove Theorem \ref{mr3} in Section \ref{s4}. The scaling of the equation \eqref{1a} exhibits even more erratic properties than the one in Theorem \ref{mr2}, as we will explain just below. Nonetheless, it is physically relevant due to its appearance in problems related to the topic of KPZ universality in models of random walk in random environments, see e.g. \cite{ldt, ldt2, bld, bc, gu, mark, dom, DDP23, DDP23+, Par24} for more discussion. In much greater generality it was first proved in \cite{Par24} that as $\epsilon\to 0$, \eqref{1a} converges in law to the solution of \eqref{she}, in the stronger topology of $C([0,T],\mathcal M(\Bbb R))$. See Section 6.3 of that paper for the particular case of \eqref{1a}. That result did not consider the full family of propagators, but rather just $s=x=0$. Combined with Theorem \ref{mr3}, that result now extends to the full family of propagators.

As with Theorem \ref{mr2}, the convergence of Theorem \ref{mr3} does not occur a.s. or in probability, and in fact extra noise will be generated in the limit that is independent of the original driving noise $\eta$ (as is suggested by the fact that $\gamma_{\mathrm{ext}}> 1$). This was explained in detail in \cite[Theorem 1.4]{DDP23+}, which studied a discrete version of this model \eqref{1a}. A related pathological phenomenon here is that (unlike Theorem \ref{mr2}) convergence of $\mathcal Z^\epsilon$ will \textbf{not} occur in a space of continuous functions such as $C(\Bbb R^4_\uparrow)$. This is not just a technicality. Indeed, if one assumes that such a convergence holds in the space of continuous functions, then one can derive a contradiction, see the discussion in \cite{DDP23+} for a discrete analogue.

This suggests that unlike Theorem \ref{mr2}, the scaling limit problem in Theorem \ref{mr3} cannot even be studied using regularity structures (at least not directly). Indeed, one can check that the equation of Theorem \ref{mr3} is not \textit{scaling subcritical} in the sense of \cite{Hai14}. To see that, one can %do some ``regularity counting" to see that the regularity actually remains fixed under successive iterations of the iterative rule for the Duhamel formulation of the equation. The noise $\eta^\epsilon$ has parabolic H\"older regularity of $-3/2$ as $\epsilon\to 0$, thus \textit{at best} $\epsilon^{1/2}\partial_y(u\cdot \eta^\epsilon)$ would have regularity $-2$ thanks to the multiplying factor $\epsilon^{1/2}$, notice that if the solution %
consider the ``naive" regularity structure where the model space $T$ is generated by two abstract noise symbols $\Xi_1,\Xi_2$ and an integration operator $\mathcal I$ of order 2 (representing convolution with the heat kernel in space and time). The rule for $T$ is that $1 \in T$, and %$T=A\cup B$ where $\Xi_1,\Xi_2\in A$ and $\tau\in A$ implies that $\mathcal I\tau,\mathcal I'\tau\in B$, and $\tau \in B$ implies $\tau \Xi_1,\tau\Xi_2\in B$. 
$\tau \in T$ implies $\mathcal I(\tau \Xi_1) + \mathcal I'(\tau \Xi_2) \in T$. Here $\Xi_1$ stands for $\eta^\epsilon$ and $\Xi_2$ stands for $\epsilon^{1/2} \eta^\epsilon$. One wants to solve a fixed point problem of the form $x = \mathcal I(x\Xi_1) + \mathcal I'(x\Xi_2)$, where $\Xi_1$ would have order $-3/2$ while $\Xi_2$ would have order $-1$ thanks to the multiplying factor $\epsilon^{1/2}$. Now the reason that local subcriticality fails is that orders of products are the sum of the orders, thus $\Xi_2 \mathcal I'(\Xi_2)$ has order $-1$ again, thus $\Xi_2 \mathcal I'(\Xi_2 \mathcal I'(\Xi_2))$ has order $-1$ once again, and so on. Thus it is a \textit{scaling critical model}, since the regularity remains bounded below but fails to improve under successive iterations of the rules for this regularity structure. Thus Theorem \ref{mr3} is not a ``variance blowup" problem in the same sense as described in \cite{Hai24}, because the problem occurs at the level of the \textit{analytic theory} and not just the \textit{probabilistic lifts}. Still, it exhibits a similar flavor in the sense that putting the vanishing factor $\epsilon^{1/2}$ on $\eta^\epsilon$ appearing in the advective term leads to a nontrivial limit in distribution but not pathwise. 

%Possibly, the ``correct noise" should be $\Xi_1 + \Xi_1 \sum_{k=0}^\infty \mathcal I'\Xi_2^{\circ k}$ where $I'\Xi_2^{\circ k}$ denotes the $k$-fold iteration of multiplying $\Xi_2$ and then applying $\mathcal I'$ repeatedly, which does not lie in any fixed chaos as was explained to be the case for Theorem \ref{mr2}.

%One could also try to take the logarithm of \eqref{1} which would yield the equation $$\partial_t h = \frac12\partial_x^2 h + \frac12 (\partial_x h)^2 + \xi^\epsilon + \epsilon^{1/2} \partial_x h \cdot \xi^\epsilon +\epsilon^{1/2} \partial_x\xi^\epsilon. $$Now one could try to set up a regularity structure for this equation as well, and it will fail the local subcriticality for all of the same reasons. Set up the resularity structure generated by two abstract noises $\Xi_1,\Xi_2$, representing the same noises as above, then impose that $\tau\in T$ implies $\partial \tau \in T$ and $(\mathcal I'\tau)^2\in T$ and $\mathcal I(\partial \tau \cdot \Xi_2)\in T$. Again $\Xi_2$ has order -1, thus $\mathcal I'\partial\Xi_2$ has order -1, so that its square has order -2. Thus $\mathcal I\big ((\mathcal I'\partial\Xi_2)^2 \big)$ has order 0, thus differentiating and multiplying by $\Xi_2$ gives a symbol of order -2, then in turn applying $\mathcal I$ gives something of order 0, then differentiating and multiplying by $\Xi_2$ once again gives a symbol of order -2, and so on forever.

The chaos expansion also fails for the equation of Theorem \ref{mr3}, due to some of the $L^2$-mass of $\mathcal Z^\epsilon$ escaping into the tails of the $\eta^\epsilon$-chaos expansion (very high-order chaos) as $\epsilon\to 0$, see the discussion in the introduction of \cite{DDP23+}. In \eqref{1a}, the ``correct prelimiting noise" does not live in any fixed-order chaos as was the case for Theorem \ref{mr2}. \textit{Conjecturally} it should be $\eta^\epsilon \cdot F^\epsilon$ where $F^\epsilon$ is the space-time stationary solution to $\partial_t v  = \frac12 \partial_y^2 v + \epsilon^{1/2} \partial_y(v\eta^\epsilon)$. More intuitively in terms of the regularity structure discussion, $F=\sum_{k=0}^\infty(\mathcal I'\Xi_2)^{\circ k}$ where $(\mathcal I'\Xi_2)^{\circ k}$ denotes the $k$-fold iteration of the operation $\tau\mapsto \mathcal I'(\tau\Xi_2)$. %, and each term of the series exhibits a sort of ``variance blowup" phenomenon. 
Ultimately in \cite{Par24} and related problems \cite{DDP23,DDP23+}, the method of \textit{martingale problem} was successfully used for this type of model, but the resultant effort was long and technical. Here the result is weaker, but nonetheless we can successfully identify the limit points \textit{uniquely} with a calculation of only five pages, thus illustrating the power of the weaker moment-based formulation above.% of Proposition \ref{mr}.

Having discussed the benefits, let us discuss some of the major limitations of Proposition \ref{mr} in (1+1)-dimensional KPZ-related problems. First, there are many interesting systems (for example the WASEP of \cite{BG97}) in which calculating the moments is not so easy, where the classical martingale problem would actually be simpler than the moment method applied here. Second, one always suspects tightness in a better topology than just finite-dimensional laws in $\mathcal M(\Bbb R^2)$, but the moment-based approach of Proposition \ref{mr} by itself will never be able to show the required tightness in any better topology, thus illustrating another limitation (although %, if the tightness of the four-parameter process can be shown in $C(\Bbb R^4_\uparrow)$ using some other method, 
the moment method \textit{does} identify the limit point uniquely if the tightness has already been shown in a stronger topology such as $C(\Bbb R^4_\uparrow)$). 
Third, the method of Proposition \ref{mr} is limited to systems that are \textit{linear as a function of their initial data}, see the discussion in Subsection 2.1. Fourth, Proposition \ref{mr} is unable to calculate\textbf{ joint} limits in distribution. For example in the setting of Theorem \ref{mr2}, the method of \cite{Hai24} actually shows that $(\eta^\epsilon,Z^\epsilon)$ converge in law jointly to two independent processes with the correct marginals. In the setting of Theorem \ref{mr3}, the result of \cite[Theorem 1.4]{DDP23+} shows how to calculate the \textit{joint} limit in distribution of $(\eta^\epsilon,\mathcal Z^\epsilon)$ which has a nontrivial answer. Proposition \ref{mr} cannot do this.
\\
\\
\textbf{Outline.} In Section \ref{s2}, we give a short proof of Proposition \ref{mr}. In Section \ref{s3}, we prove Theorem \ref{mr2}. In Section \ref{s4}, we prove Theorem \ref{mr3}.
\\
\\
\textbf{Acknowledgements.} I thank Yu Gu for suggesting the problem of trying to recover the KPZ result from \cite{Hai24} using a different approach, and for interesting discussions about this problem. I thank Zuodi Xie for finding many typos and giving useful feedback on the proofs from a preliminary draft of this paper. I thank the two anonymous referees for very helpful feedback,
and for finding typos. I acknowledge support by the NSF MSPRF (DMS-2401884).

\section{Proof of Proposition \ref{mr}}\label{s2}

\subsection{Elementary discussion} Before proving the proposition, we first explain the main idea by considering a toy model given by the ``geometric Brownian flow," which is the same example given in \cite{Ts24}. 
\begin{prop}\label{gbf}
Consider a two-variable process $(G_{s,t})_{-\infty <s\le t<\infty}$ with $s,t\in \Bbb Q$, defined on some complete probability space, and satisfying 
\begin{enumerate}
        \item $G_{s_j,t_j}$ are independent if $(s_j,t_j)$ are disjoint intervals.

        \item $G_{s,t}G_{t,u}= G_{s,u}$ whenever $s<t<u$.

        \item For $n=1,2,3,4,5,6$, we have that $\Bbb E\big[ G_{s,t}^n \big] = e^{\frac{n^2}2(t-s)}.$
\end{enumerate}
These three conditions uniquely determine the finite-dimensional distributions of the process $(G_{s,t})_{s<t}$ as being the same as those of $(e^{B(t)-B(s)})_{s<t}$  for a standard Brownian motion $B$, and moreover $B$ can be defined on the same space so that the relation holds pathwise on $\Bbb Q^2$. 
\end{prop}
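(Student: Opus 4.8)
The plan is to distill from the three hypotheses enough structure to run a classical stochastic‑calculus argument: I want to show that the suitably normalized flow is a continuous, strictly positive martingale whose bracket can be computed explicitly, and then identify it by Lévy's characterization. As a first, purely bookkeeping step I would note that, using only (1) and (2), it suffices to determine the law of each individual variable $G_{s,t}$ for rational $s<t$. Indeed, given finitely many rational pairs $(s_j,t_j)$, I sort all endpoints into $r_0<r_1<\cdots<r_K$; by (2) each $G_{s_j,t_j}$ is a product of the elementary increments $G_{r_i,r_{i+1}}$ with $[r_i,r_{i+1})\subseteq[s_j,t_j)$, and by (1) those elementary increments are jointly independent, so the joint law of $(G_{s_j,t_j})_j$ is a fixed continuous function of the marginals $\mathrm{Law}(G_{r_i,r_{i+1}})$. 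Hence all finite‑dimensional distributions agree with those of the geometric Brownian flow as soon as every $\mathrm{Law}(G_{s,t})$ equals $\mathrm{LogNormal}(0,t-s)$; the same observation will yield the pathwise statement at the end.

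The next two steps supply the regularity that the hypotheses do not give a priori. For positivity I would fix rational $s<t$, subdivide $[s,t]$ into $N$ equal rational‑endpoint subintervals, and write $G_{s,t}=\prod_{i=1}^{N}\xi_i$ with the $\xi_i$ independent by (1) and all carrying the moments in (3). A short Taylor expansion shows $\mathbb E[(\xi_i-1)^4]$ vanishes to second order in the mesh, hence is $O(N^{-2})$, so $\mathbb P(\xi_i=0)\le\mathbb P(|\xi_i-1|\ge 1)\le\mathbb E[(\xi_i-1)^4]=O(N^{-2})$ and a union bound gives $\mathbb P(G_{s,t}=0)\le O(N^{-1})$; since the left side does not depend on $N$, letting $N\to\infty$ forces $\mathbb P(G_{s,t}=0)=0$, and intersecting over the countably many rational pairs makes the whole flow positive. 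For continuity, fixing rational $s$ and writing $G_{s,t}-G_{s,t'}=G_{s,t'}(G_{t',t}-1)$ for $s\le t'<t$, independence and (3) give $\mathbb E[(G_{s,t}-G_{s,t'})^4]=\mathbb E[G_{s,t'}^4]\,\mathbb E[(G_{t',t}-1)^4]\le C_K|t-t'|^2$ on compacts (the fourth moment of $G_{t',t}-1$ again vanishing to second order), so Kolmogorov–Čentsov produces a continuous modification $\tilde G_{s,\cdot}$, which is strictly positive on $(s,\infty)$ because a nonnegative continuous process agreeing with the strictly positive $G_{s,\cdot}$ at the rationals cannot vanish without being absorbed at $0$, and which satisfies $\tilde G_{s,s}=1$ since (3) with $n=2$ gives $G_{s,t}\to 1$ in $L^2$ as $t\downarrow s$.

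With positivity and continuity available, I would fix rational $s$, set $\mathcal F^{[s]}_t:=\sigma(G_{a,b}:s\le a\le b\le t)$, and use $G_{s,u}=G_{s,t}G_{t,u}$, the independence of $G_{t,u}$ from $\mathcal F^{[s]}_t$, and $\mathbb E[G_{t,u}^n]=e^{n^2(u-t)/2}$ for $n=1,2$ to check that $N_t:=\tilde G_{s,t}\,e^{-(t-s)/2}$ and $N_t^2 e^{-(t-s)}=\tilde G_{s,t}^2 e^{-2(t-s)}$ are continuous martingales on $[s,\infty)$; hence $\langle N\rangle_t=\int_s^t N_r^2\,dr$. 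Since $N$ is a continuous strictly positive martingale with $N_s=1$, the process $W_t:=\int_s^t N_r^{-1}\,dN_r$ has $\langle W\rangle_t=t-s$, so by Lévy it is a Brownian motion started at time $s$, and Itô's formula for $\log N$ gives $N_t=\exp(W_t-(t-s)/2)$, i.e. $\tilde G_{s,t}=e^{W_t}$ with $W_t\sim\mathcal N(0,t-s)$. Thus $\mathrm{Law}(G_{s,t})=\mathrm{Law}(e^{B(t)-B(s)})$ for all rational $s<t$, and by the reduction step all finite‑dimensional distributions are those of the geometric Brownian flow. For the pathwise claim I would then define $B$ on $\mathbb Q$ by $B(q):=\log G_{0,q}$ for $q\ge 0$ and $B(q):=-\log G_{q,0}$ for $q<0$; condition (2) yields $B(t)-B(s)=\log G_{s,t}$ for all rational $s<t$, condition (1) with the marginal computation shows $B|_{\mathbb Q}$ has independent $\mathcal N(0,\Delta t)$‑increments, so $B|_{\mathbb Q}$ is a.s. uniformly continuous on bounded sets and extends to a two‑sided Brownian motion $B$ on $\mathbb R$ with $G_{s,t}=e^{B(t)-B(s)}$ holding pathwise on $\mathbb Q^2$.

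The only genuinely non‑routine points I expect are positivity and continuity: the "subdivide, union‑bound, let $N\to\infty$" device for positivity is exactly what breaks down if one knows only the first two moments, which is the reason several moments are required in (3), and the fourth‑moment bound on increments is what powers the Kolmogorov argument. Once those structural facts are in place the identification in the last step is classical. This is a scaled‑down version of what will happen in the proof of Proposition \ref{mr}, where the corresponding task is to bootstrap Conditions (1)–(3) into the martingale problem for \eqref{she} and to reconstruct the driving noise.
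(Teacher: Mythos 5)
Your proposal is correct, and although it shares the paper's overall skeleton (normalize, build a continuous modification via Kolmogorov--Chentsov, compute the bracket of the resulting martingale, identify the law, recover $B$ on the same space), the key technical steps are genuinely different. The paper identifies the bracket by matching the discrete quadratic-variation sums in \eqref{qvlim} against those of $e^{B(t)-t/2}$, using that the first four moments of every multiplicative increment agree together with an $L^p$ ($p>4$) uniform-integrability upgrade, and it obtains strict positivity only \emph{a posteriori} from the identified law before defining $B$ as $\int_0^t \tilde X_r^{-1}d\tilde X_r$; you instead (i) prove positivity directly from the moments by subdivision, Chebyshev applied to $\mathbb E[(\xi_i-1)^4]=O(N^{-2})$, and a union bound, (ii) read off $d\langle N\rangle_t=N_t^2\,dt$ from the fact that $N_t$ and $N_t^2e^{-(t-s)}$ are both continuous martingales (the finite-variation part in the integration-by-parts decomposition must vanish), which needs only second moments, (iii) conclude via L\'evy's characterization of $\int N^{-1}dN$ and It\^o for $\log N$ rather than citing martingale-problem uniqueness, and (iv) construct $B$ elementarily as $\log G_{0,\cdot}$. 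A nice by-product is that your argument only consumes moments of order $n\le 4$ (fourth moments for positivity and for the increment bound $\mathbb E[(G_{s,t}-G_{s,t'})^4]\le C_K|t-t'|^2$ feeding Kolmogorov--Chentsov, second moments for the two martingale identities) while staying entirely within continuous processes, which is slightly sharper than what Remark \ref{wm} suggests is the price of dropping from six to four moments; what the paper's route buys is that its quadratic-variation-matching device is exactly the step that scales up to Proposition \ref{mr}. Two points to tighten, neither a real gap: since nonnegativity of $G$ is not a hypothesis, you need strict positivity (not merely $G\neq 0$) at rational times, so state the subdivision bound for $\mathbb P(\xi_i\le 0)$ --- the same Chebyshev estimate covers it; and your claim that $\tilde G_{s,\cdot}$ cannot vanish at an irrational time is not a consequence of continuity plus positivity at rationals alone (a continuous path can vanish off $\mathbb Q$), it requires the absorption-at-zero property of nonnegative continuous (super)martingales, so it should be invoked only after the martingale property of $N$ has been established.
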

\begin{proof}
\cite{Ts24} proves this using the Lindeberg exchange principle, so let us apply a different method. Define the process $X_t:= e^{-\frac{t}2} G_{0,t}.$ First we note that $X$ admits a continuous modification, meaning that there exists a $C[0,T]$-valued random variable $\tilde X$ defined on the same probability space such that $\tilde X_t=X_t$ almost surely for all $t\in\Bbb Q\cap [0,T]$. This is because one has the bound $$\Bbb E[ |G_{0,t} - G_{0,s}|^3]^{1/3} = \Bbb E[ |G_{0,s}|^3 |G_{s,t}-1|^3 ]^{1/3} \le \Bbb E[ |G_{0,s}|^6]^{1/6} \Bbb E[|G_{s,t} -1|^{6}]^{1/6} \leq C_T|t-s|^{1/2}, $$ uniformly over $s,t\in [0,T]\cap \Bbb Q$ for any $T>0$. Now note that $\tilde X_t$ is a martingale using (1), (2), and (3) with $n=1$. For $t>0$, we claim that for any deterministic sequence of partitions of $[0,t]$ with mesh tending to 0 and having rational endpoints, one has
\begin{equation}\label{qvlim}\lim_{N\to\infty} \Bbb E\bigg[ \bigg( \sum_{i=1}^N ( X_{t_{i+1}}- X_{t_i})^2 - \sum_{i=1}^N X_{t_i}^2 (t_{i+1}-t_i)\bigg)^2\bigg] = 0. 
\end{equation}
To prove this, note that the claim is true\footnote{For any continuous $L^2$-martingale, the partition approximations of the quadratic variation process always converge \textit{in probability} to the quadratic variation process \cite[Theorem (1.8) in Chapter IV]{RY99}. Under the stronger assumption that the martingale is in $L^p$ for some $p>4$, this can be automatically upgraded to $L^2$ convergence by a uniform integrability argument, using e.g. the lower bound of BDG and then Doob's inequality. This will be used later as well.} if $X$ was replaced by the process $e^{B(t)-\frac{t}2}$. But since the first four moments of \textit{every multiplicative increment} of the latter process match those of $X$, the claim is true for $X$ as well. In particular the quadratic variation is given by $\langle \tilde X\rangle_t = \int_0^t \tilde X_s^2 ds.$ But if $\tilde X_t$ and $\tilde X_t^2 - \int_0^t \tilde X_s^2 ds$ are both \textit{continuous} martingales, this uniquely identifies $\tilde X$ in law, as the solution of the SDE given by $d\tilde X= \tilde XdB$ for a standard Brownian motion $B,$ see \cite[Proposition (2.4) in Chapter VII]{RY99} or the original treatise on martingale problems \cite{SV}. Note that $\tilde X$ is then strictly positive, and thus the Brownian motion $B$ can be defined on the same probability space as $X$, via the stochastic integral $B(t) = \int_0^t \tilde X_s^{-1} d\tilde X_s$. The initial data for the SDE is 1 since $\Bbb E[ (X_t-1)^2 ]\to 0$ as $t\to 0.$ This easily yields $\tilde X_t = e^{B(t)-\frac{t}2}$, so that $G_{s,t} = e^{B(t)-B(s)}$ almost surely for all $s,t\in\Bbb Q$, completing the proof.
\end{proof}

The proof can be modified slightly so that the assumption of six moments can be relaxed to four moments in Item (3), but then one needs to work with discontinuous processes, see Remark \ref{wm}. We also remark that in order to be able to use this method, it is important for the object of interest to be a \textit{linear function of its initial data}. This type of moment-based approach would never work for more complicated nonlinear multiplicative forcing terms, such as SDEs of the form $dX=\sigma(X)dB$ with nonlinear $\sigma$. This is apparent from the fact that $X_{t_i}^2$ would get replaced by some non-quadratic function in \eqref{qvlim} if one were to try to use this type of moment method for a more complex SDE, and then the moments would certainly \textit{not} suffice unless $\sigma$ is polynomial in $\sqrt x$. But more importantly, unless $\sigma(x) = ax$ for some $a\in \Bbb R$, the flow of maps associated to such an SDE would not respect \textit{multiplication} in the first place, as in Item (2) above. Very similar limitations apply in the SPDE case of Proposition \ref{mr}, for instance this method is not applicable if one wishes to recover results such as the one in \cite{BR} since the limit there is an additive SPDE.

\subsection{Proof of the result} With this intuition established, we now proceed to prove Proposition \ref{mr}, using a somewhat different method than the original work \cite{Ts24}. We always denote by $(Z_{s,t},f):= \int_{\Bbb R^2} f(x,y) Z_{s,t}(dx,dy)$. %$(\phi,\psi)$ the pairing in $L^2(\Bbb R)$ or in $L^2(\Bbb R^2)$, which will be clear from the context. 
We also denote $(\phi,\psi)_{L^2(\Bbb R)}:= \int_{\Bbb R} \phi\psi$. The proof of Proposition \ref{mr} will follow immediately from the results of Proposition \ref{mr0}, Corollary \ref{solves}, and Proposition \ref{jnoise} below. %In the setting of Proposition \ref{mr}, we will always assume that the $Z_{s,t}$ are defined on some complete probability space $(\Omega,\mathcal F,\Bbb P$).

\begin{prop}\label{mr0}
    In the setting of Proposition \ref{mr}, there exists a random four-parameter function $\tilde Z_{s,t}(x,y)$ with $-\infty<s<t<\infty$ and $x,y\in \Bbb R$, H\"older continuous on compact sets of $\{ (s,t,x,y): s<t\}$ and satisfying the following properties: 
    \begin{enumerate}\item $(Z_{s,t},f) = \int_{\Bbb R^2} \tilde Z_{s,t}(x,y)f(x,y)dxdy$ almost surely for all $s,t\in\Bbb Q \;(s<t)$ and all $f\in C_c^\infty(\Bbb R^2)$. In particular, $\tilde Z$ is defined on the same probability space. \item If $(s_j,t_j)$ are disjoint intervals then $\tilde Z_{s_j,t_j} $ are independent.
    \item If $s<t<u$ and $x,z\in\Bbb R$, then we have the a.s. relation 
    \begin{equation}\label{pmod}\tilde Z_{s,u}(x,z) = \int_{\Bbb R} \tilde Z_{s,t}(x,y)\tilde Z_{t,u}(y,z)dy.
    \end{equation}
    \end{enumerate}
\end{prop}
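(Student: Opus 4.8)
The plan is to realize $\tilde Z$ as a Kolmogorov-type jointly continuous modification of the mollified densities of $Z$, with all quantitative input extracted from Condition (3). Fix an even mollifier $\rho\in C_c^\infty(\Bbb R)$ with $\int\rho=1$, write $\rho_\delta(\cdot)=\delta^{-1}\rho(\delta^{-1}\cdot)$, and for $s,t\in\Bbb Q$, $s<t$, set
$$W^\delta_{s,t}(x,y):=\big(Z_{s,t},\,\rho_\delta(x-\cdot)\otimes\rho_\delta(y-\cdot)\big).$$
First I would upgrade Condition (3) to a joint-moment identity. Since it holds for $(Z_{s,t},\phi\otimes\psi)^n$ for \emph{all} $\phi,\psi\in C_c^\infty(\Bbb R)$, replacing $\phi$ by $\sum_i a_i\phi_i$ and $\psi$ by $\sum_j b_j\psi_j$ (note $(\sum_i a_i\phi_i)\otimes(\sum_j b_j\psi_j)$ is again a product test function) and matching coefficients of the monomials $\prod_k a_{i_k}b_{j_k}$ gives, for arbitrary $\phi_1,\psi_1,\dots,\phi_n,\psi_n\in C_c^\infty(\Bbb R)$,
$$\E\Big[\prod_{k=1}^n (Z_{s,t},\phi_k\otimes\psi_k)\Big]=\int_{\Bbb R^n}\prod_k\phi_k(x_k)\,\mE_{\mathrm{BM}^{\otimes n}}^{(x_1,\dots,x_n)}\Big[\prod_k\psi_k(B^k_{t-s})\,e^{\sum_{i<j}L_0^{B^i-B^j}(t-s)}\Big]\,dx,$$
and approximating a general $F\in C_c^\infty(\Bbb R^2)$ by finite-rank functions (dominating $|(Z_{s,t},F)|$ by a nonnegative $(Z_{s,t},\phi\otimes\psi)$) extends this to all joint moments $\E[\prod_k(Z_{s,t},F_k)]$. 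The right-hand side is exactly the $n$-point correlation function $\E[\prod_k\mathcal U_{s,t}(x_k,y_k)]$ of the It\^o--Walsh propagator of \eqref{she}; crucially this does \emph{not} let us conclude that $Z_{s,t}$ has the law of the mSHE propagator (the moments grow far too fast for that), but it does supply every moment \emph{bound} we need, since the classical spatial and temporal regularity estimates for the mSHE propagator are precisely bounds of the form $\E[|\mathcal U_{s,t}(x,y)-\mathcal U_{s',t'}(x',y')|^{2m}]\lesssim_K(|x-x'|+|y-y'|+|t-t'|^{1/2})^{(1-\e)m}$ on compact $K\subset\{s<t\}$.

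Next I would run Kolmogorov's continuity theorem in the spatial variables at a fixed rational pair $(s,t)$. By the joint-moment identity, $\E[|W^\delta_{s,t}(x,y)-W^{\delta'}_{s,t}(x,y)|^{2m}]$ expands into a signed sum of joint moments, each converging as $\delta,\delta'\to0$ to $\E[\mathcal U_{s,t}(x,y)^{2m}]$ while the binomial signs cancel ($\sum_j\binom{2m}{j}(-1)^j=0$), so $W^\delta_{s,t}(x,y)$ is Cauchy in every $L^{2m}$; and $\E[|W^\delta_{s,t}(x,y)-W^\delta_{s,t}(x',y')|^{2m}]$ equals the same expression for the \emph{mollified} mSHE propagator, hence is $\lesssim(|x-x'|+|y-y'|)^{(1-\e)m}$ up to an $O(\delta)$ error, uniformly in $\delta$ on compact sets (the non-compact spatial directions being tamed by the Gaussian decay inside the correlation functions). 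Taking $m$ large — this is where only finitely many moments in Condition (3) are consumed, cf. Remark \ref{wm} — yields for each rational $(s,t)$ a jointly continuous $(x,y)\mapsto\tilde Z_{s,t}(x,y)$ with $(Z_{s,t},f)=\int f\,\tilde Z_{s,t}\,dxdy$ a.s. for all $f\in C_c^\infty(\Bbb R^2)$; in particular $\tilde Z_{s,t}$ is $\sigma(Z_{s,t})$-measurable, so Property (1) holds by construction and Property (2) follows at once from Condition (1).

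It then remains to obtain \eqref{pmod}, joint continuity in all four variables, and the extension to irrational time indices. For \eqref{pmod}: feeding the densities into Condition (2), the double integral there becomes $\int f(x,z)\big(\iint\e^{-1}\psi(\e^{-1}(y_1-y_2))\,\tilde Z_{t,u}(y_1,z)\,\tilde Z_{s,t}(x,y_2)\,dy_1dy_2\big)dxdz$, whose inner mollifier collapses $y_1,y_2$ to a common $y$ (using the spatial continuity and Gaussian decay of $\tilde Z$), leaving $\int f(x,z)\big(\int\tilde Z_{s,t}(x,y)\tilde Z_{t,u}(y,z)\,dy\big)dxdz$; equating this to $(Z_{s,u},f)=\int f\,\tilde Z_{s,u}\,dxdz$ and varying $f$ gives \eqref{pmod} for rational $s<t<u$. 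Finally, \eqref{pmod} together with the independence of $\tilde Z_{s,t}$ and $\tilde Z_{t,t'}$ lets me both (i) access time-increment moments by decomposing overlapping intervals into disjoint pieces and applying the joint-moment identity on each, and (ii) reduce the increment $\tilde Z_{s,t'}-\tilde Z_{s,t}=\int\tilde Z_{s,t}(x,u)\big(\tilde Z_{t,t'}(u,y)-\delta(u-y)\big)\,du$ to the short-time behavior of $\tilde Z_{t,t'}$, which converges to the identity by the one- and two-point cases of Condition (3); together these yield a uniform modulus of continuity in $(s,t)$ on compacts of $\{s<t\}$, and one last Kolmogorov argument over the dense set $\Bbb Q^2$ produces $\tilde Z$ jointly continuous on all of $\{s<t\}\times\Bbb R^2$.

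The main obstacle is the uniform-in-$\delta$ regularity bookkeeping: extracting from the matched $2m$-point correlation functions honest parabolic-H\"older moduli (exponent $\tfrac12^-$ in space, $\tfrac14^-$ in time) that hold uniformly on compact time-intervals bounded away from $\{s=t\}$ and absorb the non-compact spatial directions, together with getting the order of operations right — density and spatial continuity at rational times first, then \eqref{pmod}, then temporal continuity and the real-index extension, since the time-increment moments only become accessible once \eqref{pmod} and the independence are in hand. This is also precisely where the number of moments needed in Condition (3) gets pinned down.
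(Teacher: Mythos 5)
Your proposal is, in substance, the paper's own proof: match moments with the It\^o--Walsh propagator $U_{s,t}$ of \eqref{she}, use Kolmogorov--Chentsov in the spatial variables at each fixed rational $(s,t)$ (the paper takes $L^2$ limits of $(Z_{s,t},\phi_n\otimes\psi_n)$ at rational $(x,y)$ rather than your mollified densities $W^\delta$ -- a cosmetic difference), identify $(Z_{s,t},f)=\int f\,\tilde Z_{s,t}$, derive \eqref{pmod} from Condition (2) using the spatial continuity, and finally get temporal increment moments from \eqref{pmod} together with independence and moment matching, closing with a second Kolmogorov--Chentsov argument over $\Bbb Q^2$; the paper also gives a small dedicated lemma (the ``$\mu=0$'' argument with rescaled mollifiers) for the identification step that you assert more quickly, and your ordering of steps matches the paper's.

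One step is stated too strongly: the polarization claim. Replacing $\phi$ by $\sum_i a_i\phi_i$ and $\psi$ by $\sum_j b_j\psi_j$ and matching coefficients does \emph{not} isolate the individual joint moments $\E[\prod_k(Z_{s,t},\phi_k\otimes\psi_k)]$, because distinct pairings of the $\phi$-indices with the $\psi$-indices contribute to the same monomial (already for $n=2$, the coefficient of $a_1a_2b_1b_2$ is the sum of the two cross pairings), so only certain symmetrized combinations are determined; consequently the further extension to arbitrary $F_k\in C_c^\infty(\Bbb R^2)$ is not justified as written. This is a repairable imprecision rather than a fatal one, because every quantity your argument actually consumes can be reduced, via the triangle inequality, to $2m$-th moments of a \emph{single} pairing against a product test function in which only one tensor factor is a difference -- e.g.\ bound $\|W^\delta_{s,t}(x,y)-W^{\delta'}_{s,t}(x,y)\|_{L^{2m}}$ by the moments of $(Z_{s,t},\rho_\delta(x-\cdot)\otimes(\rho_\delta-\rho_{\delta'})(y-\cdot))$ and $(Z_{s,t},(\rho_\delta-\rho_{\delta'})(x-\cdot)\otimes\rho_{\delta'}(y-\cdot))$, and similarly for the spatial and (after \eqref{pmod} and independence) temporal increments -- and Condition (3) applies verbatim to such product test functions, with the right-hand side controlled by the known regularity of $U_{s,t}$. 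With that bookkeeping done one factor at a time, your proof goes through along exactly the paper's lines.
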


\begin{proof}
    For $x,y\in\Bbb Q$, consider $\phi_n,\psi_n \in C_c^\infty(\Bbb R)$ such that $\phi_n\to \delta_x$ and $\psi_n\to \delta_y$ weakly as $n\to\infty$. Then we have that $(Z_{s,t}, \phi_n\otimes \psi_n)$ form a Cauchy sequence in $L^2(\Omega)$, since the $L^2$-norm of  $(Z_{s,t}, \phi_n\otimes \psi_n)-(Z_{s,t}, \phi_m\otimes \psi_m)$ agrees with the $L^2$-norm of $(U_{s,t}, \phi_n\otimes \psi_n)-(U_{s,t}, \phi_m\otimes \psi_m)$ where $U_{s,t}$ are the propagators for \eqref{she} which is known to be a continuous field satisfying $(U_{s,t}, \phi_n\otimes \psi_n) \to U_{s,t}(x,y)$ a.s. and in $L^2$ as $n\to\infty$.

    By letting $\tilde Z_{s,t}(x,y)$ denote the $L^2$-limit of these random variables for $s,t,x,y\in \Bbb Q$, we obtain a four-parameter field indexed by rationals. By a simple uniform integrability argument and Item (3) in Theorem \ref{mr}, for $n<19 $ ($n\in \Bbb N)$ we obtain that for all $x_j,y_j, s,t\in\Bbb Q$ one has
    \begin{equation}\label{eq}\Bbb E[\prod_{j=1}^n \tilde Z_{s,t}(x_j,y_j)] = \mathbf E[ \prod_{j=1}^n U_{s,t}(x_j,y_j)],
    \end{equation}
    where $U_{s,t}$ are the propagators for \eqref{she}, not necessarily defined on the same probability space. Now by expanding out all powers, this implies that $\Bbb E[ |\tilde Z_{s,t}(x,y)-\tilde Z_{s,t}(x',y)|^{18}]^{1/18} \le C|x-x'|^{1/2}$, also $\Bbb E[ |\tilde Z_{s,t}(x,y)-\tilde Z_{s,t}(x,y')|^{18}]^{1/18} \leq C|y-y'|^{1/2}$, with $C$ uniform over all $(x,x',y,y')$ lying in some compact set of $\Bbb R^4$. %, also $\Bbb E[ |\tilde Z_{s,t}(x,y)-\tilde Z_{s,t'}(x,y)|^{18}]^{1/18} \leq C|t-t'|^{1/4}$, and finally $\Bbb E[ |\tilde Z_{s',t}(x,y)-\tilde Z_{s,t}(x,y)|^{18}]^{1/18} \leq C|t-s|^{1/4}.$
    These bounds hold because by \eqref{eq} we can replace $\tilde Z_{s,t}$ with $U_{s,t}$ which is known to satisfy these bounds, see e.g. \cite{Wal86} or alternatively \cite[Lemma 3.4 - Lemma 3.9]{A} for a more explicit calculation. 

    Thus by the multiparameter Kolmogorov-Chentsov criterion, for each \textbf{fixed} $(s,t)\in\Bbb Q^2$ there exists a field $\tilde Z_{s,t}(x,y)$ with %$-\infty< s<t<\infty$ and 
    $x,y\in \Bbb R$ which is a.s. H\"older continuous on compact sets of $\Bbb R^2$. We need to verify that $(Z_{s,t},f) = \int_{\Bbb R^2} \tilde Z_{s,t}(x,y)f(x,y)dxdy$ almost surely for all $s<t$ and all $f\in C_c^\infty(\Bbb R^2)$. To prove this, we have the following general fact: let $\mu$ be a random variable in $\mathcal S'(\mathbb R^2)$ such that for \textit{some} smooth even (deterministic) $\phi \in \mathcal S(\mathbb R^2)$ one has $$\sup_{a,\e}\mathbb E[(\mu,\phi^a_\e)^2]<\infty,\;\;\;\;\;\;\;\;\;\;\;\;\limsup_{\e \to 0} \mathbb E[(\mu,\phi_\e^a)^2]=0, \text{ for all } a\in\mathbb R^2,$$ where for $a\in\Bbb R^2$ and $\e>0$ the rescaled mollifier is given by $\phi_\e^a (x):= \e^{-1}\phi(\e^{-1} (x-a)).$ Then we claim 
    \begin{equation}\label{mu=0}(\mu,\psi)=0 \text{ almost surely for all } \psi \in \mathcal S(\mathbb R^2).
    \end{equation}To prove this, define $\mu^\e(x):= \mu * \phi_\e(x)$ so that $(\mu,\phi^a_\e) = \mu^\e(a).$ Given some smooth $\psi:\mathbb R^2\to \mathbb R$ of compact support note that $(\mu^\e , \psi) \to (\mu,\psi)$ a.s. as $\e \to 0$. This is a purely deterministic statement. Thus it suffices to show that $(\mu^\e,\psi) \to 0$ in probability. To prove that, suppose that the support of $\psi$ is contained in $[-S,S]^2$ and note by Cauchy-Schwarz that $$|(\mu^\e, \psi)| =\bigg|\int_{\mathbb R} \mu^\e(a) \psi(a)da\bigg|\leq \bigg[ \int_{[-S,S]^2} \mu^\e(a)^2 da\bigg]^{1/2} \|\psi\|_{L^2(\mathbb R)}, $$ so that by taking expectation and applying Jensen we find $$\mathbb E[|(\mu^\e, \psi)|] \leq \|\psi\|_{L^2} \bigg[\int_{[-S,S]^2} \mathbb E[\mu^\e(a)^2]da \bigg]^{1/2}.$$ Since by assumption $\mathbb E[\mu^\e(a)^2]$ is bounded in $a$ and $\mathbb E[\mu^\e(a)^2]\to 0$ as $\e\to 0$, the dominated convergence theorem now gives the result by letting $\e \to 0$ on the right side. This proves the claim for $\psi$ of compact support. For general $\psi \in \mathcal S(\mathbb R^2)$ we may find a sequence $\psi_n \to \psi$ in the topology of $\mathcal S(\mathbb R^2)$, with each $\psi_n$ compactly supported. Then $0=(\mu,\psi_n)\to (\mu,\psi)$ a.s. as $n\to\infty$. 

    Finally, to verify that $(Z_{s,t},f) = \int_{\Bbb R^2} \tilde Z_{s,t}(x,y)f(x,y)dxdy$ almost surely for all $s<t$ and all $f\in C_c^\infty(\Bbb R^2)$, apply \eqref{mu=0} with $\mu = Z_{s,t} - \tilde Z_{s,t}$, with $\phi = u\otimes u$ with $u(x) = \pi^{-1/2} e^{-x^2}.$ Simply by construction, we have the two bounds written above. The reason that $\mu$ defines a random element of $\mathcal S'(\Bbb R^2)$ is because of the fact that $\sup_{a,\epsilon} \Bbb E[(\mu,\phi^a_\epsilon)^2]$ is finite for all $\phi\in \mathcal S(\Bbb R^2)$, by Item (3) of Proposition \ref{mr}. See for instance \cite[Theorem 2.7]{CW15} or \cite[Lemma 9]{MWb} for stronger and more general statements.

    Next, we need to verify \eqref{pmod}. To do this, we of course need to use Item (2) in the assumptions of Proposition \ref{mr}. Using the fact that Item (1) of the proposition has already been proved, we can write 
    \begin{align*}\int_{\Bbb R^4 } f(x,z) \epsilon^{-1} \psi&(\epsilon^{-1}(y-w))Z_{t,u}(dy,dz) Z_{s,t}(dx,dw) \\&= \int_{\Bbb R^4} f(x,z) \delta^{-1}\psi(\delta^{-1}(y-w))\tilde Z_{t,u}(y_1,z) \tilde Z_{s,t}(x,y_2)dxdydwdz.
    \end{align*}
    As $\epsilon \to 0$, the left side converges in probability to $\int_{\Bbb R^2} f(x,z) Z_{s,u}(dx,dz)$ which by Item (1) of the proposition is a.s. equal to $\int_{\Bbb R^2} f(x,z) \tilde Z_{s,u}(x,z)dxdz$. By continuity of $\tilde Z$ on compacts, the right side converges almost surely to $\int_{\Bbb R^3} f(x,z) \tilde Z_{t,u}(x,y)\tilde Z_{s,t}(x,y) dxdydz.$ Thus for $s<t<u$ and $f\in C_c^\infty(\Bbb R)$ we have the a.s. relation $$\int_{\Bbb R^2} f(x,z) \tilde Z_{s,u}(x,z)dxdz = \int_{\Bbb R^3} f(x,z) \tilde Z_{t,u}(x,y)\tilde Z_{s,t}(x,y) dxdydz.$$ Now take $f=f_n$ which converges weakly to $\delta_{(x,z)}$ and use the continuity of $\tilde Z$ to obtain \eqref{pmod}.

    So far, this argument shows that for each \textit{fixed} $(s,t)\in \Bbb Q^2$ with $s<t$, we have a continuous function $\tilde Z_{s,t} (x,y)$ such that the desired properties are respected. However, we still need to show the continuity in $(s,t)$ and subsequently extend the field continuously to all four parameters. To do this, we claim the two bounds $\Bbb E[ |\tilde Z_{s,t}(x,y)-\tilde Z_{s,t'}(x,y)|^{18}]^{1/18} \leq C|t-t'|^{1/4}$, and $\Bbb E[ |\tilde Z_{s',t}(x,y)-\tilde Z_{s,t}(x,y)|^{18}]^{1/18} \leq C|s'-s|^{1/4}.$ To see this, one expands everything out and uses \eqref{eq} in conjunction with the propagator equation \eqref{pmod}, then notes that the corresponding bounds do hold for $U_{s,t}$, see \cite{Wal86} or alternatively \cite[Lemma 3.4 - Lemma 3.9]{A} for a more explicit calculation. By multiparameter Kolmogorov-Chentsov, the required modification then exists on the same probability space, and this is exactly the place where we need enough moments (18 to be exact).
\end{proof}

\begin{prop}[Solving the martingale problem] \label{2}
    Let $\tilde Z_{s,t}(x,y)$ be the continuous modification from Proposition \ref{mr0}. Fix some $x,s\in \Bbb R$. Then for all $\phi\in C_c^\infty(\Bbb R)$ the process $$M_t(\phi) := (\tilde Z_{s,t}(x,\bullet),\phi)_{L^2(\Bbb R)}-\frac12 \int_s^t (\tilde Z_{s,u}(x,\bullet),\phi'')_{L^2(\Bbb R)}du,\;\;\;\;\;\;\;\;\;\;\; t\ge s$$ is a continuous martingale in the filtration $(\mathcal F_{s,t})_{t\ge s}$ where $\mathcal F_{s,t}:=\sigma(\tilde Z_{a,b}(x,y): s<a<b<t,x,y\in\Bbb R\}$. The quadratic variations are furthermore given by $$\langle M(\phi)\rangle_t = \int_s^t (\tilde Z_{s,u}(x,\bullet)^2 , \phi^2)_{L^2(\Bbb R)} du.$$
\end{prop}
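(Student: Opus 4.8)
The plan is to derive all three assertions from the \emph{genuine} It\^o--Walsh propagators $U_{s,t}(x,y)$ of \eqref{she} (defined on some, possibly different, probability space) by a pure moment-transfer argument, using only the propagator identity \eqref{pmod}, the independence property from Item (2) of Proposition \ref{mr0}, and the moment identity \eqref{eq}. Write $P_h=e^{\frac h2\partial_y^2}$ for the heat semigroup, with kernel $g_h$. I will use four elementary facts: \emph{(i)} $\E[\til Z_{s,t}(x,y)]=g_{t-s}(y-x)$, by Item (3) of Proposition \ref{mr} with $n=1$; \emph{(ii)} by the joint continuity of $\til Z$ on compacts together with the Gaussian-type moment bounds it inherits from $U$ via \eqref{eq} (cf.\ the proof of Proposition \ref{mr0}), \eqref{eq} holds for all real indices, as do its ``tested'' forms $\E[\prod_j(\til Z_{s,t}(x_j,\bullet),\phi_j)]=\mE[\prod_j(U_{s,t}(x_j,\bullet),\phi_j)]$; moreover, expanding each $\til Z_{s,u_k}(x_k,\cdot)$ by \eqref{pmod} into an iterated spatial convolution of the \emph{independent} increments $\til Z_{r_{\ell-1},r_\ell}$ over the ordered distinct times present, and applying \eqref{eq} to the resulting single-interval factors, these identities extend to joint moments over several time intervals sharing a common left endpoint (the same combinatorial expansion being valid for $U$); \emph{(iii)} $U$ solves the martingale problem for \eqref{she}: by It\^o--Walsh theory \cite{Wal86}, $(U_{s,t}(x,\bullet),\phi)-\frac12\int_s^t(U_{s,u}(x,\bullet),\phi'')\,du=\int_s^t\!\int_{\R}\phi(z)U_{s,u}(x,z)\,W(du,dz)$ for the driving white noise $W$; \emph{(iv)} $\til Z\ge 0$, as $Z_{s,t}$ is a nonnegative measure.

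First I would check continuity and the martingale property. Continuity of $t\mapsto M_t(\phi)$ is immediate from the joint continuity of $\til Z$ on compacts, since both terms defining $M_t(\phi)$ are integrals over the fixed compact set $\mathrm{supp}\,\phi$. For the martingale property, fix $t<t+h$. By \eqref{pmod} one has $(\til Z_{s,u}(x,\bullet),\varphi)=\int_{\R}\til Z_{s,t}(x,y)(\til Z_{t,u}(y,\bullet),\varphi)\,dy$ for $u\ge t$; since $\til Z_{s,t}(x,\bullet)$ is $\mathcal F_{s,t}$-measurable while $\{\til Z_{t,u}\}_{u\ge t}$ is independent of $\mathcal F_{s,t}$ (Item (2) of Proposition \ref{mr0}), fact (i) gives $\E[(\til Z_{s,u}(x,\bullet),\varphi)\mid\mathcal F_{s,t}]=(\til Z_{s,t}(x,\bullet),P_{u-t}\varphi)$. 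Applying this with $(\varphi,u)=(\phi,t+h)$ and with $\varphi=\phi''$ integrated over $u\in[t,t+h]$, and then invoking the semigroup identity $P_h\phi-\phi=\frac12\int_0^hP_r\phi''\,dr$, one gets
\[
\E\big[M_{t+h}(\phi)\mid\mathcal F_{s,t}\big]-M_t(\phi)=\big(\til Z_{s,t}(x,\bullet),\,P_h\phi-\phi\big)-\tfrac12\int_0^h\big(\til Z_{s,t}(x,\bullet),\,P_r\phi''\big)\,dr=0.
\]
All integrability here, including $M_t(\phi)\in L^2$, comes from fact (ii), e.g.\ $\E[(\til Z_{s,t}(x,\bullet),\phi)^2]=\mE[(U_{s,t}(x,\bullet),\phi)^2]<\infty$.

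For the quadratic variation: since $M(\phi)$ is a continuous $L^2$-martingale, by uniqueness in the Doob--Meyer decomposition \cite{RY99} it suffices to show that $A_t:=\int_s^t(\til Z_{s,u}(x,\bullet)^2,\phi^2)_{L^2(\R)}\,du$ — continuous and adapted, increasing by fact (iv), null at $s$, and integrable since $\E[\til Z_{s,u}(x,z)^2]=\mE[U_{s,u}(x,z)^2]$ — makes $M(\phi)^2-A$ a martingale; as $M(\phi)$ is already a martingale, this reduces to $\E[(M_{t+h}(\phi)-M_t(\phi))^2\mid\mathcal F_{s,t}]=\E[A_{t+h}-A_t\mid\mathcal F_{s,t}]$. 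Expanding the increment via \eqref{pmod} as before, $M_{t+h}(\phi)-M_t(\phi)=\int_{\R}\til Z_{s,t}(x,y)\,N^{y}\,dy$, where $N^{y}:=(\til Z_{t,t+h}(y,\bullet),\phi)-\phi(y)-\frac12\int_t^{t+h}(\til Z_{t,u}(y,\bullet),\phi'')\,du$ depends only on $\{\til Z_{t,u}\}_{u\in[t,t+h]}$; conditioning on $\mathcal F_{s,t}$ and using independence,
\[
\E\big[(M_{t+h}(\phi)-M_t(\phi))^2\mid\mathcal F_{s,t}\big]=\iint_{\R^2}\til Z_{s,t}(x,y)\,\til Z_{s,t}(x,y')\,\E[N^{y}N^{y'}]\,dy\,dy'.
\]
Now $\E[N^{y}N^{y'}]$ is a tested second joint moment of $\til Z$ over intervals with common left endpoint $t$, so by fact (ii) it equals the same expression for $U$; for $U$, $N^{y}$ is exactly the increment of the martingale in fact (iii) (from time $t$, started at $\delta_y$), so the It\^o isometry followed by \eqref{eq} once more gives
\begin{align*}
\E[N^{y}N^{y'}]&=\int_t^{t+h}\int_{\R}\phi(z)^2\,\mE[U_{t,u}(y,z)U_{t,u}(y',z)]\,dz\,du\\
&=\int_t^{t+h}\int_{\R}\phi(z)^2\,\E[\til Z_{t,u}(y,z)\til Z_{t,u}(y',z)]\,dz\,du.
\end{align*}
Substituting, changing the order of integration, and recognising via \eqref{pmod} and independence that $\iint\til Z_{s,t}(x,y)\til Z_{s,t}(x,y')\E[\til Z_{t,u}(y,z)\til Z_{t,u}(y',z)]\,dy\,dy'=\E[\til Z_{s,u}(x,z)^2\mid\mathcal F_{s,t}]$, one arrives at $\E[(M_{t+h}(\phi)-M_t(\phi))^2\mid\mathcal F_{s,t}]=\int_t^{t+h}\int_{\R}\phi(z)^2\,\E[\til Z_{s,u}(x,z)^2\mid\mathcal F_{s,t}]\,dz\,du=\E[A_{t+h}-A_t\mid\mathcal F_{s,t}]$, which is what was needed.

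The main obstacle is not the algebra — which is short and closes exactly — but the analytic bookkeeping compressed into fact (ii): promoting \eqref{eq} to real indices, to its tested forms, and to joint moments over several intervals with a shared left endpoint. This rests on the joint continuity of $\til Z$ and, crucially, on \emph{uniform} Gaussian-type spatial decay of $\til Z_{s,t}(x,y)$ in $y$ (inherited from $U$), which is what legitimises the absolute convergence of spatial integrals such as $\int_{\R}\til Z_{s,t}(x,y)N^{y}\,dy$ and every Fubini and conditional-Fubini interchange used above. I expect no real difficulty beyond this; note in particular that only \emph{second} moments of $\til Z$ actually enter the proof of Proposition \ref{2}, higher moments being needed only for the regularity statements of Proposition \ref{mr0}, which we take as given.
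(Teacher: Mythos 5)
Your argument is correct, and while the martingale-property part coincides with the paper's (both rest on $\E[(\til Z_{s,u}(x,\bullet),\varphi)\mid\mathcal F_{s,t}]=(\til Z_{s,t}(x,\bullet),P_{u-t}\varphi)$, obtained from \eqref{pmod} and independence of increments, plus the heat-semigroup identity), your treatment of the quadratic variation takes a genuinely different route. The paper works with partition sums: it shows that the $L^2$-distance between $\sum_i(z_{t_{i+1}}-z_{t_i},\phi)^2$ and $\sum_i(z_{t_i}^2,\phi^2)(t_{i+1}-t_i)$ is unchanged when $z$ is replaced by the genuine solution $\mathcal U$ of \eqref{she} (an eighth-order, multi-time moment matching carried out by expanding through \eqref{pmod}, \eqref{eq} and independence), and then invokes the known convergence of these partition approximations of the bracket for $\mathcal U$, upgraded to $L^2$ by uniform integrability. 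You instead verify the Doob--Meyer characterization directly: factoring the increment through \eqref{pmod} as $\int_{\R}\til Z_{s,t}(x,y)N^y\,dy$, transferring the deterministic kernel $\E[N^yN^{y'}]$ to the true propagators by the same moment matching, evaluating it via Walsh's It\^o isometry \cite{Wal86}, and concluding $\langle M(\phi)\rangle=A$ by uniqueness of the bracket of a continuous $L^2$-martingale \cite{RY99}. Both proofs rest on the same two pillars, namely multi-time moment transfer through \eqref{pmod}/\eqref{eq} and a known property of the It\^o--Walsh solution; what yours buys is that only second-order (joint) moments of $\til Z$ enter beyond what Proposition \ref{mr0} already provides, and no partition limit or uniform-integrability argument is needed, at the price of the conditional-Fubini and Gaussian-decay bookkeeping you correctly flag in your ``fact (ii)'' (extension of \eqref{eq} to real indices, to tested forms, and to several intervals sharing a left endpoint); what the paper's route buys is that it avoids explicit conditional computations by outsourcing the limit to the standard quadratic-variation approximation theorem for $\mathcal U$. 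The implicit steps you rely on (independence of $\sigma(\til Z_{a,b}:s<a<b<t)$ from the forward increments via a common refinement, and the multi-interval moment identities) are of the same nature as those the paper leaves unwritten, so I see no genuine gap.
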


\begin{proof}
    Without loss of generality we can assume $s=x=0$. Let $z_t(y):= Z_{0,t}(0,y).$ Applying \eqref{pmod} and using independence of $\tilde Z_{0,s}$ and $\tilde Z_{s,t}$, one obtains that $$\Bbb E[ z_t (y) | \mathcal F_{0,s}] = p_{t-s} * z_s(y).$$ where $p_t(x):= (2\pi t)^{-1/2} e^{-x^2/2t}$ is the standard heat kernel. This means that if $s<t$ then $\Bbb E[ (z_t,\phi)_{L^2(\Bbb R)}|\mathcal F_s] = (z_s, p_{t-s}*\phi)_{L^2(\Bbb R)}$, and from here the martingality of $M_t(\phi)$ is an easy and direct consequence. Now we need to compute the quadratic variations. For this, we claim that for any deterministic sequence of partitions of $[0,t]$ with mesh tending to zero, one has $$\lim_{N\to \infty} \Bbb E\bigg[ \bigg( \sum_{i=1}^N (z_{t_{i+1}}-z_{t_i}, \phi)_{L^2(\Bbb R)}^2 - \sum_{i=1}^N (z_{t_i}^2 ,\phi^2)_{L^2(\Bbb R)} (t_{i+1}-t_i) \bigg)^2\bigg] =0. $$ Indeed if $(t,x)\mapsto \mathcal U_t(x)$ solves \eqref{she} with Dirac initial condition, then notice from the assumptions of Proposition \ref{mr} that 
    \begin{align*}\Bbb E\bigg[ \bigg( &\sum_{i=1}^N (z_{t_{i+1}}-z_{t_i}, \phi)_{L^2(\Bbb R)}^2 - \sum_{i=1}^N (z_{t_i}^2 ,\phi^2)_{L^2(\Bbb R)} (t_{i+1}-t_i) \bigg)^2\bigg]\\&= \mathbf E\bigg[ \bigg( \sum_{i=1}^N (\mathcal U_{t_{i+1}}-\mathcal U_{t_i}, \phi)_{L^2(\Bbb R)}^2 - \sum_{i=1}^N (\mathcal U_{t_i}^2 ,\phi^2)_{L^2(\Bbb R)} (t_{i+1}-t_i) \bigg)^2\bigg].
    \end{align*}
    This follows from a calculation, expanding everything out algebraically and then using the propagator equation \eqref{pmod} together with the moment equality \eqref{eq} repeatedly, to show that replacing $z_t$ by $\mathcal U_t$ does not change the above expectation. But the desired limit is indeed true for the process $\mathcal U_t$ since $\mathcal U_t$ does solve the martingale problem for \eqref{she}, and because the associated quadratic variation of the martingale for $\mathcal U$ is known to be given by $\int_0^t (\mathcal U_s^2, \phi^2)_{L^2(\Bbb R)}ds,$ see e.g. \cite[Proposition 4.11]{BG97}. The claim thus follows immediately.
\end{proof}

\begin{cor}\label{solves}
    Let $\tilde Z_{s,t}(x,y)$ be as in Proposition \ref{mr0}. Then for all fixed $s,x\in\Bbb R$, the law of the process $(t,y) \mapsto \tilde Z_{s,t}(x,y)$ agrees with that of the solution to \eqref{she} started at time $s$ and from initial condition $\delta_x(y)$.
\end{cor}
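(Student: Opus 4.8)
The plan is to identify the law of $v_t := \tilde Z_{s,t}(x,\bullet)$ (for fixed $s,x\in\Bbb R$) by matching it to the classical \emph{martingale problem} for \eqref{she} and then invoking the well-posedness of that martingale problem. Proposition~\ref{2} already does the main analytic work: it shows that $v$ is a continuous process such that, for every $\phi\in C_c^\infty(\Bbb R)$, the process $M_t(\phi)=(v_t,\phi)_{L^2(\Bbb R)}-\tfrac12\int_s^t(v_u,\phi'')_{L^2(\Bbb R)}\,du$ is a continuous martingale with $\langle M(\phi)\rangle_t=\int_s^t(v_u^2,\phi^2)_{L^2(\Bbb R)}\,du$. (The martingale property is stated there relative to $\mathcal F_{s,t}$, hence holds a fortiori relative to the natural filtration of $v$.) This is exactly the martingale problem associated to \eqref{she} with noise coefficient the identity, in the sense of \cite{SV, KS88, BG97}. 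Two things remain: (i) pin down the correct initial datum as $t\downarrow s$, and (ii) quote uniqueness in law for this martingale problem.

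For (i): by continuity of both sides in all parameters, the moment identity \eqref{eq} extends from rational to all real arguments, so for $n=1,2$ (both $<19$) the random variable $(v_t,\phi)_{L^2(\Bbb R)}$ has the same first two moments as $(\mathcal U_t,\phi)_{L^2(\Bbb R)}$, where $\mathcal U$ is the It\^o--Walsh solution of \eqref{she} from $\delta_x$ at time $s$. In particular $\Bbb E[(v_t,\phi)_{L^2(\Bbb R)}]=\int_{\Bbb R} p_{t-s}(x,y)\phi(y)\,dy\to\phi(x)$ and $\Bbb E[((v_t,\phi)_{L^2(\Bbb R)}-\phi(x))^2]=\Bbb E[((\mathcal U_t,\phi)_{L^2(\Bbb R)}-\phi(x))^2]\to 0$ as $t\downarrow s$, since $\mathcal U$ is known to attain its initial datum. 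Hence $v_t\to\delta_x$ weakly in probability as $t\downarrow s$, which is the right initial condition.

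For (ii): the martingale problem for \eqref{she} started from a fixed deterministic measure is well-posed, and its unique-in-law solution is the It\^o--Walsh solution; see \cite{SV, KS88, BG97}. The standard argument reconstructs a space-time white noise $\xi$ from the orthogonal martingale measure $\phi\mapsto M_\bullet(\phi)$ (whose covariance is forced by the prescribed quadratic variation), realizes the solution as a mild/weak solution of \eqref{she} driven by $\xi$, and concludes by pathwise uniqueness for \eqref{she}, which holds because the multiplicative coefficient $u\mapsto u$ is globally Lipschitz (Walsh's theory, cf.\ \cite{Wal86, BG97}). Applying this to $v=\tilde Z_{s,\bullet}(x,\bullet)$ gives that its law equals that of the It\^o--Walsh solution from $\delta_x$ at time $s$, which is the claim. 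We note that the noise reconstruction is carried out in the excerpt below, in the stronger joint form needed for all propagators simultaneously, in Proposition~\ref{jnoise}; for the single-propagator statement of Corollary~\ref{solves} one only needs the classical well-posedness of \eqref{she} as a black box, so there is no circularity.

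The main obstacle is precisely step (ii): the quadratic variation $\int_s^t(v_u^2,\phi^2)_{L^2(\Bbb R)}\,du$ is a genuinely nonlinear functional of the solution, so it is not at all automatic that the martingale problem determines the law — this is exactly where the nontrivial well-posedness theory for \eqref{she} must be invoked. A secondary technical point is the singular initial datum $\delta_x$: the process is only measure-valued at $t=s$ and becomes function-valued for $t>s$, so the martingale problem and its uniqueness must be phrased for such data; this is standard, and the relevant regularity/moment bounds for $v_t$ needed to make the argument go through coincide with those of the stochastic heat kernel thanks to \eqref{eq}.
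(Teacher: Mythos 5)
Your overall route is the same as the paper's: use Proposition \ref{2} to see that $(t,y)\mapsto\tilde Z_{s,t}(x,y)$ solves the martingale problem for \eqref{she} away from the initial time, invoke known well-posedness to identify the conditional law from any positive time, and use the moment identity \eqref{eq} to pin down the Dirac initial datum. The gap is in how you handle that initial datum. The references you quote (\cite{SV, KS88, BG97}) do not give well-posedness of the martingale problem for \eqref{she} started from a Dirac mass: \cite[Proposition 4.11]{BG97} identifies the law only for (suitably growing) function-valued initial data, which is why the paper applies it only from positive times $s'>s$, with the random initial condition $\tilde Z_{s,s'}(x,\cdot)$, and then must take $s'\downarrow s$ by a separate argument. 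Your substitute for that step --- $\Bbb E[((v_t,\phi)-\phi(x))^2]\to 0$, i.e. vague convergence of $v_t$ to $\delta_x$ in probability --- is not by itself enough to conclude that the law of the whole path is that of the solution started from $\delta_x$: one must also control the behavior of $v_{s'}$ near the initial time (no mass piling up or spreading at the wrong rate), uniformly enough that the solution-from-$v_{s'}$ converges to the solution-from-$\delta_x$; vague convergence in probability carries no such uniform control, and step (ii) as you state it rests on a uniqueness theorem for measure-valued initial data that is not in the cited literature.

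This is exactly where the paper does its real work: following \cite[Section 6]{Par19}, it reduces the $t\downarrow s$ limit to the two quantitative bounds \eqref{delta} and \eqref{delta1}, namely $\Ex[z_t(x)^2]\le \Con\, t^{-1/2}K(t,x)$ and $\Ex[|z_t(x)-K(t,x)|^2]\le \Con\, K(t,x)$ with $\Con$ uniform in $t>0$ and $x\in\Bbb R$, and verifies them by letting $\phi\to\delta_x$ in the moment identities for $k=1,2$ (legitimate because \eqref{eq} holds pointwise and both sides are continuous). Your closing remark that the needed moment bounds ``coincide with those of the stochastic heat kernel thanks to \eqref{eq}'' points at the right fix, but it stays an assertion: you never state which bounds are required, nor how they combine with the identification from positive times to force the law from $\delta_x$. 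So the proof is incomplete at precisely the step the corollary is about; supplying the heat-kernel comparison bounds and the argument of \cite{Par19} (or an equivalent continuity-in-initial-data statement with uniform control near $t=s$) would close it.
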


\begin{proof}
    Again we can assume $s=x=0$, so let $z_t(y):= Z_{0,t}(0,y).$ The martingale problem from Proposition \ref{2}  identifies the law of the process $(t,y)\mapsto z_{t+s}(y)$ as that of the solution of the multiplicative-noise heat equation with initial condition $z_s$, for $s>0$ \cite[Proposition 4.11]{BG97}. However, it remains to identify the initial data as a Dirac mass, i.e., take the $s\downarrow 0$ limit rigorously. In \cite[Section 6]{Par19} there is a general approach to do exactly this. Specifically, it suffices to show as in Lemma 6.6 of and Equation (64) of that paper the two bounds 
		\begin{align}\label{delta} & \Ex[z_t(x)^2]\leq \Con \cdot t^{-1/2}K(t,x),\\ \label{delta1}
			& \Ex[ |z_t(x)-K(t,x)|^2] \leq \Con \cdot K(t,x),
		\end{align} where $K$ is the standard heat kernel, and $\Con$ is independent of $t>0$ and $x\in\mathbb R$. The solution of \eqref{she} with $\delta_0$ initial condition certainly satisfies this bound, and by the assumptions in Proposition \ref{mr}, we know $\Ex[(\int_\mathbb R z_t(x)\phi(x)dx)^k] = \mE[(\int_\mathbb R \mathcal{U}_t(x)\phi(x)dx)^k]$ for  $k=1,2,3,4$ and $\phi\in C_c^\infty(\mathbb R)$, where $(t,x) \mapsto \mathcal{U}_t(x)$ solves \eqref{she} with $\delta_0$ initial data. From here we can conclude by letting $\phi \to \delta_x$ that $\Ex[z_t(x)^k] = \mE[ \mathcal{U}_t(x)^k]$ for all $k\in \{1,2\}$ and all $x\in\mathbb R$.
\end{proof}

\begin{prop}[Identifying the joint noise and defining it on the same probability space] \label{jnoise}
    Let $\tilde Z_{s,t}$ be as in Proposition \ref{mr0}. On can construct a space-time white noise $\xi$ on the same probability space, such that if $s,x\in\Bbb R$, then $(t,y)\mapsto \tilde Z_{s,t}(x,y)$ almost surely solves \eqref{she} driven by that same noise $\xi$. Thus we have completed Proposition \ref{mr}.
\end{prop}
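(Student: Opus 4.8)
The plan is to reconstruct the noise $\xi$ from the propagators $\tilde Z_{s,t}$ via the stochastic integral formula already hinted at in the proof of Proposition \ref{gbf}. Concretely, fix a reference time $s$ and a reference point $x$, and recall from Proposition \ref{2} that for each $\phi \in C_c^\infty(\Bbb R)$ the process $M_t(\phi) = (\tilde Z_{s,t}(x,\bullet),\phi)_{L^2} - \frac12\int_s^t(\tilde Z_{s,u}(x,\bullet),\phi'')_{L^2}\,du$ is a continuous $(\mathcal F_{s,t})$-martingale with $\langle M(\phi)\rangle_t = \int_s^t (\tilde Z_{s,u}(x,\bullet)^2,\phi^2)_{L^2}\,du$. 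By polarization, the family $\{M_t(\phi)\}_\phi$ has joint bracket $\langle M(\phi),M(\psi)\rangle_t = \int_s^t (\tilde Z_{s,u}(x,\bullet)^2,\phi\psi)_{L^2}\,du$. Since $\tilde Z_{s,u}(x,y) > 0$ for $u > s$ (inherited from strict positivity of the It\^o--Walsh solution via Corollary \ref{solves} — or one can localize away from the zero set), one would \emph{define} $\xi$ by declaring, for test functions $g(u,y)$ supported in $\{u > s\}$, the integral $\xi(g) := \int \int g(u,y)\, \tilde Z_{s,u}(x,y)^{-1}\, M(dy\,du)$, where $M(dy\,du)$ is the martingale measure associated to $\{M_t(\phi)\}$ in the sense of Walsh. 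The point is that this $\xi$ is then a martingale measure whose covariance is $\Bbb E[\xi(g)\xi(h)] = \int\int g(u,y)h(u,y)\,dy\,du$, i.e. a space-time white noise, and by construction $\tilde Z_{s,t}(x,\cdot)$ solves the Walsh form of \eqref{she} driven by $\xi$.

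The key steps, in order: (i) Use Proposition \ref{2} and polarization to identify the covariance structure of the martingale measure $M$ generated by $(t,y)\mapsto\tilde Z_{s,t}(x,y)$; verify it is orthogonal and has the right intensity $\tilde Z_{s,u}(x,y)^2\,dy\,du$ so that Walsh's theory applies. (ii) Construct $\xi$ by integrating $\tilde Z^{-1}$ against $M$; check that $\xi$ is a genuine white noise (covariance computation, continuity/extension to all $L^2$ test functions), handling the strict-positivity issue by working on $\{u \ge s + \delta\}$ and sending $\delta \to 0$, or by invoking that the zero set of the solution is a.s. Lebesgue-null. (iii) Check that $\tilde Z_{s,t}(x,\cdot)$ solves \eqref{she} driven by this $\xi$ — this is essentially immediate from the definition, running the It\^o integral backwards. (iv) Show the \emph{same} noise $\xi$ works simultaneously for all starting data $(s',x')$: this is where one must argue consistency. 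Using the propagator relation \eqref{pmod}, $\tilde Z_{s',t}(x',y) = \int \tilde Z_{s',s}(x',w)\tilde Z_{s,t}(w,y)\,dw$ for $s' < s$, and the semigroup/flow structure forces the martingale measures built from different base points to agree on their common domain, so the $\xi$ reconstructed from one base point automatically drives all the others. One should also use the independence assumption (1) to glue across disjoint time intervals, extending $\xi$ from $\{u > s\}$ to all of $\Bbb R^2$. (v) Conclude that, together with Corollary \ref{solves}, the joint law of any finite collection $(Z_{s_1,t_1},\dots,Z_{s_m,t_m})$ is determined, since it is a deterministic functional (the It\^o--Walsh solution map applied to Dirac data, then superposed by \eqref{pmod}) of the single white noise $\xi$.

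The main obstacle I expect is step (iv): showing that a \emph{single} white noise drives the entire flow, not just one propagator at a time. The subtlety is that the martingale measure $M^{(s,x)}$ built from base point $(s,x)$ only ``sees'' randomness in the time window $(s,\infty)$, and a priori the reconstructed $\xi$ could depend on $(s,x)$. Resolving this requires carefully exploiting both the flow property \eqref{pmod} (to relate $M^{(s',x')}$ to $M^{(s,x)}$ on overlapping time intervals via the identity $dM^{(s',x')}_t(y) = \tilde Z_{s',s}(x',\cdot) * [\text{noise increment at } (t,\cdot)]$, which should show the noise increments coincide) and the independence property (1) (to handle non-overlapping intervals and to check the pieces assemble into something with independent increments on disjoint space-time regions). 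A secondary technical annoyance is the division by $\tilde Z_{s,u}(x,y)$ near $u = s$, where the solution concentrates near $x$; this is handled by a standard localization/stopping argument, but it must be done with care to ensure the resulting stochastic integrals are well-defined and the limiting object is honestly white in time down to $u = s$.
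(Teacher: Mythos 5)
Your construction of $\xi$ is the same as the paper's: one integrates $\tilde Z_{s,u}(x,y)^{-1}\phi(y)$ against the Walsh martingale measure built from the martingales of Proposition \ref{2}, exactly the formula $(\xi,\phi\otimes 1_{[0,t]})=\int_0^t\int_{\Bbb R} z(u,y)^{-1}\phi(y)\,M(du,dy)$ used in the paper (following \cite{KS88}, \cite{Wal86}), and Gaussianity/whiteness comes from L\'evy's criterion since the bracket is $t\|\phi\|_{L^2}^2$. Two remarks on where your write-up stops short of a proof. First, your secondary worry about dividing by $\tilde Z_{s,u}(x,y)$ near $u=s$ is moot: the Walsh integrand $z^{-1}\phi$ is integrated against a martingale measure with intensity $z(u,y)^2\,dy\,du$, so the relevant $L^2$ quantity is exactly $\int_0^t\|\phi\|_{L^2}^2\,du<\infty$ no matter how degenerate $z$ is near $u=s$; no localization or $\delta\downarrow 0$ limit is needed (strict positivity at positive times, inherited via Corollary \ref{solves}, is all one uses). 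Second, your step (iv) — that a \emph{single} $\xi$ drives the whole flow — is indeed the crux, and the paper closes it not by an abstract ``the flow structure forces the martingale measures to agree'' argument but by a concrete computation: for two spatial base points $x_i,x_j$ with the \emph{same} start time, one computes the cross-bracket $\langle M^{i}(\phi),M^{j}(\phi)\rangle_t=\int_0^t (z^i_u z^j_u,\phi^2)\,du$ (proved by the same moment-matching/uniform-integrability argument as in Proposition \ref{2}, using \eqref{eq} and \eqref{pmod}), which gives $\Bbb E[(\xi_i,\phi\otimes 1_{[0,t]})(\xi_j,\phi\otimes 1_{[0,t]})]=t\|\phi\|_{L^2}^2$ and hence $\Bbb E[(\xi_i-\xi_j,\phi\otimes 1_{[0,t]})^2]=0$; one then passes from rational to all real $x$ by continuity of $\tilde Z$, and different start times $s_1<s_2$ are reduced to the equal-start-time case by the Markov property from Corollary \ref{solves} (restart the $s_1$-propagator at time $s_2$ via \eqref{pmod}). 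If you flesh out (iv) along these lines — cross-brackets via moments, then Markov restart — your argument matches the paper's; as written, that step is asserted rather than established.
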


\begin{proof}
    Throughout this proof, fix $s\in\Bbb R$, and let $\{x_i\}_{i\ge 1}$ be some enumeration of $\Bbb Q$. % and finite Borel measures $\nu_1,\nu_2$ on $\Bbb R$. 
    Let $z_i(t,y):= \tilde Z_{s,t}(x_i,y).$ %\int_\Bbb R \tilde Z_{s_i,t}(x,y)\nu_i(dx).$ 
    We will argue that the driving noise $\xi_i$ of the process $z_i$ is an unambiguous notion, and furthermore that all $\xi_i$ are the same noise, i.e., $\xi_i=\xi_1.$% Without loss of generality, $s_1<s_2$. Note by the Markov property of $t \mapsto z_i(t,\bullet)$ which is guaranteed by Proposition \ref{solves}, we can restart the process at time $s_2$, and this reduces us to the case of proving the claim when $s_1=s_2$, which we assume without loss of generality is 0. 

    Assume without loss of generality that $s=0$. By using the result of Corollary \ref{solves}, we now argue that each $\xi_i$ can be recovered measurably from the process $z_i$ itself. This will be a consequence of the Walsh theory of stochastic integrals \cite{Wal86}. More precisely, we let $M^{i}_t(\phi):= (z_i(t,\bullet),\phi)_{L^2(\Bbb R)} -\frac12 \int_0^t (z_i(s,\bullet),\phi'')_{L^2(\Bbb R)}ds$ denote the martingales from above but corresponding to the respective initial data $\delta_{x_i}$. These are \textit{orthomartingales} in the sense of \cite{Wal86}, thus we can define stochastic integrals against them. %Let $z_i := \tilde Z_{0,t}(x_i,\bullet)$. 
    Then as observed in \cite{KS88}, Walsh's theory guarantees that we can measurably recover the driving noises by the stochastic integral formula $$(\xi_i, \phi\otimes 1_{[0,t]})_{L^2(\Bbb R_+\times \Bbb R)} = \int_0^t \int_\Bbb R z_i(s,y)^{-1}\phi(y) M^i(ds,dy). $$ This formula implies that $(\xi_i, \phi\otimes 1_{[0,t]})$ is a martingale in $t$ with quadratic variation $\|\phi\|_{L^2(\Bbb R)}^2 t$, which implies that each $\xi_i$ is a Gaussian space-time white noise (defined on the same probability space) by Levy's criterion. A similar calculation as above in Proposition \ref{2} yields $\langle M^{i}(\phi), M^{j}(\phi)\rangle_t = \int_0^t (z^i_sz_s^j,\phi^2 ) ds  $. It follows that $\xi_i= \xi_j$ for the respective noises, since the above formula yields the relation $\Bbb E[ (\xi_i,\phi\otimes 1_{[0,t]})(\xi_j,\phi\otimes 1_{[0,t]})] = t \|\phi\|_{L^2(\Bbb R)}^2$ which in turn yields $\Bbb E[(\xi_i-\xi_j,\phi\otimes 1_{[0,t]})^2]=0.$

    From $\Bbb Q$ one can extend the result to $\Bbb R$ simply because $\tilde Z$ is already known to be continuous in all four variables. Now if $s_1<s_2$ and $x_1,...,x_n\in \Bbb R$ and one wishes to show that the driving noise of $(t,y) \mapsto Z_{s_i,t}(x_j,y)$ are the same for all $i,j$, note that by the Markov property that is guaranteed by Proposition \ref{solves}, one can restart the process at time $s_2$, which reduces us back to the case where $s_1=s_2$ which has already been covered above.
\end{proof}

\begin{rk}\label{wm} Notice that we only used Item (3) of Proposition \ref{mr} up to $n=19$ rather than all $n\in \Bbb N$. This is because we used an $18^{th}$ moment bound in the proof of Proposition \ref{mr0}. This $19^{th}$ moment assumption could be further relaxed, but the proof would become longer. 

Let us illustrate how to weaken the moment assumptions in the simpler context of the ``Geometric Brownian flow" of Proposition \ref{gbf}. We claim that in Item (3) of that proposition, the assumption of 6 moments can be weakened to 4 moments. To prove it, note that $\tilde X_t:= e^{-\frac{t}2} G_{0,t}$ is a martingale indexed by $\Bbb Q$, thus it admits a cadlag extension to $\Bbb R_+$ by the fundamental regularization lemma, see \cite[Chapter II, Theorem (2.5)]{RY99}. Then \eqref{qvlim} still implies that the (optional) quadratic variation of the martingale $\tilde X$ is given by $[\tilde X]_t = \int_0^t \tilde X_s^2 ds.$ Continuity of $[\tilde X]$ implies continuity of $\tilde X$ itself, thanks to the identity $\delta [Y] = (\delta Y)^2$ for any semimartingale $Y$, where $(\delta Y)_s:= Y_s-Y_{s-}.$ Thus $\tilde X_t$ and $\tilde X_t^2 - \int_0^t \tilde X_s^2 ds$ are both continuous martingales, completing the proof exactly as before. But Item (3) has only been used up to $n=4$ rather than $n=6$, since we avoided the use of Kolmogorov-Chentsov to construct the continuous modification.

This illustrates the fact that that when we assume less moments, we can no longer keep the proof within the realm of \textit{continuous} processes, thus needing to think about how to deal with jumps or other irregularities. Very similar considerations apply in the SPDE case of Proposition \ref{mr}: we can assume less than 19 moments, but in that case the proof will become so much longer that we have not pursued it in this paper. The reason is that it would involve introducing various other topologies that are weaker than that of $C(\Bbb R^4_\uparrow)$ and harder to define, then showing retroactively that the resultant four-parameter process $\tilde Z$ ends up being an element of $C(\Bbb R^4_\uparrow)$ anyways. Alternatively, one can use Tsai's exchange method to avoid this \cite{Ts24}. The version with all moments will suffice for us.
\end{rk}

\section{Proof of Theorem \ref{mr2}}\label{s3}

%The proof will consist of two parts, one where we analyze the moments, and another devoted to proving tightness. We split this section into two subsections, corresponding to these two parts.

%\subsection{Moment analysis}

Henceforth we consider the propagators $Z^\epsilon_{s,t}(x,y)$ of the equation given in Theorem \ref{mr2}. Already, these almost surely satisfy the propagator equation 
\begin{equation}\label{propa}Z^\epsilon_{s,u} (x,z) = \int_{\Bbb R} Z_{s,t}^\epsilon(x,y)Z_{t,u}^\epsilon(y,z)dy,\;\;\;\;\;\;\;\;\;\;\;\; s<t<u, \;\;\;\;\;\;\;\;\;\;x,z\in\Bbb R.
\end{equation}
To prove this, one can show that both sides of the equation satisfy the Duhamel form of the equation written in Theorem \ref{mr2}, for which \textit{pathwise uniqueness} is well-known \cite{Wal86} for any fixed $\epsilon>0$. We do not write the calculation here, but the details of a similar calculation can be found in e.g. \cite[Theorem 3.1(vii)]{AKQ14b} or \cite[Lemma 3.12]{A}. 

The bulk of this section will be devoted to calculating the limit of the moments as $\epsilon\to 0$ of the processes $Z^\epsilon_{s,t}$, since that is what is needed to apply Proposition \ref{mr}. \iffalse 
Writing $z_t^\epsilon:= Z_{0,t}^\epsilon(0,\bullet)$, notice that \begin{equation}\label{mart}M_t^\epsilon(\phi):= (z_t^\epsilon, \phi)_{L^2(\Bbb R)} - \frac12 \int_0^t (z^\epsilon_s,\phi'')_{L^2(\Bbb R)}ds
\end{equation}
is a square integrable martingale (this is where we need the noise to be white in time).
\begin{lem}\label{mb}
The quadratic variation is given by \begin{align*}\langle M^\epsilon(\phi)\rangle_t &= \epsilon^{-1/2} \int_0^t \int_{\Bbb R} \bigg(\int_\Bbb R \varphi'(\epsilon^{-1}y) z^\epsilon(s,x-y) \phi(x-y) dy\bigg)^2 dx ds \\&= \epsilon^{2p-\frac12} \int_0^t \int_{\Bbb R} \bigg(\int_\Bbb R\varphi(\epsilon^{-1}y) (z^\epsilon_s\cdot \phi)'(x-y) dy\bigg)^2 dxds
\end{align*}
\end{lem}
The proof is immediate by direct calculation.
\fi

\begin{prop}[Moment formula] \label{3.1} Let $\Phi_\epsilon := \varphi_\epsilon * \varphi_\epsilon$, with $\varphi_\epsilon$ as in Theorem \ref{mr2}. If $\phi: \Bbb R\to\Bbb R$ is bounded and continuous, and $x_1,...,x_n\in\Bbb R$, we have that $$\int_{\Bbb R^n} \Bbb E [ \prod_{j=1}^n Z_{0,t}^\epsilon (x_j, y_j) ]\prod_{j=1}^n \phi(y_j) dy\cdots dy_n = \mathbf E^{(x_1,...,x_n)} _{\mathrm{BM}^{\otimes n}} [ e^{ \epsilon^{2p-\frac12} \sum_{1\le i<j\le n}\int_0^t \Phi_\epsilon^{(2p)} (W^i_s-W^j_s)ds} \prod_{j=1}^n \phi(W^j_t)].$$Here the expectation on the right side is with respect to a $n$-dimensional Brownian motion $W$ started from $(x_1,...,x_n)$, and $\Phi^{(k)}$ denotes the $k^{th}$ derivative of $\Phi$.
\end{prop}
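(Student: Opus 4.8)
The plan is to derive the moment formula by writing the It\^o--Walsh solution $z^\epsilon_t(y) := Z^\epsilon_{0,t}(x,y)$ in its Duhamel (mild) form and then applying the standard Feynman--Kac-type argument for moments of the multiplicative stochastic heat equation. Specifically, for each fixed $\epsilon>0$ the equation \eqref{dshe} is a genuine SPDE with a smooth-in-space noise, so the mild solution is $z^\epsilon_t(y) = \int_\Bbb R K(t,y-w)\delta_x(w)\,dw + \epsilon^{p-1/4}\int_0^t\int_\Bbb R K(t-s,y-w)\,z^\epsilon_s(w)\,\partial_w^p\eta^\epsilon(ds,dw)$, where $K$ is the heat kernel. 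The product $\prod_{j=1}^n z^\epsilon_t(y_j)$ can then be expanded by iterating this Duhamel formula; taking expectations kills the odd terms and, by the isometry for the (smooth-in-space, white-in-time) noise $\partial_y^p\eta^\epsilon$, pairs up the noise factors. The covariance of the noise is $\Bbb E[\partial_y^p\eta^\epsilon(s,y)\,\partial_{y'}^p\eta^\epsilon(s',y')] = \delta(s-s')\,(-1)^p \partial_y^{2p}(\varphi_\epsilon*\varphi_\epsilon)(y-y')$ (integrating by parts to move all derivatives onto one argument), i.e. $\delta(s-s')\,\Phi_\epsilon^{(2p)}(y-y')$ up to the sign bookkeeping that will produce the stated $\epsilon^{2p-1/2}$ prefactor once combined with the $\epsilon^{p-1/4}$ coupling constant squared.

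The cleanest route to the closed form on the right-hand side is to recognize the resulting sum over pairings as the Feynman--Kac representation of the $n$-point function: integrating against $\prod_j\phi(y_j)$ and using the Chapman--Kolmogorov property of $K$, the expansion reorganizes into $\mathbf E^{(x_1,\dots,x_n)}_{\mathrm{BM}^{\otimes n}}\big[\exp\big(\epsilon^{2p-1/2}\sum_{i<j}\int_0^t \Phi_\epsilon^{(2p)}(W^i_s-W^j_s)\,ds\big)\prod_j\phi(W^j_t)\big]$, where the Brownian motions arise from the heat kernels and the exponential of the pair interaction is the standard resummation of the chaos/pairing series (each pairing of an $i$-line with a $j$-line at time $s$ contributes a factor $\epsilon^{2p-1/2}\Phi_\epsilon^{(2p)}(W^i_s-W^j_s)$). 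Concretely I would first establish the formula for $n=1$ (trivial, just the heat flow of $\phi$ since the noise term has mean zero), then for $n=2$ carefully — this is where the interaction kernel $\Phi_\epsilon^{(2p)}$ and the exponent $2p-\tfrac12$ get pinned down — and then indicate that the general $n$ case follows by the same pairing bookkeeping, or alternatively cite the analogous computation in \cite{AKQ14b} or \cite{A} as was done for the propagator equation \eqref{propa}. One should note that, since $\Phi_\epsilon^{(2p)}$ is a bounded smooth function for fixed $\epsilon>0$, the exponential moment on the right is finite, so all the manipulations (Fubini, convergence of the pairing series) are justified without delicate estimates.

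The main obstacle is bookkeeping rather than conceptual: correctly tracking the derivatives and signs when integrating by parts to express the noise covariance through $\Phi_\epsilon^{(2p)}$, and verifying that the combinatorial prefactors in the iterated-Duhamel/chaos expansion are exactly those of the exponential of a sum over pairs (as opposed to some other symmetrized object). A secondary technical point is justifying that the It\^o--Walsh solution really does admit the iterated Duhamel expansion with $L^2$-convergent remainder; for fixed $\epsilon>0$ this is classical since the noise is function-valued in space, so one can invoke the standard well-posedness and chaos-expansion theory from \cite{Wal86}. I would present the $n=2$ computation in modest detail and then state that the general case is obtained by an identical but notationally heavier argument, pointing to the cited references for the fully written-out version.
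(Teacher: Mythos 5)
Your approach is correct and rests on essentially the same fact as the paper's, namely the Feynman--Kac moment formula for an SHE driven by a white-in-time, colored-in-space Gaussian noise, as in \cite{BC95}. The paper's proof is more compact only because it observes that the driving noise of \eqref{dshe} equals $(\epsilon^{p-1/4}\partial_y^p\varphi_\epsilon)*\eta$, so the \cite{BC95} computation applies verbatim after substituting that mollifier for $\varphi_\epsilon$; you instead propose to re-derive the identity from the Duhamel iteration and Wick-pairing resummation, which is the standard way the \cite{BC95} formula is established in the first place. For fixed $\epsilon>0$ the noise is a genuine space-smooth Gaussian field, so all the convergence/Fubini issues you mention are indeed unproblematic, and your plan (do $n=1,2$ in detail, resum for general $n$) would work.

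One point you correctly flag but then set aside as ``sign bookkeeping'' deserves to be resolved rather than dismissed: the spatial covariance of $\epsilon^{p-1/4}\partial_y^p\eta^\epsilon$ is $\epsilon^{2p-1/2}(-1)^p\Phi_\epsilon^{(2p)}(y-y')$, \emph{not} $\epsilon^{2p-1/2}\Phi_\epsilon^{(2p)}(y-y')$. Differentiating $\Phi_\epsilon(y-y')$ once in $y$ and once in $y'$ produces a sign each time in the $y'$ variable, and indeed $\Phi_\epsilon^{(2p)}(0)=(-1)^p\|\varphi_\epsilon^{(p)}\|_{L^2}^2$ is negative when $p$ is odd, which cannot be a covariance at zero. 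Equivalently, in the ``replace $\varphi_\epsilon$ by $\rho:=\epsilon^{p-1/4}\partial_y^p\varphi_\epsilon$'' shortcut the relevant kernel is $\rho*\rho(-\,\cdot)$, not $\rho*\rho$, and these differ by $(-1)^p$ when $\rho$ is an odd function. So both your proposal and the paper's one-line sketch carry a sign error in the exponent for odd $p$. This is harmless downstream: after the It\^o--Girsanov step the relevant additive functional is $\epsilon^{4p-1}\int_0^t(\Phi_\epsilon^{(2p-1)}(X^i_s-X^j_s))^2\,ds$, which is insensitive to the overall sign, and $\sigma_p^2$ comes out correct. Nevertheless, a careful writeup should state the exponent with the $(-1)^p$ factor (or simply as $\epsilon^{2p-1/2}$ times the genuine covariance of $\partial_y^p\eta^\epsilon$).
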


The proof follows from using the Feynman-Kac representation of $Z^\epsilon$, see \cite[Section 2]{BC95} for an identical calculation (neither the factor $\epsilon^{2p-\frac12}$ nor the $(2p)^{th}$ derivative on $\Phi_\epsilon$ appear there, since their driving noise is just $\eta^\epsilon$ as opposed to $\epsilon^{p-\frac14} \partial_y^p\eta^\epsilon$, but this simply corresponds to replacing the mollifier $\varphi_\epsilon$ by $\epsilon^{p-\frac14} \partial_y^p\varphi_\epsilon$, which precisely leads to the replacement $\Phi_\epsilon\to \epsilon^{2p-\frac12} \Phi_\epsilon^{(2p)}$ in the above moment formula). By It\^o's formula, note that we can rewrite this as 
\begin{align}
    \notag \int_{\Bbb R^n } \Bbb E & \bigg[ \prod_{j=1}^n Z_{0,t}^\epsilon (x_j, y_j) \bigg] \prod_{j=1}^n \phi(y_j) dy\cdots dy_n \\&= \notag \mathbf E_{\mathrm{BM}^{\otimes n}}^{(x_1,...,x_n)} \bigg[ e^{\epsilon^{2p-\frac12} \sum_{1\le i<j\le n} \Phi_\epsilon^{(2p-2)}(W^i_t-W^j_t) -\int_0^t \Phi_\epsilon^{(2p-1)}(W^i_s-W^j_s) d(W^i-W^j)_s } \prod_{j=1}^n \phi( W^j_t )\bigg] \\&= \mathbf E^{(x_1,...,x_n)}_{\mathrm{Diff}(\epsilon)} \bigg[e^{ \mathbf T^\epsilon_t(X)+\sum_{1\le i<j\le n} \epsilon^{2p-\frac12}\Phi_\epsilon^{(2p-2)}(X^i_t-X^j_t) +\epsilon^{4p-1} \int_0^t \Phi_\epsilon^{(2p-1)}(X^i_s-X^j_s)^2 ds } \prod_{j=1}^n \phi( X^j_t) \bigg] \label{apx}
\end{align}
where by Girsanov's theorem the latter expectation is with respect to the \textbf{diffusion process} on $\Bbb R^n$, started from initial condition $(x_1,...,x_n)$, with dynamics given by $$dX^i = \epsilon^{2p-\frac12} \sum_{j\ne i} \mathrm{sign}(i-j) \Phi_\epsilon^{(2p-1)}(X^i-X^j)ds + dW^i,$$ and where 
\begin{align*}\mathbf T_t^\epsilon(X)&:= \frac12 \epsilon^{4p-1}\sum_{i=1}^n \sum_{\substack{j_1,j_2\neq i\\ j_1\ne j_2}} \int_0^t \mathrm{sign}(i-j_1)\mathrm{sign}(i-j_2) \prod_{k=1,2} \Phi^{(2p-1)}_\epsilon(X^i_s -X^{j_k}_s) ds.%\\&=\epsilon^{4p-1}\sum_{i=1}^n \sum_{\substack{j_1,j_2\neq i\\ j_1\ne j_2}} \int_0^t \prod_{k=1,2} \Phi^{(2p-1)}_\epsilon(X^i_s -X^{j_k}_s) ds. 
\end{align*}
\begin{defn}
    We say that a family of functions $u_\epsilon: \Bbb R\to \Bbb [0,\infty)$ with $0<\epsilon\le 1$ is an approximate Dirac mass if $u_\epsilon(y) = \delta^{-1}\psi(\delta^{-1}y)$ for some continuous function $\psi$ of compect support with $\int_\Bbb R\psi=1$.
\end{defn}

Notice that the function $\epsilon^{4p-1} (\Phi_\epsilon^{(2p-1)})^2$ appears in the exponential in \eqref{apx}. %, and which comes from the quadratic variation term in the exponential martingale used to apply Girsanov. 
This family of functions is an approximate Dirac mass centered at the origin, multiplied by the deterministic constant $\sigma_p^2 := \int_\Bbb R (\varphi * \varphi^{(2p-1)}(x))^2dx.$ Meanwhile the process $\mathbf T^\epsilon_t(X)$ will be shown to be irrelevant in the limit, which is because it only depends on ``triple intersections" between the different coordinates $X^1,...,X^n$. These are the the crucial observations in proving Theorem \ref{mr2}.

\begin{lem}\label{CAF}
    Let $\mathbf E_{\mathrm{BM}}^x$ denote expectation with respect to a standard Brownian motion on $\Bbb R$, started from $x\in\Bbb R$. There exists some universal constant $C>0$, such that uniformly over all smooth functions $\phi\ge 0$ with $\int_{\Bbb R} \phi =1,$ and all $s,t ,q\ge 0$, one has 
    \begin{align*}
        \sup_{x\in\Bbb R} \mathbf E^x_{\mathrm{BM}}\bigg[\bigg|\int_s^t \phi(B_s)ds\bigg|^q\bigg]^{1/q} &\leq Cq^{1/2} |t-s|^{1/2}\\
        \sup_{x\in\Bbb R} \mathbf E^x_{\mathrm{BM}}[e^{q \int_0^t \phi(B_s)ds}] &\leq Ce^{Cq^2 t}.
    \end{align*}
\end{lem}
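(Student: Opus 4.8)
The first bound is a moment estimate on an additive functional of Brownian motion; the second is an exponential moment of the same functional. The natural strategy is to prove the first bound and then bootstrap to the second via a series expansion. For the first bound, I would start by reducing to $s=0$ by the Markov property (the supremum over starting points makes this harmless), and then expand the $q$-th moment — assuming first that $q$ is a positive integer — as
\begin{align*}
\mathbf E^x_{\mathrm{BM}}\bigg[\bigg(\int_0^t \phi(B_r)dr\bigg)^q\bigg] &= q!\int_{0<r_1<\cdots<r_q<t} \mathbf E^x_{\mathrm{BM}}\bigg[\prod_{k=1}^q \phi(B_{r_k})\bigg] dr_1\cdots dr_q.
\end{align*}
Using the heat kernel $p_r$ and the Chapman-Kolmogorov relation, the inner expectation is a nested convolution $\int \prod_k p_{r_k-r_{k-1}}(y_{k-1},y_k)\phi(y_k)dy_k$, and since $\phi\ge 0$ with $\int\phi=1$ each integration in the innermost variable $y_q$ gives a factor bounded by $\sup_y p_{r_q-r_{q-1}}(y)\le C(r_q-r_{q-1})^{-1/2}$; iterating, one is left with $\prod_k (r_k-r_{k-1})^{-1/2}$ integrated over the simplex, which evaluates to $C^q t^{q/2}/\Gamma(q/2+1)$ by a Beta-integral computation. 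Together with the $q!$ out front and Stirling, this gives a bound of order $(Cq^{1/2})^q t^{q/2}$, i.e. $\mathbf E^x[(\int_0^t\phi(B))^q]^{1/q}\le Cq^{1/2}t^{1/2}$ as claimed. For general real $q\ge 1$, interpolate (Jensen / Lyapunov) between consecutive integers, or simply note $q\le \lceil q\rceil \le 2q$.

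For the second (exponential) bound, expand $e^{q\int_0^t\phi(B_r)dr} = \sum_{m\ge 0} \frac{q^m}{m!}(\int_0^t\phi(B_r)dr)^m$ and take expectations term by term (everything is nonnegative, so Tonelli applies). Plugging in the integer-$m$ case of the first bound — more precisely the simplex estimate $\mathbf E^x[(\int_0^t\phi(B))^m]\le m!\cdot C^m t^{m/2}/\Gamma(m/2+1)$ — one gets
\begin{align*}
\mathbf E^x_{\mathrm{BM}}\big[e^{q\int_0^t\phi(B_r)dr}\big] &\le \sum_{m\ge 0} \frac{(Cq)^m t^{m/2}}{\Gamma(m/2+1)}.
\end{align*}
The sum $\sum_m a^m/\Gamma(m/2+1)$ is a standard entire function of $a=Cq\sqrt t$ comparable to $e^{a^2}$ (split into even and odd $m$: the even part is $\sum_k (a^2)^k/k! = e^{a^2}$, the odd part is $a\sum_k (a^2)^k/\Gamma(k+3/2)$, also $\le C e^{a^2}$), giving $\mathbf E^x[e^{q\int_0^t\phi(B)}]\le C e^{C q^2 t}$ uniformly in $x$, $\phi$, $t$.

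The only genuinely delicate point is making the convolution/heat-kernel bound $\sup_y \int p_{r}(x,y)\phi(y)\,\mathrm{something}\,dy$ clean: one must use $\phi\ge 0$ and $\int\phi=1$ at the right step so that each integration against $\phi$ contributes a factor $1$ while each heat-kernel step contributes the singular factor $(r_k-r_{k-1})^{-1/2}$, and these singularities must be integrable over the simplex — which they are, since each exponent is $1/2<1$ and the Beta integral $\int_{\Delta_q}\prod(r_k-r_{k-1})^{-1/2}$ converges with the stated value. Everything else is bookkeeping: uniformity in $x$ is immediate since all bounds are translation-invariant and independent of the starting point, and uniformity in $\phi$ holds because only $\int\phi=1$ and $\phi\ge0$ were used. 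I would present the integer-$q$ simplex estimate as the main computation and then deduce both displayed inequalities from it.
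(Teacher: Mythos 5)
Your proposal is correct and follows essentially the same route as the paper: expand the $m$-th moment over the time-ordered simplex via the Markov property, bound each heat-kernel integration against $\phi$ by $C(r_k-r_{k-1})^{-1/2}$ using only $\phi\ge 0$ and $\int\phi=1$, evaluate the resulting Beta-type integral over the simplex, and sum the moment bounds (after dividing by $m!$) to get the exponential estimate. You spell out the $\Gamma$-function bookkeeping, the non-integer-$q$ interpolation, and the comparison of $\sum_m a^m/\Gamma(m/2+1)$ to $e^{a^2}$ in slightly more detail than the paper does, but the underlying argument is identical.
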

\begin{proof}
    First note the trivial bound $\sup_{x\in \Bbb R} \mathbf E^x_{\mathrm{BM}}[\phi(B_t)] \leq c\|\phi\|_{L^1(\Bbb R)} t^{-1/2},$ where $c= 1/\sqrt{2\pi}$ is universal. Thus if $\|\phi\|_{L^1(\Bbb R)}=1$ and $m\in\Bbb N$, we can iteratively apply the Markov property of Brownian motion $m$ times to see that 
    \begin{align*}\mathbf E^x_{\mathrm{BM}}\bigg[\bigg( \int_s^t \phi(B_s)ds\bigg)^m \bigg] &= m! \int_{s\le s_1\le s_2...\le s_m\le t} \Bbb E \bigg[ \prod_{j=1}^m \phi(B_{s_j}) \bigg] ds_1\cdots ds_m \\ &\leq m! c^m \int_{s\le s_1\le s_2...\le s_m\le t}\prod_{j=0}^{m-1} (s_{i+1}-s_i)^{-1/2} ds_1\cdots ds_m \\&\leq C^m (t-s)^{m/2} \Gamma(m/2), 
    \end{align*}
    where $\Gamma$ is the Gamma function, and $C\ge c$ is some larger but still universal constant. From here, the first bound is immediate. For the second bound, take $s=0$, then divide both sides by $m!$ and sum over all $m$ to obtain the required bound. 
\end{proof}

\begin{prop}[Limit of the moments]\label{lim}
    Let $\phi: \Bbb R\to \Bbb R$ be bounded and continuous. For all $n\in\Bbb N,$ and all $(x_1,...,x_n)\in\Bbb R^n$, we have $$\lim_{\epsilon\to 0} \int_{\Bbb R^n } \Bbb E \bigg[ \prod_{j=1}^n Z_{0,t}^\epsilon (x_j, y_j) \bigg] \prod_{j=1}^n \phi(y_j) dy\cdots dy_n = \mathbf E_{\mathrm{BM}^{\otimes n}}^{(x_1,...,x_n)} \bigg[ e^{\sigma_p^2  \sum_{1\le i<j\le n} L_0^{W^i-W^j}(t) } \prod_{j=1}^n \phi(W^j_t)\bigg] .$$
    Here $\sigma_p^2 = \int_\Bbb R (\varphi * \varphi^{(2p-1)}(x))^2dx,$ with $\varphi$ as in Theorem \ref{mr2}. The above convergence is uniform on compact subsets of $(x_1,...,x_n)$.
\end{prop}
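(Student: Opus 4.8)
The plan is to start from the change-of-measure representation written out right after Proposition \ref{3.1}: after the It\^o expansion and the Girsanov transformation performed there, and abbreviating $\vec x=(x_1,\dots,x_n)$, it remains to show that
\[
\mathbf E^{\vec x}_{\mathrm{Diff}(\epsilon)}\Big[e^{\,\mathbf T^\epsilon_t(X)+\sum_{i<j}\epsilon^{2p-\frac12}\Phi_\epsilon^{(2p-2)}(X^i_t-X^j_t)+\sum_{i<j}\epsilon^{4p-1}\int_0^t\Phi_\epsilon^{(2p-1)}(X^i_s-X^j_s)^2\,ds}\prod_{j=1}^n\phi(X^j_t)\Big]
\]
converges to $\mathbf E^{\vec x}_{\mathrm{BM}^{\otimes n}}[e^{\sigma_p^2\sum_{i<j}L_0^{W^i-W^j}(t)}\prod_j\phi(W^j_t)]$. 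Write $K:=\varphi*\varphi^{(2p-1)}=(\varphi*\varphi)^{(2p-1)}$, which is odd, compactly supported, with $\int_\Bbb R K=0$ and $\|K\|_{L^2(\Bbb R)}^2=\sigma_p^2$. Since $\Phi_\epsilon(z)=\epsilon^{-1}(\varphi*\varphi)(\epsilon^{-1}z)$ one has the exact identities $\epsilon^{2p-\frac12}\Phi_\epsilon^{(2p-1)}(z)=\epsilon^{-1/2}K(\epsilon^{-1}z)$, $\ \epsilon^{4p-1}\Phi_\epsilon^{(2p-1)}(z)^2=\epsilon^{-1}K(\epsilon^{-1}z)^2$, and $\epsilon^{2p-\frac12}\Phi_\epsilon^{(2p-2)}(z)=\epsilon^{1/2}(\varphi*\varphi)^{(2p-2)}(\epsilon^{-1}z)$. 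In particular the drift of $X=X^\epsilon$ obeys $|b_\epsilon^i(x)|\le\sum_{j\ne i}\epsilon^{-1/2}|K|(\epsilon^{-1}(x_i-x_j))$, the boundary term in the exponent has deterministic sup norm $O(\epsilon^{1/2})$, and the last term equals $C_\epsilon:=\sum_{i<j}\int_0^t\epsilon^{-1}g(\epsilon^{-1}(X^i_s-X^j_s))\,ds$ with $g:=K^2\ge0$ and $\int_\Bbb R g=\sigma_p^2$, while $\mathbf T^\epsilon_t(X)$ is a finite sum of occupation integrals of products of two such kernels sharing one coordinate.

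\emph{Step 1 (negligible drift; $X^\epsilon\Rightarrow W$).} Because $\epsilon^{-1/2}|K|(\epsilon^{-1}\cdot)=\epsilon^{1/2}\big(\epsilon^{-1}|K|(\epsilon^{-1}\cdot)\big)$ and $\epsilon^{-1}|K|(\epsilon^{-1}\cdot)$ is an $L^1$-approximate identity, Lemma \ref{CAF} — applied to $X^i-X^j$, which after passing to the Brownian law $\mathbf P^W$ (via the uniform density bound of Step 4) equals $\sqrt2$ times a standard Brownian motion — yields $\mathbf E^{\vec x}_{\mathrm{Diff}(\epsilon)}\big[\big(\int_0^t|b_\epsilon(X_s)|\,ds\big)^2\big]^{1/2}\le C\epsilon^{1/2}\to0$, uniformly for $\vec x$ on compacts. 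Hence $\sup_{r\le t}\big|\int_0^r b_\epsilon(X_s)\,ds\big|\to0$ in $L^2$, so $X^\epsilon$ is tight in $C([0,t],\Bbb R^n)$ and converges in law to the $n$-dimensional Brownian motion $W$ started at $\vec x$, while the $O(\epsilon^{1/2})$ boundary term contributes a factor tending to $1$.

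\emph{Step 2 ($\mathbf T^\epsilon_t(X)\to0$ in probability).} The integrand of $\mathbf T^\epsilon_t$ is supported on times $s$ at which one coordinate $X^i_s$ lies within $O(\epsilon)$ of two distinct others, so after crudely bounding $|K|$ it suffices to control $\epsilon^{-1}\int_0^t|K|(\epsilon^{-1}(X^i-X^{j_1})_s)\,|K|(\epsilon^{-1}(X^i-X^{j_2})_s)\,ds$. The pair $\big((X^i-X^{j_1})_s,(X^i-X^{j_2})_s\big)$ is a non-degenerate $\Bbb R^2$-valued diffusion; using a Gaussian upper bound of order $s^{-1}$ for its transition density (uniform in $\epsilon$ after passing to $\mathbf P^W$), the expectation of the above integral is $\lesssim\epsilon|\log\epsilon|\to0$, splitting the time integral at $s=\epsilon^2$ to accommodate possibly coinciding starting points; analogous second-moment estimates give convergence in probability, uniformly for $\vec x$ on compacts. \emph{Step 3 ($C_\epsilon$ converges to the intersection local time).} Since $g\ge0$ is an $L^1$-approximate identity of mass $\sigma_p^2$, each summand of $C_\epsilon$ is an approximate local time at $0$ of the semimartingale $X^i-X^j$. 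Jointly with $X^\epsilon\Rightarrow W$, a standard argument — comparing $C_\epsilon$ with the genuinely continuous path functional $\sum_{i<j}\int_0^t\epsilon_0^{-1}g(\epsilon_0^{-1}(X^i_s-X^j_s))\,ds$ for a fixed small $\epsilon_0>0$, to which the continuous mapping theorem applies, estimating the difference in $L^2$ uniformly in $\epsilon$ via Lemma \ref{CAF} and the density bound of Step 4, then letting $\epsilon_0\to0$ — yields $(X^\epsilon,C_\epsilon)\Rightarrow\big(W,\,\sigma_p^2\sum_{i<j}L_0^{W^i-W^j}(t)\big)$, with the local time normalized exactly as in Condition (3) of Proposition \ref{mr} and in \cite{BG97}. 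The normalization matches automatically because $g/\sigma_p^2$ is a smooth compactly supported probability density, and for any unit-mass mollifier $\rho$ one has $\lim_{\epsilon\to0}\int_0^t\epsilon^{-1}\rho(\epsilon^{-1}(W^i-W^j)_s)\,ds=L_0^{W^i-W^j}(t)$, which is exactly the intersection local time entering the Feynman--Kac moment formula for the standard \eqref{she}.

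\emph{Step 4 (uniform integrability and conclusion).} Steps 1--3 give $\big(X^\epsilon,\mathbf T^\epsilon_t(X),\text{boundary term},C_\epsilon\big)\Rightarrow\big(W,0,0,\sigma_p^2\sum_{i<j}L_0^{W^i-W^j}(t)\big)$; since the $\phi$'s are bounded and continuous, the stated convergence of moments follows once we exhibit a uniform bound $\sup_\epsilon\mathbf E^{\vec x}_{\mathrm{Diff}(\epsilon)}[e^{(1+\delta)(\mathbf T^\epsilon_t(X)+\text{boundary}+C_\epsilon)}]<\infty$ for some $\delta>0$, uniformly for $\vec x$ on compacts. Every term in the exponent is a finite sum of occupation integrals of $L^1$-approximate identities of bounded mass; writing the expectation against $\mathbf P^W$ through the Girsanov density $d\mathbf P^{\mathrm{Diff}(\epsilon)}/d\mathbf P^W=\mathcal E^{b_\epsilon}_t$, bounding its $L^2(\mathbf P^W)$-norm by Cauchy--Schwarz in terms of $\mathbf E^W[e^{2\int_0^t|b_\epsilon(W_s)|^2ds}]$, and using $\int_0^t|b_\epsilon(W_s)|^2\,ds\le 2n\sum_{i<j}\int_0^t\epsilon^{-1}K^2(\epsilon^{-1}(W^i-W^j)_s)\,ds$, the second (exponential) estimate of Lemma \ref{CAF} controls every quantity; a final H\"older argument transfers the exponential moment back under $\mathbf P^{\mathrm{Diff}(\epsilon)}$, uniformly in $\epsilon$, and this same uniformity yields the asserted local uniformity in $(x_1,\dots,x_n)$. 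The main obstacle is Step 3: one must upgrade the convergence of the approximate local times $C_\epsilon$ so that it holds \emph{jointly} with the weak convergence of the diffusions $X^\epsilon$ — which carry a vanishing but genuinely singular (amplitude $\epsilon^{-1/2}$) drift — while reconciling the local-time normalization with Condition (3) of Proposition \ref{mr}; Steps 1, 2 and 4, though technical, are routine consequences of Lemma \ref{CAF} together with elementary change-of-measure bookkeeping.
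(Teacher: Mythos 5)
Your proposal starts from the same post-Girsanov representation as the paper (the display after Proposition \ref{3.1}), kills the boundary term by the same deterministic $O(\epsilon^{1/2})$ bound, proves the same drift estimate as \eqref{addfct}, and obtains uniform integrability by exactly the paper's mechanism: an $L^2(\mathbf P^W)$ bound on the Girsanov density via Novikov/Cauchy--Schwarz plus the exponential estimate of Lemma \ref{CAF}, i.e.\ the inequality \eqref{sis}. Where you genuinely depart from the paper is in the two identification steps. For the triple-interaction term $\mathbf T^\epsilon$ you give a quantitative first/second-moment bound of order $\epsilon|\log\epsilon|$ using the $s^{-1}$ decay of the two-dimensional transition density of $(X^i-X^{j_1},X^i-X^{j_2})$ (transferred to the Brownian law), whereas the paper argues softly: it dominates $\mathbf T^\epsilon$ by an increasing process, proves tightness, and identifies the limit as $0$ because three-dimensional Brownian motion has no triple collisions. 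For the occupation functional $C_\epsilon$ you use the classical mollified-local-time approximation (compare with a fixed-$\epsilon_0$ continuous functional, apply the continuous mapping theorem jointly with $X^\epsilon\Rightarrow W$, then let $\epsilon_0\to0$), whereas the paper introduces the corrector $\Psi_\epsilon$ solving $\Psi_\epsilon''=\epsilon^{4p-1}(\Phi_\epsilon^{(2p-1)})^2$, applies Dynkin's formula, passes martingality to the limit, and identifies the limit via Tanaka's formula. Both of your substitutions are viable; the paper's corrector/Tanaka route has the advantage that it needs only martingale convergence plus uniform integrability and no quantitative regularity of local time, which is why it is shorter at that point.

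The one concrete place where your sketch under-delivers is the uniform-in-$\epsilon$ $L^2$ comparison in Step 3. Lemma \ref{CAF} gives only uniform \emph{boundedness} of each occupation integral $\int_0^t\rho_\epsilon(Y_s)\,ds$; by the triangle inequality it therefore yields an $O(1)$ bound on $C_\epsilon-F_{\epsilon_0}(X^\epsilon)$, not the smallness as $\epsilon,\epsilon_0\to0$ that your three-step approximation argument requires. You need an additional (standard, but not stated) Cauchy estimate for mollified occupation functionals under the Brownian law, e.g.\ writing $\int_0^t\rho_\epsilon((W^i-W^j)_s)\,ds=\int_{\Bbb R}\rho_\epsilon(a)\,\ell^a_t\,da$ for the occupation density $\ell$ of $W^i-W^j$ and using $\mathbf E[|\ell^a_t-\ell^0_t|^{2k}]^{1/(2k)}\le C_k|a|^{1/2}$, which gives an $O(\epsilon^{1/2}+\epsilon_0^{1/2})$ bound in every $L^p$; this then transfers to $\mathbf E_{\mathrm{Diff}(\epsilon)}$ by \eqref{sis} exactly as you indicate. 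Once that estimate is inserted, your Step 3 closes, and with it the whole argument; you should also state explicitly, when invoking ``$L_0$'', that the object you construct is the $ds$-occupation density of $W^i-W^j$ at $0$ scaled by $\sigma_p^2$ (the quantity appearing in the Feynman--Kac formula and in Condition (3) of Proposition \ref{mr}), so that no spurious factor of $2$ enters when comparing with the Tanaka-normalized semimartingale local time.
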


\begin{proof}
    Using the diffusion representation explained after Proposition \ref{3.1} above, we just need to calculate the limit as $\epsilon\to 0$ of the expectation \begin{equation}\label{limit}\lim_{\epsilon\to 0} \mathbf E^{(x_1,...,x_n)}_{\mathrm{Diff}(\epsilon)} \bigg[e^{ \mathbf T_t^\epsilon(X)+\sum_{1\le i<j\le n} \epsilon^{2p-\frac12}\Phi_\epsilon^{(2p-2)}(X^i_t-X^j_t) +\epsilon^{4p-1} \int_0^t \Phi_\epsilon^{(2p-1)}(X^i_s-X^j_s)^2 ds } \prod_{j=1}^n \phi(X^j_t) \bigg].\end{equation}
    On one hand, one has a deterministic bound $\epsilon^{2p-\frac12}|\Phi_\epsilon^{(2p-2)}|\leq \epsilon^{1/2}$, thus the first term in the exponential $\epsilon^{2p-\frac12}\Phi_\epsilon^{(2p-2)}(X^i_t-X^j_t)$ is completely inconsequential in the limit, and will be ignored henceforth. To take the limit of the remaining parts, it is clear that we require a joint invariance principle for the entire collection of processes given by $$\bigg( \big(X^1_t,...,X^n_t\big) , \;\;\big( \epsilon^{4p-1}  \int_0^t \Phi_\epsilon^{(2p-1)}(X^i_s-X^j_s)^2 ds\big)_{1 \le i<j\le n},\;\; \mathbf T_t(X)\bigg)_{t\in [0,T]},$$
    under the measures $\mathbf E^{(x_1,...,x_n)}_{\mathrm{Diff}(\epsilon)}$ as $\epsilon\to 0$. Specifically, we will now show that these processes have a \textbf{joint} limit in distribution given by $$\bigg( \big( W^1_t,..., W^n_t\big), \big( \sigma_p^2 L_0^{W^i-W^j}(t)\big)_{1\le i<j\le n},\;\;\;0\bigg)_{t\in [0,T]} ,$$ where $(W^1,...,W^n)$ is a standard Wiener process started from $(x_1,...,x_n)$ and $L^X_0$ is the local time process of the semimartingale $X$. The convergence in distribution is with respect to the topology of $C[0,T]$ in each coordinate (that is, uniform convergence). To prove this, we will first focus on  $(X^1,...,X^n)$ before studying the remaining parts.
    
    For any continuous martingale $G$, if $\Bbb E [ e^{ 2\langle G\rangle_t} ]<\infty$ for all $t\ge 0$, then by Novikov's condition $e^{ 2G_t-2\langle G\rangle_t} $ is a martingale, thus we have the bound $$\Bbb E[e^{G_t}] = \Bbb E[ e^{G_t- \langle G\rangle_t +\langle G \rangle_t}] \leq \Bbb E[e^{2G_t -2\langle G\rangle_t }]^{1/2} \Bbb E [ e^{ 2\langle G\rangle_t} ]^{1/2} = \Bbb E [ e^{ 2\langle G\rangle_t} ]^{1/2}.$$
    Thus if we let $\mathbf E_{\mathrm{BM}}^x$ denote expectation with respect to a standard Brownian motion $B$ on $\Bbb R$, started from $x\in\Bbb R$, then for any $q\ge 1$ one has by Lemma \ref{CAF} that $$\sup_{\substack{\epsilon\in (0,1]\\x\in\Bbb R}} \mathbf E_{\mathrm{BM}}^x \big[ e^{ q \epsilon^{2p-\frac12} \int_0^T \Phi_\epsilon^{(2p-1)}(B_s)dB_s} \big] \leq \sup_{\substack{\epsilon\in (0,1]\\x\in\Bbb R}} \mathbf E_{\mathrm{BM}}^x[ e^{2q^2\epsilon^{4p-1} \int_0^T \Phi_\epsilon^{(2p-1)}(B_s)^2 ds} \big]^{1/2} \le Ce^{\sigma_p^2q^4 CT} <\infty,$$ where $C$ is some universal constant. By H\"older's inequality and Girsanov's theorem, the last expression easily implies that if $F: C[0,T]\to\Bbb R$ is any measurable path functional, then \begin{equation}
        \label{sis} \mathbf E^{(x_1,...,x_n)}_{\mathrm{Diff}(\epsilon)} [ F(X)] \le C \mathbf E_{\mathrm{BM}^{\otimes n}}^{(x_1,...,x_n)} [ F(W) ^{2}]^{1/2}
    \end{equation}where $C$ is an \textit{absolute constant} independent of $\epsilon\in (0,1]$ and $F$ (specifically $C$ is an upper bound, uniform over all $\epsilon$, for the $L^2$ norm of the Radon-Nikodym derivative of $X$ with respect to $W$). This will be a very useful fact going forward. 
    
    Letting $F_\alpha(f)$ denote the $\alpha$-Holder norm of $f$ on $[0,T]$, where $\alpha<1/2$, note by \eqref{sis} and Fernique's theorem that we immediately obtain tightness of the processes $(X^1,...,X^n)$ in $C[0,T]$, under $\mathbf E^{(x_1,...,x_n)}_{\mathrm{Diff}(\epsilon)}$ as $\epsilon\to 0$. Now we wish to identify the limit point as Brownian motion, for which we will use Levy's criterion. By Dynkin's formula, note that 
    \begin{align*}M^i_t&:= X^i_t - \epsilon^{2p-\frac12} \sum_{j\ne i} \mathrm{sign}(i-j)\int_0^t \Phi_\epsilon^{(2p-1)}(X^i_s-X^j_s)ds \\ Q^{ij}_t& = M^i_tM^j_t -\delta_{ij} t
    \end{align*}
    are both continuous $\mathbf E^{(x_1,...,x_n)}_{\mathrm{Diff}(\epsilon)}$-martingales, for $\epsilon>0$. Indeed, martingality of $M^i$ follows from considering the first-order part of the generators, while martingality of $Q^{ij}$ comes from looking at the second-order part which is just the Laplacian.
    
    Using that $\epsilon^{2p-1} |\Phi_\epsilon^{(2p-1)}|$ is an approximate Dirac mass, Lemma \ref{CAF} yields $$\mathbf E_{\mathrm{BM}^{\otimes n}}^{(x_1,...,x_n)} \big[ \big| \int_s^t \epsilon^{2p-1}| \Phi_\epsilon^{(2p-1)}(W^i_u-W^j_u)|du\big|^q]^{1/q} \leq C_q(t-s)^{1/2},$$ where $C_q$ does not depend on $\epsilon$. Thus by \eqref{sis}, we obtain that \begin{equation}\label{addfct}\epsilon^{2p-\frac12} \mathbf E^{(x_1,...,x_n)}_{\mathrm{Diff}(\epsilon)} \bigg[  \bigg| \int_s^t |\Phi_\epsilon^{(2p-1)}(X^i_u-X^j_u)|du\bigg|^q\bigg]^{1/q} \leq C_q \epsilon^{1/2} |t-s|^{1/2}\end{equation}
    which implies that under $\mathbf E^{(x_1,...,x_n)}_{\mathrm{Diff}(\epsilon)},$ the processes $\epsilon^{2p-\frac12} \int_0^t \Phi_\epsilon^{(2p-1)}(X^i_s-X^j_s)ds$ tend to 0 in law as $\epsilon\to 0$, in the topology of $C[0,T]$. Thus for any limit point $(W^1,...,W^n)$ of the processes $(X^1,...,X^n)$ viewed under $\mathbf E^{(x_1,...,x_n)}_{\mathrm{Diff}(\epsilon)}$ as $\epsilon\to 0$, the processes $W^i_t$ and $W^i_tW^j_t-\delta_{i,j}t$ are both continuous martingales. This is because martingality is preserved by limit points as long as uniform integrability holds, and uniform integrability \textit{does} hold thanks to \eqref{sis}. By Levy's criterion, this identifies the limit point $(W^1,...,W^n)$ as a standard Brownian motion in $\Bbb R^n$. 

    It remains to study the processes $\big( \epsilon^{4p-1}  \int_0^t \Phi_\epsilon^{(2p-1)}(X^i_s-X^j_s)^2 ds\big)_{1 \le i<j\le n,\; 0\le t \le T}$ as $\epsilon\to 0$, if we take a joint limit in distribution with $(X^1,...,X^n)$ under the measures $\mathbf E^{(x_1,...,x_n)}_{\mathrm{Diff}(\epsilon)}$. Define the functions $\Psi_\epsilon:\Bbb R\to \Bbb R$ to be the unique solution of the second-order ODE given by $$\Psi_\epsilon '' = \epsilon^{4p-1} (\Phi_\epsilon^{(2p-1)})^2 %+ \epsilon^{2p-\frac12} \Phi_\epsilon^{(2p-1)} \Psi_\epsilon'
    ,\;\;\;\;\;\;\;\;\; \lim_{y\to-\infty} \Psi_{\epsilon}(y)=0. $$
    %The ODE can be solved in terms of $\Phi_\epsilon$ using an integrating factor. 
    Using the fact that $\epsilon^{4p-1} (\Phi_\epsilon^{(2p-1)})^2$ is $\sigma_p^2$ times an approximate Dirac mass, %, and $\epsilon^{2p-\frac12}|\Phi_\epsilon^{(2p-1)}|$ is $O(\epsilon^{1/2})$ times an approximate Dirac mass, one can show 
    it is clear that $\Psi_\epsilon$ converges uniformly on compacts to the function $x\mapsto 2\sigma_p^2 \max\{0,x\},$ as $\epsilon\to 0.$ On the other hand, letting $\Lambda^{ij}_\epsilon(x_1,...,x_n):= \epsilon^{2p-1} \sum_{1\le k<\ell \le n} \Phi_\epsilon^{(2p-1)}(x_k-x_\ell)\cdot \big( \partial_k-\partial_\ell\big) \big(\Psi_\epsilon(x_i-x_j)\big)$, Dynkin's formula implies that the processes $$D^{ij}_t := \Psi_\epsilon(X_t^i-X_t^j) - \epsilon^{4p-1} \int_0^t \Phi_\epsilon^{(2p-1)}(X^i_s-X^j_s)^2 ds - \epsilon^{\frac12} \int_0^t \Lambda^{ij}_\epsilon(X^1_s,...,X^n_s)ds.$$ are martingales for $1\le i<j\le n$. Under the measures $\mathbf E^{(x_1,...,x_n)}_{\mathrm{Diff}(\epsilon)}$, the middle process on the right side given by $\epsilon^{4p-1} \int_0^\bullet \Phi_\epsilon^{(2p-1)}(X^i_s-X^j_s)^2 ds$ may be shown to be tight in $C[0,T]$ with uniform-in-$\epsilon$ $L^p$ bounds, using the exact same types of arguments as we used to prove \eqref{addfct}, first proving the bound for Brownian motion and then leveraging \eqref{sis}. Likewise since $|\Psi_\epsilon'|$ is deterministically bounded independently of $\epsilon\in(0,1]$, and since $\epsilon^{2p-1}|\Phi_\epsilon^{(2p-1)}|$ is an approximate Dirac mass, the processes $\int_0^\bullet \Lambda^{ij}_\epsilon(\vec X_s)ds$ are tight in $C[0,T]$ by the same argument, and therefore the last term on the right side converges to zero in the topology of $C[0,T]$ thanks to the multiplying factor $\epsilon^{1/2}$.
    
    Let $(W^1,...,W^n, (\mathscr V^{ij})_{1\le i<j\le n})$ be a joint limit point of the entire tuple of $\frac12n(n+1)$ processes given by $\big( (X^1_t,...,X^n_t) , ( \epsilon^{4p-1}  \int_0^t \Phi_\epsilon^{(2p-1)}(X^i_s-X^j_s)^2 ds\big)_{1 \le i<j\le n}\big)_{t\in [0,T]}$, viewed under $\mathbf E^{(x_1,...,x_n)}_{\mathrm{Diff}(\epsilon)}$ as $\epsilon\to 0$. We already proved that the marginal law of $(W^1,...,W^n)$ must be a Brownian motion. Since martingality is preserved by limit points as long as uniform integrability holds, the martingality of $D^{ij}$ in the prelimit implies that $2\sigma_p^2 \max\{0, W^i_t-W^j_t\} - \mathscr V^{ij}_t$ is a continuous martingale for the limit point. By Tanaka's formula, this forces $\mathscr V^{ij}_t= \sigma_p^2 L_0^{W^i-W^j}(t)$, for \textbf{all} $1\le i<j\le n,$ thus uniquely identifying the limit point of the full tuple of $\frac12n(n+1)$ processes.% and completing the proof of the invariance principle.

    Now we need to deal with the process $\mathbf T_t(X)$, and show that it converges to 0 in the topology of $C[0,T]$, viewed under $\mathbf E^{(x_1,...,x_n)}_{\mathrm{Diff}(\epsilon)}$ as $\epsilon\to 0$. Note that the increments of $\mathbf T_t(X)$ are deterministically upper-bounded by the increments of $\mathbf S_t(X)$ where $$\mathbf S_t^\epsilon(X):= \epsilon^{4p-1}\sum_{i=1}^n \sum_{\substack{j_1,j_2\neq i\\ j_1\ne j_2}} \int_0^t \prod_{k=1,2} |\Phi^{(2p-1)}_\epsilon(X^i_s -X^{j_k}_s)| ds. $$
    Use the bound $$\prod_{k=1,2} |\Phi^{(2p-1)}_\epsilon(X^i_s -X^{j_k}_s)|\leq \sum_{k=1,2} \Phi^{(2p-1)}_\epsilon(X^i_s -X^{j_k}_s)^2$$
    and recall that we already showed above that processes given by $\epsilon^{4p-1} \int_0^\bullet \Phi_\epsilon^{(2p-1)}(X^i_s-X^j_s)^2 ds$ are tight in $C[0,T]$ with uniform-in-$\epsilon$ $L^p$ bounds. Thus we see that $\mathcal S_t^\epsilon(X)$ is also tight in $C[0,T]$ with uniform-in-$\epsilon$ $L^p$ bounds. It remains to identify the limit point as 0. Let $(W^1,...,W^n,S)$ be a joint limit point as $\epsilon\to 0$ of $(X^1,...,X^n,\mathbf S^\epsilon(X))$ then notice that $S$ is a continuous nondecreasing process that is constant on every open interval in the complement of the closed set where at least three of the coordinates $(W^1,...,W^n)$ are equal. Indeed $\mathbf S^\epsilon(X)$ can only increase on the set of $t$ where there exist three distinct indices $(i,j_1,j_2)$ such that $| X^i - X^{j_k} | < \epsilon$ for \textit{both} values of $k$; thus the limit point $S$ can only increase where there exist $ (i,j_1,j_2)$ such that $W^i = W^{j_k}$ for both $k$, thus three separate coordinates of the process $W$ must be equal for $S$ to be able to increase. But the marginal law of $(W^1,...,W^n)$ has been shown to be a Brownian motion, and it is known that a three-dimensional Brownian motion never returns to the origin, thus $S$ must be a \textit{constant} process. Thus $S=0,$ as desired.

    Finally, we take the limit in \eqref{limit}. Using the invariance principle and the uniform integrability guaranteed by \eqref{sis}, one has \begin{align*}\lim_{\epsilon\to 0} \mathbf E^{(x_1,...,x_n)}_{\mathrm{Diff}(\epsilon)} \bigg[&e^{ \mathbf T_t(X)+\sum_{1\le i<j\le n} \epsilon^{4p-1} \int_0^t \Phi_\epsilon^{(2p-1)}(X^i_s-X^j_s)^2 ds } \prod_{j=1}^n \phi(X^j_t) \bigg] \\&= \mathbf E_{\mathrm{BM}^{\otimes n}}^{(x_1,...,x_n)} \bigg[ e^{\sigma_p^2  \sum_{1\le i<j\le n} L_0^{W^i-W^j}(t) } \prod_{j=1}^n \phi(W^j_t)\bigg].
    \end{align*}
    The convergence is uniform on compacts because allowing $x_i=x_i^\epsilon$ to vary within some bounded set as $\epsilon\to 0$ will not affect the invariance principle or the uniform integrability.
\end{proof}

\begin{proof}[Proof of Theorem \ref{mr2}]

We wish to show convergence in law with respect to finite-dimensional laws in the (very weak) topology of $\mathcal M(\Bbb R^2)$. One may show that $\mathcal M(\Bbb R^2)$ with its vague topology is indeed a Polish space, so that tightness of some family of probability measures does imply existence of subsequential limit points. The tightness is actually immediate from Banach-Alaoglu and the fact that $\Bbb E[ (Z^\epsilon_{s,t},f)^n]$ remains bounded as $\epsilon \to 0$, for all $f\in C_c^0(\Bbb R^2)$ and $n\in\Bbb N$ and $s<t$. In turn that holds because of Proposition \ref{lim}.

Consider any limit point\footnote{$\Bbb Q$ here can be replaced by any countable dense set in $\Bbb R$, in particular any set that contains a fixed finite set of times.} of $(Z^\epsilon_{s,t})_{s,t \in \Bbb Q}$ as $\epsilon \to 0$, which is a probability measure on the product $\sigma$-algebra of $\mathcal M(\Bbb R^2)^{\{(s,t)\in\Bbb Q^2: s\le t\}}.$ Such limit points exist by Kolmogorov's extension theorem and a diagonal argument. Indeed one can take the projective limit of any consistent family of finite-dimensional marginal laws which are themselves limit points of the finite-dimensional marginals $(Z^\epsilon_{s_1,t_1},...,Z^\epsilon_{s_m,t_m})$ as $\epsilon\to 0$.

Now we need to show that limit point does indeed satisfy all conditions in Proposition \ref{mr}, which will uniquely identify that limit point. Condition (1) is satisfied because it is already true in the prelimit. Condition (3) holds by Proposition \ref{lim} and the uniform integrability implied by the same proposition.

It remains to prove Condition (2). Because the propagator equation \eqref{propa} holds in the prelimit, this will be straightforward. Specifically, given the the fact that $\Bbb E[ (Z^\epsilon_{s,t},f)^n]$ remains bounded as $\epsilon \to 0$ for all $f\in C_c(\Bbb R^2)$, we have uniform integrability, and now we just need to show that $$\limsup_{\delta\to 0} \limsup_{\epsilon\to 0} \Bbb E \bigg[ \bigg(\int_{\Bbb R^4 } f(x,z) \delta^{-1} \psi(\delta^{-1}(y-w))Z^\epsilon_{t,u}(dy,dz) Z^\epsilon_{s,t}(dx,dw) - \int_{\Bbb R^2} f(x,z) Z^\epsilon_{s,u}(dx,dz)\bigg)^2 \bigg]=0.$$
In the prelimit, it is already clear that $Z^\epsilon_{s_j,t_j}$ are independent if $(s_j,t_j)$ are disjoint. Thus for $s<t<u$, we claim that 
\begin{align}
    \notag \lim_{\epsilon\to 0} \;&\Bbb E \bigg[ \bigg(\int_{\Bbb R^4 } f(x,z) \delta^{-1} \psi(\delta^{-1}(y-w))Z^\epsilon_{t,u}(dy,dz) Z^\epsilon_{s,t}(dx,dw) - \int_{\Bbb R^2} f(x,z) Z^\epsilon_{s,u}(dx,dz)\bigg)^2 \bigg] \\&=  \mathbf E \bigg[ \bigg(\int_{\Bbb R^4 } f(x,z) \delta^{-1} \psi(\delta^{-1}(y-w))U_{t,u}(dy,dz) U_{s,t}(dx,dw) - \int_{\Bbb R^2} f(x,z) U_{s,u}(dx,dz)\bigg)^2 \bigg]\label{limmo}
\end{align}
    where $U_{s,t}$ are the propagators associated to the limiting equation (Equation \eqref{she} but with the noise coefficient of $\sigma_p$), not necessarily defined on the same probability space as $Z^\epsilon$. To prove \eqref{limmo}, multiply out the expectation and we obtain 
    \begin{align*}
        \bigg(\int_{\Bbb R^4 } &f(x,z) \delta^{-1} \psi(\delta^{-1}(y-w))Z^\epsilon_{t,u}(dy,dz) Z^\epsilon_{s,t}(dx,dw) - \int_{\Bbb R^2} f(x,z) Z^\epsilon_{s,u}(dx,dz)\bigg)^2 \\ &= \bigg( \int_{\Bbb R^4} f(x,z) \bigg[ \delta^{-1} \psi(\delta^{-1} (y-w)) - \delta_0(y-w) \bigg]Z^\epsilon_{t,u}(y,z) Z^\epsilon_{s,t}(x,w)dydzdxdw\bigg)^2 \\&= \int_{\Bbb R^8} \prod_{j=1,2} f(x_j,z_j) \bigg[ \delta^{-1}\psi(\delta^{-1} (y_j-w_j)) - \delta_0(y_j-w_j)\bigg] \prod_{j=1,2} Z^\epsilon_{t,u}(y_j,z_j) Z^\epsilon_{s,t}(x_j,w_j) dy_jdz_jdx_jdw_j,
    \end{align*} 
    where $\delta_0$ is the Dirac mass (which should not be confused with the real number $\delta>0$), and we used \eqref{propa} in the second line. Taking expectation of the above, we will obtain an expression of the form \begin{equation}\label{hn}\int_{\Bbb R^4}  \Bbb E \bigg[ \mathcal H_\epsilon(x_1,w_1,x_2,w_2) \prod_{j=1,2} \mathcal Z^N_{s,t} (x_j,w_j)dx_jdw_j \bigg] 
    \end{equation}
    where for $x_j,w_j \in \Bbb R$ one has $$\mathcal H_\epsilon(x_1,w_1,x_2,w_2):= \int_{\Bbb R^4} \prod_{j=1,2} f(x_j,z_j) \bigg[ \delta^{-1}\psi(\delta^{-1} (y_j-w_j)) - \delta_0(y_j-w_j)\bigg] \Bbb E\bigg[\prod_{j=1,2} Z^\epsilon_{t,u}(dy_j,dz_j)\bigg].$$
    It follows from Proposition \ref{lim} that, as $\epsilon\to 0$, the functions $\mathcal H_\epsilon$ are uniformly bounded and converge uniformly on compacts of $\Bbb R^4$ to the function $$\mathcal H_\infty (x_1, w_1,x_2,w_2):= \int_{\Bbb R^4} \prod_{j=1,2} f(x_j,z_j) \bigg[ \delta^{-1}\psi(\delta^{-1} (y_j-w_j)) - \delta_0(y_j-w_j)\bigg] \mathbf E\bigg[\prod_{j=1,2} U_{t,u}(y_j,z_j)\bigg]\prod_{j=1,2} dy_jdz_j,$$ where $U_{s,t}$ are the propagators for \eqref{she} (not necessarily defined on the same probability space). This limit holds thanks to the presence of the integral against the smooth bounded function $f$. 
    
    We can say therefore that \eqref{hn} converges as $\epsilon\to 0$ to the quantity 
    \begin{equation}\label{hn2}
        \int_{\Bbb R^4} \mathcal H_\infty (x_1,w_1,x_2,w_2) \mathbf E \bigg[ \prod_{j=1,2} U_{s,t} (x_j,w_j) \bigg] \prod_{j=1,2} dx_jdw_j\end{equation}
        which (by following the same logic as for the prelimit above) is the same as $$\mathbf E \bigg[ \bigg(\int_{\Bbb R^4 } f(x,z) \delta^{-1} \psi(\delta^{-1}(y-w)) U_{t,u}(y,z) U_{s,t}(x,w)dydzdxdw - \int_{\Bbb R^2} f(x,z) U_{s,u}(x,z)dxdz\bigg)^2 \bigg].
    $$ This proves \eqref{limmo}. Then taking $\delta\to 0$ in \eqref{limmo} and using the continuity and uniform moment bounds of $U_{s,t}$ yields the claim.
\end{proof}

\section{Proof of Theorem \ref{mr3}}\label{s4}

The initial discussion here will parallel the observations that were first made in \cite{dom} and \cite{DDP23}. Henceforth we consider the propagators $\mathcal Z^\epsilon_{s,t}(x,y)$ of the equation given in Theorem \ref{mr3}. Already, these almost surely satisfy the propagator equation 
\begin{equation}\label{propa1}\mathcal Z^\epsilon_{s,u} (x,z) = \int_{\Bbb R} \mathcal Z_{s,t}^\epsilon(x,y)\mathcal Z_{t,u}^\epsilon(y,z)dy,\;\;\;\;\;\;\;\;\;\;\;\; s<t<u, \;\;\;\;\;\;\;\;\;\;x,z\in\Bbb R.
\end{equation}
To prove this, one can show that both sides of the equation satisfy the Duhamel form of the equation written in Theorem \ref{mr3}, for which \textit{pathwise uniqueness} is well-known \cite{Wal86} for any \textit{fixed} $\epsilon>0$. We do not write the calculation here, but the details of a similar calculation can be found in e.g. \cite[Theorem 3.1(vii)]{AKQ14b} or \cite[Lemma 3.12]{A}.

The bulk of this section will be devoted to calculating the limit of the moments as $\epsilon\to 0$ of the processes $\mathcal Z^\epsilon_{s,t}$, since that is what is needed to apply Proposition \ref{mr}.

\begin{defn}
    Define $\theta:= 1-\|\varphi\|_{L^2(\Bbb R)}^2 = 1-\varphi *\varphi (0)>0$, where $\varphi$ is as in Theorem \ref{mr3}. Define the following second-order elliptic operator on $C^2(\Bbb R^n)$: $$L^{(n,\epsilon)}_{\mathrm{Cen}}f(y_1,...,y_n):= \frac12 \sum_{i,j=1}^n \big( \theta \delta_{i,j}+\epsilon \Phi_\epsilon(y_i-y_j)\big)\cdot (\partial_{ij}f)(y_1,...,y_n).$$ %which is just $L^{(n,\epsilon)}_{\mathrm{Sing}}$ without its drift (first-order) part. 
    Denote by $ \mathbf E^{(x_1,...,x_n)}_{\mathrm{Cen}(\epsilon)}$ the expectation with respect to the Markov process $X$ on $\Bbb R^n$ with generator $L^{(n,\epsilon)}_{\mathrm{Cen}}$.
\end{defn}

We remark trivially that the Markov process $X$ with generator $L^{(n,\epsilon)}_{\mathrm{Cen}}$ is a martingale, thus the subscript refers to it being ``\textbf{cen}tered."

\begin{lem}\label{sing}
    Let $\Phi_\epsilon := \varphi_\epsilon * \varphi_\epsilon$, with $\varphi_\epsilon$ as in Theorem \ref{mr3}. Then for $\psi:\Bbb R\to \Bbb R$ bounded and continuous, and $x_1,...,x_n\in\Bbb R$, we have that$$\int_{\Bbb R^n} \Bbb E\bigg[ \prod_{j=1}^n \mathcal Z_{0,t}^\epsilon (x_j, y_j) \bigg]\prod_{j=1}^n \psi(y_j) dy\cdots dy_n = \mathbf E^{(x_1,...,x_n)}_{\mathrm{Sing}(\epsilon)} [ e^{  \sum_{1\le i<j\le k}\int_0^t \Phi_\epsilon (X^i_s-X^j_s)ds}\prod_{j=1}^n \psi(X^j_s)].$$Here the expectation on the right side is with respect to a diffusion process on $\Bbb R^n$ whose generator is $$L^{(n,\epsilon)}_{\mathrm{Sing}}f(y_1,...,y_n):= L^{(n,\epsilon)}_{\mathrm{Cen}}f(y_1,...,y_n) + \sum_{i\ne j} \epsilon^{1/2} \Phi_\epsilon(y_i-y_j) (\partial_i f)(y_1,...,y_n). $$
\end{lem}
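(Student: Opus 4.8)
\textbf{Proof proposal for Lemma \ref{sing}.}

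The plan is to derive the moment formula from a Feynman--Kac / Girsanov representation for the It\^o--Walsh solution of \eqref{1a}, exactly in the spirit of the computation recalled after Proposition \ref{3.1}, but now keeping track of the advective term $\epsilon^{1/2}\partial_y(u\eta^\epsilon)$. First I would write the Duhamel (mild) form of \eqref{1a} and use the standard chaos/Feynman--Kac expansion (see \cite{BC95, BC95} or \cite[Section 2]{BC95}) to express the joint moment $\E[\prod_j \mathcal Z^\epsilon_{0,t}(x_j,y_j)]$ as an $n$-fold Brownian expectation with an exponential weight built from the spatial covariance of the driving noise. The key point is that differentiating the noise in one of the two slots of the multiplicative term turns the symmetric interaction kernel $\Phi_\epsilon = \varphi_\epsilon*\varphi_\epsilon$ into a combination of $\Phi_\epsilon$ (from the undifferentiated pairings), $\epsilon^{1/2}\Phi_\epsilon'$ (from one derivative, i.e. one advective and one plain factor), and $\epsilon\,\Phi_\epsilon''$ (from two advective factors); these will become, respectively, the potential term, the first-order drift term, and the second-order correction $\epsilon\Phi_\epsilon$ inside the diffusion matrix after integrating by parts. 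The factor $\theta\delta_{i,j}$ appearing in the diffusion coefficient comes precisely from the $i=j$ contribution: the self-interaction $\epsilon\,\Phi_\epsilon(0) = \varphi*\varphi(0) = 1-\theta$ must be subtracted off the It\^o diffusivity $1$ of each coordinate, leaving $\theta$ on the diagonal (this is why the hypothesis $\varphi*\varphi(0)<1$ is imposed).

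Concretely, I would first establish the ``raw'' moment formula with a plain $n$-dimensional Brownian motion and exponential weight $e^{\sum_{i<j}\int_0^t(\text{kernel applied to }B^i-B^j)}$, where the kernel collects all the pairing contributions; this is the analogue of Proposition \ref{3.1}. Then I would apply It\^o's formula to the terms involving $\Phi_\epsilon'$ and $\Phi_\epsilon''$ to rewrite them: the $\Phi_\epsilon''$ term, via $\int_0^t \Phi_\epsilon''(B^i_s-B^j_s)\,ds = 2\big(\Phi_\epsilon(B^i_t-B^j_t)-\Phi_\epsilon(B^i_0-B^j_0)\big) - 2\int_0^t \Phi_\epsilon'(B^i_s-B^j_s)\,d(B^i-B^j)_s$, feeds a stochastic integral that is absorbed by Girsanov's theorem into a change of drift, and the quadratic-variation penalty from Girsanov together with the $\Phi_\epsilon''$-bracket produces exactly the extra $\epsilon\Phi_\epsilon(y_i-y_j)$ inside the diffusion matrix. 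The upshot is that the $n$-point function is the expectation under the law of the diffusion with generator $L^{(n,\epsilon)}_{\mathrm{Sing}}$ of $e^{\sum_{i<j}\int_0^t\Phi_\epsilon(X^i_s-X^j_s)\,ds}\prod_j\psi(X^j_t)$, which is the claimed identity. I would also note that the well-posedness of this SDE is standard since the drift and diffusion coefficients are smooth and bounded with bounded derivatives for each fixed $\epsilon>0$, and the diffusion matrix $\theta I + \epsilon(\Phi_\epsilon(y_i-y_j))_{ij}$ is uniformly elliptic because it is a positive multiple of identity plus a positive semidefinite Gram-type matrix (the covariance matrix of $\varphi_\epsilon$ translated by the $y_i$'s).

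The main obstacle, and the step requiring genuine care rather than bookkeeping, is the combinatorial identification of which pairing structures survive and with which powers of $\epsilon$: each of the $n$ factors $\mathcal Z^\epsilon_{0,t}(x_j,\cdot)$ has \emph{two} sources of noise (the $u\eta^\epsilon$ term and the $\epsilon^{1/2}\partial_y(u\eta^\epsilon)$ term), so a priori the chaos expansion involves a sum over assignments of ``plain'' versus ``advective'' to each noise insertion, and one must check that the total exponent of $\epsilon$ attached to each pairing type is exactly right to produce an order-one limiting kernel after the derivatives are distributed — in particular that the genuinely $\epsilon$-suppressed terms (triple or higher coincidences, or cross terms with a leftover positive power of $\epsilon$) can be pushed into the generator as the $O(\epsilon)$ and $O(\epsilon^{1/2})$ corrections rather than contributing to the exponential weight. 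This is precisely the kind of computation done in \cite[Section 2]{BC95} for the purely multiplicative case and in \cite{DDP23, dom} for advective-type models, and I would cite those for the routine parts while spelling out the new derivative bookkeeping. Everything else (Fubini, convergence of the chaos series for fixed $\epsilon$, and the Girsanov manipulations) is routine given the smoothness and compact support of $\varphi$.
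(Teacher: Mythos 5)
Your proposal does not reproduce the paper's argument, and more importantly I believe it contains a gap that cannot be fixed within the framework you describe. The paper's proof of Lemma~\ref{sing} does not proceed by expanding the chaos series of \eqref{1a} and then transforming a Brownian Feynman--Kac formula. Instead, it starts from a random-environment diffusion $dX^i = \epsilon^{1/2}\eta^\epsilon(s,X^i_s)\,ds + \theta^{1/2}\,dW^i_s$, invokes the fact (from the Warren/Gawedzki--type literature, cited as \cite{ew6}) that the quenched forward density $v$ solves the advective SHE \eqref{4}, observes that the annealed $n$-point motion has generator $L^{(n,\epsilon)}_{\mathrm{Cen}}$ (so the off-diagonal diffusion matrix $\epsilon\Phi_\epsilon(y_i-y_j)$ is present \emph{from the outset} as the covariance of the Gaussian drift field), and then passes from $v$ to $u$ via the explicit shear-and-tilt identity $u(t,y)=e^{\frac12\epsilon^{-1}t+\epsilon^{-1/2}y}v(t,y+\epsilon^{-1/2}t)$ of \cite{bld}. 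The exponential tilt is a \emph{scalar} Girsanov change of measure on the already-correlated diffusion: it changes only the drift (producing the first-order part of $L^{(n,\epsilon)}_{\mathrm{Sing}}$), while the quadratic variation $\langle X^i,X^j\rangle_t = \epsilon\int_0^t\Phi_\epsilon(X^i_s-X^j_s)\,ds$ is what generates the $e^{\sum_{i<j}\int_0^t\Phi_\epsilon\,ds}$ Feynman--Kac weight. The diffusion matrix is never altered.

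The gap in your proposal is exactly at the step where you claim that Girsanov plus It\^o bookkeeping on a Brownian Feynman--Kac formula ``produces exactly the extra $\epsilon\Phi_\epsilon(y_i-y_j)$ inside the diffusion matrix.'' Girsanov's theorem can change the drift of a diffusion but never its quadratic variation, so there is no way to pass from an expectation over $n$ independent Brownian motions to the expectation $\mathbf E^{(x_1,\dots,x_n)}_{\mathrm{Sing}(\epsilon)}$, whose coordinates are genuinely correlated ($\langle X^i,X^j\rangle\neq 0$ for $i\neq j$). Relatedly, your proposed ``raw'' step does not hold: the $n$-point moment of \eqref{1a} solves a PDE whose second-order part already contains the cross-terms $\epsilon\Phi_\epsilon(y_i-y_j)\partial_{ij}$ (one sees this immediately by applying It\^o's formula to $\prod_j u(t,y_j)$ and computing the cross-variation of the two noise insertions $u\eta^\epsilon$ and $\epsilon^{1/2}\partial_y(u\eta^\epsilon)$), so these moments are \emph{not} a Feynman--Kac functional over independent Brownian motions with a potential $\Phi_\epsilon + \epsilon^{1/2}\Phi_\epsilon' + \epsilon\Phi_\epsilon''$. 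This is the structural difference between Lemma~\ref{sing} and Proposition~\ref{3.1}: in the dSHE case the reference diffusion $\mathrm{Diff}(\epsilon)$ is Brownian motion with a drift (identity diffusion matrix), so Girsanov suffices; in the aSHE case the reference diffusion $\mathrm{Sing}(\epsilon)$ has a state-dependent, non-scalar diffusion matrix, which must be built in from the start, not produced by a change of measure. (As a smaller point, your It\^o identity for $\int_0^t\Phi_\epsilon''(B^i-B^j)\,ds$ has a spurious factor of $2$, since $d\langle B^i-B^j\rangle = 2\,dt$ already supplies the $2$.)

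A direct argument that would actually work in the spirit of your proposal is to bypass the Brownian intermediate step entirely: derive the moment PDE for $m_n(t,\vec y):=\E\prod_j u(t,y_j)$ via It\^o's formula applied to $\prod_j u(t,y_j)$ and the covariations of the two martingale noise terms, and then recognize that PDE as the Kolmogorov backward equation with generator $L^{(n,\epsilon)}_{\mathrm{Sing}}$ plus potential $\sum_{i<j}\Phi_\epsilon(y_i-y_j)$, concluding by Feynman--Kac for that diffusion. That would be a legitimate alternative to the paper's route, but it is not the route you sketched, and it requires checking that $\theta I + \epsilon(\Phi_\epsilon(y_i-y_j))_{ij}$ is indeed a valid (uniformly elliptic) diffusion matrix, which you did correctly address.
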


Note here that the mechanism that will give rise to the KPZ limit is completely different than that of Theorem \ref{mr2}. Indeed in the proof of that theorem, we had $\Phi_\epsilon^{(2p-1)}$ in the exponent. It\^o formula with Girsanov thus gave rise to a process where the exponent had a term of the form $\epsilon^{4p-1} (\Phi_\epsilon^{(2p-1)})^2 \approx \sigma_p^2 \delta_0,$ which is the mechanism giving rise to the strange noise coefficient of $\sigma_p^2$ there. In contrast, here we have no derivatives on $\Phi_\epsilon$ and thus no It\^o formula. Rather the second-order part of the generator itself ``degenerates slightly" near the boundaries of the Weyl chamber, and these ``\textbf{sing}ularities" are what will give rise to the strange noise coefficient of $\gamma_{\mathrm{ext}}^2$ for the limiting SPDE.

\begin{proof} The proof is based on observations made in \cite{war}. Consider the diffusion in random environment given by \begin{equation}\label{sde}dX^i_s = \epsilon^{1/2} \eta^\epsilon(s,X^i_s)ds + \theta^{1/2} dW^i_s
\end{equation}
where $1\le i\le n$ and the environment $\eta^\epsilon$ is fixed (quenched), and $W^i$ are independent Brownian motions independent of $\eta$. Such a process makes sense, in fact the entire flow of diffeomorphisms for the SDE given $\eta^\epsilon$ is well-posed, see Theorem 4.5.1 of the monograph \cite{Kun94b}. As proved in \cite[Proposition 2.1]{ew6}, the Kolmogorov forward equation for each $X^i$ may be written as the It\^o-Walsh solution\footnote{This equation looks misleading because the It\^o-Stratonovich correction is not a constant multiple of $v$ but rather it is given by $\frac12 (1-\theta) \partial_x^2 v(t,x),$ which explains why the viscosity in the It\^o equation is $\frac12$ instead of $\frac12\theta$.} to 
\begin{equation}\label{4}\partial_t v(t,x) = \frac12 \partial_x^2 v(t,x) + \epsilon^{1/2} \partial_x \big( v(t,x) \eta^\epsilon(t,x)\big).
\end{equation}
%is it really plus or minus?????
If we average out the environment $\eta^\epsilon$ then the $n$-point motion $(X^1,...,X^n)$ is a diffusion in $\Bbb R^n$ with generator $L^{(n,\epsilon)}_{\mathrm{Cen}}$, more precisely if we let $v_{s,t}(x,y)$ denote the propagators of \eqref{4}, we have \begin{equation}\label{moma}\int_{\Bbb R^n} \Bbb E[ \prod_{j=1}^n v_{0,t}(x_j,y_j)] \prod_{j=1}^n \phi(y_j)dy\cdots dy_n = \mathbf E_{\mathrm{Cen}(\epsilon)}^{(x_1,...,x_n)} \big[ \prod_{j=1}^n \phi(X^j_t)\big].\end{equation}
This immediately follows from \eqref{sde} since  $\Bbb E[ (X^i_{t+dt}-X^i_t) (X^j_{t+dt}-X^j_t)| \mathcal F_t] = (\epsilon \Phi_\epsilon(X^i_t-X^j_t) +\theta \delta_{i,j})dt$, so that $(X^1,...,X^n)$ solves the martingale problem for $L^{(n,\epsilon)}_{\mathrm{Cen}}$; we refer to the discussion of \cite[(2.23)-(2.24)]{ew6} if one desires a more explicit calculation. As first observed in \cite{bld}, note by simple calculus that $u(t,y)$ from Theorem \ref{mr3} is related to $v(t,x)$ above, by the formula $$u(t,y) = e^{\frac12 \epsilon^{-1}t + \epsilon^{-1/2}y } v(t,y+\epsilon^{-1/2}t),$$ where the equality is in law as space-time processes, but it can be made pathwise by replacing $\eta(t,x)$ by $\eta(t,x-\epsilon^{-1/2}t)$ in \eqref{4} which preserves the law of the noise. 
The exponential part $e^{\frac12 \epsilon^{-1}t + \epsilon^{-1/2}y }$ can be interpreted as a change of measure by using the martingale $e^{\epsilon^{-1/2}(X^1+...+X^n) - \frac{\epsilon^{-1}}2 \langle X^1+...+X^n\rangle}.$ More precisely, recall that $\langle X^i,X^j\rangle_t = \epsilon \int_0^t \Phi_\epsilon(X^i_s-X^j_s)ds$, then use \eqref{moma} and then go through with this change of measure, and we will obtain that \begin{align*}\int_{\Bbb R^n} \Bbb E[ \prod_{j=1}^n &\mathcal Z^\epsilon_{0,t}(x_j,y_j)] \prod_{j=1}^n \phi(y_j)dy\cdots dy_j \\&= e^{\frac{n}2 \epsilon^{-1}t} \mathbf E_{\mathrm{Cen}(\epsilon)}^{(x_1,...,x_n)} \big[e^{\epsilon^{-1/2} (X^1_t +...+X^n_t)} \prod_{j=1}^n \phi(X^j_t+\epsilon^{-1/2}t)\big]\\&=  \mathbf E^{(x_1,...,x_n)}_{\mathrm{Sing}(\epsilon)} [ e^{  \sum_{1\le i<j\le k}\int_0^t \Phi_\epsilon (X^i_s-X^j_s)ds}\prod_{j=1}^n \phi(X^j_s)]
\end{align*}
as required for the lemma.
\end{proof}

\begin{lem}\label{CAF2}
    Under $\mathbf E^{(x_1,...,x_n)}_{\mathrm{Cen}(1)},$ each coordinate difference $X^i-X^j$ is a Markov process on $\Bbb R$ with generator $G_{\mathrm{diff}}:= (1- \varphi*\varphi(x))\partial_x^2$.
    Let $\mathbf E_{\mathrm{diff}}^x$ denote expectation with respect to that Markov process, started from $x\in\Bbb R$. There exists some universal constant $C>0$, such that uniformly over all smooth functions $\phi\ge 0$ with $\int_{\Bbb R} \phi =1,$ and all $s,t ,q\ge 0$, one has 
    \begin{align*}
        \sup_{x\in\Bbb R} \mathbf E^x_{\mathrm{diff}}\bigg[\bigg|\int_s^t \phi(Y_s)ds\bigg|^q\bigg]^{1/q} &\leq Cq^{1/2} |t-s|^{1/2}\\
        \sup_{x\in\Bbb R} \mathbf E^x_{\mathrm{diff}}[e^{q \int_0^t \phi(Y_s)ds}] &\leq Ce^{Cq^2 t}.
    \end{align*}
\end{lem}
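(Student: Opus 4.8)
The plan is to reduce both estimates to the analogous estimates for standard Brownian motion, which are exactly the content of Lemma \ref{CAF}, by comparing the diffusion $Y$ with generator $G_{\mathrm{diff}} = (1-\varphi*\varphi(x))\partial_x^2$ to a time-changed Brownian motion. First I would record the identity claimed at the start of the statement: under $\mathbf E^{(x_1,\dots,x_n)}_{\mathrm{Cen}(1)}$, the coordinate difference $X^i - X^j$ has quadratic variation $\langle X^i - X^j\rangle_t = \int_0^t 2\bigl(\theta + \varepsilon\Phi_\varepsilon(0) - \Phi(X^i_s-X^j_s)\bigr)\,ds$ after unwinding the covariance structure of $L^{(n,\varepsilon)}_{\mathrm{Cen}}$ at $\varepsilon=1$; since $\theta = 1 - \varphi*\varphi(0)$ and $\Phi_1 = \varphi*\varphi$, this simplifies to $\int_0^t 2(1-\varphi*\varphi(X^i_s-X^j_s))\,ds$, confirming the generator $G_{\mathrm{diff}}$. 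The key structural fact is that the diffusion coefficient $a(x) := 1-\varphi*\varphi(x)$ satisfies $0 < 1 - \|\varphi\|_{L^2}^2 \le a(x) \le 1$ uniformly, by the hypothesis $\|\varphi\|_{L^2(\mathbb R)}<1$ together with $|\varphi*\varphi(x)| \le \varphi*\varphi(0) = \|\varphi\|_{L^2}^2$.

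Next I would use the standard one-dimensional time-change (Dambis--Dubins--Schwarz): write $Y_t = B_{A_t}$ where $B$ is a standard Brownian motion and $A_t = \langle Y\rangle_t = \int_0^t 2a(Y_s)\,ds$ is the quadratic variation, so $A_t$ has derivative bounded between $2\theta$ and $2$. Hence $A$ is a strictly increasing bi-Lipschitz time change with $2\theta\, t \le A_t \le 2t$, and its inverse $A^{-1}$ satisfies $\tfrac12 t \le A^{-1}_t \le \tfrac{1}{2\theta} t$. Then for the first bound, a change of variables $s = A^{-1}_r$ gives
\begin{equation*}
\int_s^t \phi(Y_u)\,du = \int_{A_s}^{A_t} \phi(B_r)\,\frac{dr}{2a(Y_{A^{-1}_r})} \le \frac{1}{2\theta}\int_{A_s}^{A_t}\phi(B_r)\,dr \le \frac{1}{2\theta}\int_{0}^{A_t - A_s + A_s}\phi(B_r)\,dr,
\end{equation*}
and since $A_t - A_s \le 2(t-s)$ this last integral is bounded, after a shift using the Markov property at time $A_s$, by a quantity of the form $\frac{1}{2\theta}\int_0^{2(t-s)}\phi(\tilde B_r)\,dr$ for a Brownian motion $\tilde B$; Lemma \ref{CAF} applied to $\tilde B$ then yields the $q$-th moment bound $\le C q^{1/2}|t-s|^{1/2}$ with a constant depending only on $\theta$ (hence on $\varphi$, which is fixed). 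The second bound follows identically: $q\int_0^t\phi(Y_s)\,ds \le \frac{q}{2\theta}\int_0^{2t}\phi(B_r)\,dr$, so $\mathbf E^x_{\mathrm{diff}}[e^{q\int_0^t\phi(Y_s)ds}] \le \mathbf E^x_{\mathrm{BM}}[e^{(q/2\theta)\int_0^{2t}\phi(B_r)dr}] \le C e^{C(q/2\theta)^2 (2t)} = Ce^{C'q^2 t}$ by the second estimate of Lemma \ref{CAF}.

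The main obstacle I anticipate is purely bookkeeping: making the time-change comparison rigorous requires checking that $Y$ is genuinely a well-defined diffusion (non-explosive, which is automatic since $a$ is bounded and bounded away from $0$, and one can quote standard one-dimensional diffusion theory), and being careful that the substitution $u \mapsto r = A_u$ handles the case $s>0$ correctly — one wants to avoid needing control of $\phi(B_r)$ on the random interval $[0, A_s]$, which is why invoking the strong Markov property of $B$ at the stopping time $A_s$ (so that $r\mapsto B_{A_s + r}$ is again a Brownian motion) is the cleanest route. An alternative that sidesteps time-changes entirely is to compare heat kernels directly: the transition density $q_t(x,y)$ of $Y$ satisfies a Gaussian (Aronson-type) upper bound $q_t(x,y) \le C t^{-1/2}$ because the diffusion coefficient is uniformly elliptic and bounded, which immediately gives $\sup_x \mathbf E^x_{\mathrm{diff}}[\phi(Y_t)] \le Ct^{-1/2}\|\phi\|_{L^1}$, and then the iterated-Markov-property argument of Lemma \ref{CAF} goes through verbatim. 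I would likely present this heat-kernel version, or the time-change version, whichever is shorter, and simply remark that $\theta>0$ is where the hypothesis $\|\varphi\|_{L^2}<1$ enters.
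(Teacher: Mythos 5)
Your proposal is correct. The second route you describe at the end — the Aronson-type heat kernel bound $\sup_x \mathbf E^x_{\mathrm{diff}}[\phi(Y_t)] \le C\|\phi\|_{L^1}t^{-1/2}$ for the uniformly elliptic diffusion $Y$, followed by running the iterated Markov-property argument of Lemma \ref{CAF} verbatim with $Y$ in place of $B$ — is exactly the paper's proof. Your primary time-change route via Dambis--Dubins--Schwarz, $Y_t = B_{A_t}$ with $A_t = \int_0^t 2a(Y_u)\,du$ and $a = 1-\varphi*\varphi \in [\theta,1]$, is a genuinely different but equally valid argument: it trades the heat-kernel citation for the bookkeeping you rightly flag, namely that $A_s$ is a stopping time for the DDS filtration $\mathcal G_r := \mathcal F_{A^{-1}_r}$ (since $\{A_s \le r\} = \{A^{-1}_r \ge s\} \in \mathcal G_r$), so the strong Markov property at $A_s$ lets you condition on $\mathcal G_{A_s}$, take the supremum over the starting point of the fresh Brownian motion, and quote Lemma \ref{CAF}. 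Both routes hinge on the same uniform ellipticity $\theta \le a \le 1$, which is precisely where $\|\varphi\|_{L^2}<1$ enters, as you observe. One small typo: in your display for $\langle X^i - X^j\rangle$ the last term should be $\varepsilon\Phi_\varepsilon(X^i_s - X^j_s)$ rather than $\Phi(X^i_s - X^j_s)$; the simplification at $\varepsilon=1$ that follows is nonetheless correct.
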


\begin{proof}The first statement follows by direct calculation. For the two estimates, first note by standard heat kernel estimates for elliptic diffusions \cite[Theorem 1]{Aro} that $\mathbf E^x_{\mathrm{diff}} [ \phi(Y_t) ] \leq C \|\phi\|_{L^1(\Bbb R)} t^{-1/2},$ where $C$ is a universal constant independent of $t,x,$ and $\phi$. From here, the proof follows using exactly the same arguments as in the proof of Lemma \ref{CAF}, iteratively using the Markov property to get $m^{th}$ moment bounds, but replacing the Brownian motion $B$ there by the process $Y$ here.
\end{proof}

\begin{lem}\label{rescale}
    For any bounded measurable $F:C([0,T],\Bbb R^n)\to\Bbb R$ we have that $$\mathbf E^{(\epsilon x_1,...,\epsilon x_n)}_{\mathrm{Cen}(\epsilon)} \big[ F\big((X_t)_{t\in [0,T]}\big)\big] = \mathbf E^{(x_1,...,x_n)}_{\mathrm{Cen}(1)} \big[ F\big( (\epsilon X_{\epsilon^{-2} t})_{t\in [0,T]}\big) \big].$$ 
\end{lem}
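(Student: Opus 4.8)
The plan is to read the identity as an instance of parabolic (Brownian) scaling for the martingale problem attached to $L^{(n,\epsilon)}_{\mathrm{Cen}}$. First I would record the elementary algebraic fact that, since $\varphi_\epsilon(y)=\epsilon^{-1}\varphi(\epsilon^{-1}y)$, the convolution scales as $\Phi_\epsilon(y)=\epsilon^{-1}\Phi(\epsilon^{-1}y)$ with $\Phi:=\varphi*\varphi$, so that $\epsilon\,\Phi_\epsilon(y)=\Phi(\epsilon^{-1}y)$ and
$$L^{(n,\epsilon)}_{\mathrm{Cen}}h(y)=\frac12\sum_{i,j=1}^n\big(\theta\delta_{i,j}+\Phi(\epsilon^{-1}(y_i-y_j))\big)\,\partial_{ij}h(y).$$
Next I would note that the martingale problem for $L^{(n,\epsilon)}_{\mathrm{Cen}}$ is well-posed: the diffusion matrix $a_\epsilon(y)_{ij}=\theta\delta_{ij}+\Phi(\epsilon^{-1}(y_i-y_j))$ is smooth with bounded derivatives of all orders (because $\varphi\in C_c^\infty$ forces $\Phi\in C_c^\infty$, and $|\Phi|\le\Phi(0)=\|\varphi\|_{L^2}^2$), and it is uniformly elliptic with smallest eigenvalue $\ge\theta>0$, since $\Phi$ is a positive-definite function in the sense of Bochner ($\widehat\Phi=\widehat\varphi^{\,2}\ge 0$ because $\varphi$ is real and even), so $(\Phi(\epsilon^{-1}(y_i-y_j)))_{ij}$ is positive semidefinite for every configuration. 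By Stroock--Varadhan this uniquely determines the distribution of $X$ under $\mathbf E^{(\cdot)}_{\mathrm{Cen}(\epsilon)}$ as a Borel measure on $C([0,T],\mathbb R^n)$.

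The heart of the proof is to verify that, under $\mathbf E^{(x_1,\ldots,x_n)}_{\mathrm{Cen}(1)}$, the rescaled process $Y_t:=\epsilon X_{\epsilon^{-2}t}$ solves the $\big(L^{(n,\epsilon)}_{\mathrm{Cen}},\delta_{(\epsilon x_1,\ldots,\epsilon x_n)}\big)$-martingale problem. For $h\in C_c^\infty(\mathbb R^n)$ set $g(y):=h(\epsilon y)$; the martingale problem for $L^{(n,1)}_{\mathrm{Cen}}$ says that $g(X_u)-\int_0^u(L^{(n,1)}_{\mathrm{Cen}}g)(X_r)\,dr$ is a martingale in the filtration $(\mathcal F_u)$ generated by $X$, hence its deterministic time change at $u=\epsilon^{-2}t$ is a martingale in $\mathcal G_t:=\mathcal F_{\epsilon^{-2}t}$, to which $Y$ is adapted. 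Substituting $\partial_{ij}g(y)=\epsilon^2(\partial_{ij}h)(\epsilon y)$, evaluating at $y=X_{\epsilon^{-2}s}$ so that $\epsilon y=Y_s$ and $y_i-y_j=\epsilon^{-1}(Y^i_s-Y^j_s)$, and changing variables $r=\epsilon^{-2}s$ in the integral, the $\epsilon^2$ produced by the two derivatives cancels the $\epsilon^{-2}$ Jacobian, leaving precisely $h(Y_t)-\int_0^t(L^{(n,\epsilon)}_{\mathrm{Cen}}h)(Y_s)\,ds$ as a $\mathcal G$-martingale (here one uses $\epsilon\Phi_\epsilon(z)=\Phi(\epsilon^{-1}z)$ to recognize $L^{(n,\epsilon)}_{\mathrm{Cen}}$). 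Since $Y_0=\epsilon X_0=(\epsilon x_1,\ldots,\epsilon x_n)$ and $Y$ has continuous paths (a linear rescaling and a deterministic time change of the continuous path $X$), $Y$ is a solution of the asserted martingale problem; by the well-posedness of the previous paragraph its law on $C([0,T],\mathbb R^n)$ coincides with that of $X$ under $\mathbf E^{(\epsilon x_1,\ldots,\epsilon x_n)}_{\mathrm{Cen}(\epsilon)}$. Equality of these two Borel probability measures immediately gives the claimed identity for every bounded measurable $F$ (not merely continuous ones).

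I do not expect a genuine obstacle: this is a routine scaling computation. The one point that deserves a sentence of care is the well-posedness/uniqueness-in-law input, which is why I would explicitly invoke uniform ellipticity via Bochner's theorem together with the smoothness and boundedness of $\Phi$; the rest is bookkeeping with the chain rule, a change of variables, and the stability of the martingale property under a deterministic time change. (Alternatively the same conclusion follows at the level of stochastic differential equations: $a_\epsilon$ being smooth, bounded and uniformly elliptic, its square root $\sigma_\epsilon=a_\epsilon^{1/2}$ is globally Lipschitz, so $X$ is the unique strong solution of $dX=\sigma_\epsilon(X)\,dW$, and $Y_t=\epsilon X_{\epsilon^{-2}t}$ is the usual diffusive rescaling of an It\^o diffusion.)
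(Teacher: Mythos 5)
Your proof is correct and fills in exactly the routine scaling computation that the paper considers ``clear from the expression for the generator'' (the paper gives no proof beyond that sentence). The key scaling identity $\epsilon\,\Phi_\epsilon(y)=\Phi(\epsilon^{-1}y)$, the martingale-problem bookkeeping under the parabolic rescaling $Y_t=\epsilon X_{\epsilon^{-2}t}$, and the appeal to uniform ellipticity (via Bochner and $\theta>0$) for uniqueness in law are all precisely the intended argument.
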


The proof is clear from the expression for the generator $L^{(n,\epsilon)}_{\mathrm{Cen}}$.

\begin{prop}[Limit of the moments]\label{lim2}
    Let $\phi: \Bbb R\to \Bbb R$ be bounded and continuous. For all $n\in\Bbb N,$ and all $(x_1,...,x_n)\in\Bbb R^n$, we have $$\lim_{\epsilon\to 0} \int_{\Bbb R^n } \Bbb E \bigg[ \prod_{j=1}^n \mathcal Z_{0,t}^\epsilon (x_j, y_j) \bigg] \prod_{j=1}^n \phi(y_j) dy\cdots dy_n = \mathbf E_{\mathrm{BM}^{\otimes n}}^{(x_1,...,x_n)} \bigg[ e^{\gamma_{\mathrm{ext}}^2  \sum_{1\le i<j\le n} L_0^{W^i-W^j}(t) } \prod_{j=1}^n \phi(W^j_t)\bigg] .$$
    Here $\gamma_{\mathrm{ext}}^2$ is as in Theorem \ref{mr3}. The above convergence is uniform on compact subsets of $(x_1,...,x_n)$.
\end{prop}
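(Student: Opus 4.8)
The plan is to pass the moment formula from Lemma~\ref{sing} through a joint invariance principle for the underlying diffusion, exactly as in the proof of Proposition~\ref{lim}, but accounting for the fact that here the singularity arises from the degeneracy of the second-order part of the generator rather than from a derivative of $\Phi_\epsilon$ hitting itself via It\^o's formula. First I would perform a change of variables: write the expectation on the right side of Lemma~\ref{sing} in terms of the process under $\mathbf E^{(x_1,...,x_n)}_{\mathrm{Cen}(\epsilon)}$ by absorbing the first-order drift term $\epsilon^{1/2}\Phi_\epsilon(y_i-y_j)$ via Girsanov, producing a Radon--Nikodym density of the form $\exp\big( \epsilon^{1/2}\!\int_0^t(\cdots)\,dX - \tfrac12 \epsilon \!\int_0^t (\cdots)^2 ds\big)$; one checks that the quadratic-variation correction and all the ``cross'' terms are bounded by $\epsilon$ times continuous additive functionals that are tight and hence vanish in the limit (using Lemma~\ref{CAF2} and the rescaling Lemma~\ref{rescale} to get uniform-in-$\epsilon$ moment bounds), so that the density is uniformly $L^2$-bounded and asymptotically trivial. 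This reduces the problem to computing $\lim_{\epsilon\to0}\mathbf E^{(x_1,...,x_n)}_{\mathrm{Cen}(\epsilon)}\big[\, e^{\sum_{i<j}\int_0^t \Phi_\epsilon(X^i_s-X^j_s)\,ds}\,\prod_j \phi(X^j_t)\,\big]$.

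Next I would establish the joint invariance principle for the tuple $\big((X^1,\dots,X^n), (\int_0^t \Phi_\epsilon(X^i_s-X^j_s)\,ds)_{i<j}\big)$ under $\mathbf E^{(x_1,...,x_n)}_{\mathrm{Cen}(\epsilon)}$ as $\epsilon\to0$. Tightness of $(X^1,\dots,X^n)$ in $C[0,T]$ follows from the uniform moment/Fernique estimates provided by Lemma~\ref{CAF2}; that each coordinate is a martingale with $\langle X^i,X^j\rangle_t = \delta_{ij}t + \epsilon\int_0^t\Phi_\epsilon(X^i_s-X^j_s)\,ds$ and that the correction term is $O(\epsilon^{1/2})$ in $C[0,T]$ (again by Lemma~\ref{CAF2}), so by L\'evy's criterion any limit point of $(X^1,\dots,X^n)$ is a standard Brownian motion in $\Bbb R^n$. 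For the additive functionals, the key computation is to identify the ``effective local-time constant.'' I would introduce $\Psi_\epsilon:\Bbb R\to\Bbb R$ solving $(\theta + \epsilon\Phi_\epsilon(x))\Psi_\epsilon''(x) = \Phi_\epsilon(x)$ with $\Psi_\epsilon(-\infty)=0$ (this is the natural ``corrector'' for the difference process $X^i-X^j$, whose generator has diffusion coefficient $\theta+\epsilon\Phi_\epsilon$), so that by Dynkin's formula $\Psi_\epsilon(X^i_t-X^j_t) - \int_0^t\Phi_\epsilon(X^i_s-X^j_s)\,ds$ minus a lower-order term is a martingale. A direct computation gives $\Psi_\epsilon'(+\infty) = \int_{\Bbb R}\frac{\Phi_\epsilon(x)}{\theta+\epsilon\Phi_\epsilon(x)}\,dx$, and rescaling $x = \epsilon a$ turns this into $\int_{\Bbb R}\frac{\varphi*\varphi(a)}{1-\varphi*\varphi(a)}\,da = \gamma_{\mathrm{ext}}^2$, using $\theta = 1-\varphi*\varphi(0)$ and $\|\varphi\|_{L^2}<1$ to guarantee positivity of the denominator. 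Hence $\Psi_\epsilon$ converges uniformly on compacts to $a\mapsto \gamma_{\mathrm{ext}}^2\max\{0,a\}$, and by Tanaka's formula the limit of $\int_0^t\Phi_\epsilon(X^i_s-X^j_s)\,ds$ is $\gamma_{\mathrm{ext}}^2 L_0^{W^i-W^j}(t)$.

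The main obstacle, I expect, will be rigorously justifying that \emph{all} the error terms $-$ the Girsanov density correction, the drift correction in L\'evy's criterion, the lower-order term in Dynkin's formula for $\Psi_\epsilon$, and ``triple-intersection'' type cross terms $-$ are genuinely negligible in $C[0,T]$ with uniform-in-$\epsilon$ control, since $\Phi_\epsilon$ blows up like $\epsilon^{-1}$ pointwise. The right tool is the combination of Lemma~\ref{CAF2} (giving $\int_s^t\Phi_\epsilon(X^i-X^j)\,ds$ uniformly bounded moments of order $|t-s|^{1/2}$, after normalizing $\Phi_\epsilon$ to have unit mass) with the rescaling Lemma~\ref{rescale} to transfer bounds from $\mathbf E_{\mathrm{Cen}(\epsilon)}$ to the fixed process $\mathbf E_{\mathrm{Cen}(1)}$, plus a bound on the $L^2$-norm of the Radon--Nikodym density analogous to \eqref{sis}. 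Once that uniform integrability and tightness package is in place, one passes to the joint limit point, uses the martingale-problem characterization above to identify it as $\big((W^1,\dots,W^n),(\gamma_{\mathrm{ext}}^2 L_0^{W^i-W^j}(t))_{i<j}\big)$, plugs into the exponential functional, and reads off the claimed limit; uniformity on compact sets in $(x_1,\dots,x_n)$ follows since a bounded perturbation of the starting point does not affect the invariance principle or the moment bounds.
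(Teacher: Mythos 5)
Your overall strategy closely mirrors the paper's: start from the moment formula of Lemma~\ref{sing}, establish a joint invariance principle for $(X^1,\dots,X^n)$ together with the additive functionals $\int_0^t\Phi_\epsilon(X^i-X^j)\,ds$, identify the limit of the additive functionals as a constant times local time via a corrector and Tanaka's formula, and pass to the limit using uniform integrability. However, two of your key steps contain concrete errors.

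First, your corrector ODE has the wrong coefficient, and the algebra you assert does not produce $\gamma_{\mathrm{ext}}^2$. The diffusion coefficient of $X^i-X^j$ under $\mathbf E_{\mathrm{Cen}(\epsilon)}$ is $1-\epsilon\Phi_\epsilon(y)$, not $\theta+\epsilon\Phi_\epsilon(y)$: indeed $d\langle X^i-X^j\rangle_t = 2\bigl[(\theta+\epsilon\Phi_\epsilon(0)) - \epsilon\Phi_\epsilon(X^i_t-X^j_t)\bigr]\,dt$, and $\theta+\epsilon\Phi_\epsilon(0)=1$. With your stated ODE $(\theta+\epsilon\Phi_\epsilon)\Psi_\epsilon''=\Phi_\epsilon$, the substitution $x=\epsilon a$ gives
$\Psi_\epsilon'(+\infty)=\int_{\Bbb R}\frac{\varphi*\varphi(a)}{\theta+\varphi*\varphi(a)}\,da,$
which is \emph{not} $\gamma_{\mathrm{ext}}^2=\int_{\Bbb R}\frac{\varphi*\varphi(a)}{1-\varphi*\varphi(a)}\,da$; the simplification you claim would require $\theta+\varphi*\varphi(a)\equiv 1-\varphi*\varphi(a)$, which fails for any nonconstant $\varphi*\varphi$. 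The correct corrector (as in the paper) solves $G_{\mathrm{diff}}\Psi=\varphi*\varphi$ with $G_{\mathrm{diff}}=(1-\varphi*\varphi)\partial_x^2$ and is rescaled as $\Psi_{(\epsilon)}(y)=\epsilon\Psi(\epsilon^{-1}y)$, for which $\Psi''=\varphi*\varphi/(1-\varphi*\varphi)$ integrates to exactly $\gamma_{\mathrm{ext}}^2$.

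Second, the Girsanov density from $\mathrm{Sing}(\epsilon)$ to $\mathrm{Cen}(\epsilon)$ is uniformly $L^2$-bounded, as you say, but it is \emph{not} asymptotically trivial, so one cannot simply replace $\mathbf E_{\mathrm{Sing}(\epsilon)}$ by $\mathbf E_{\mathrm{Cen}(\epsilon)}$ in the limit. The quadratic variation of the Girsanov exponent behaves like $\langle D\rangle_t \approx \epsilon\int_0^t\Phi_\epsilon(X^i_s-X^j_s)^2\,ds$, and $\epsilon\Phi_\epsilon(y)^2=\epsilon^{-1}\bigl(\varphi*\varphi(\epsilon^{-1}y)\bigr)^2$ is an approximate Dirac mass of \emph{fixed} total mass $\|\varphi*\varphi\|_{L^2(\Bbb R)}^2>0$; hence $\langle D\rangle_t$ converges to a nontrivial local-time-type process, not to zero, and $\mathrm{Sing}(\epsilon)$ and $\mathrm{Cen}(\epsilon)$ do not merge in total variation. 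The paper avoids this pitfall by establishing the invariance principle \emph{directly under} $\mathrm{Sing}(\epsilon)$ and using Girsanov only to get the one-sided estimate $\mathbf E_{\mathrm{Sing}(\epsilon)}[F]\le C\,\mathbf E_{\mathrm{Cen}(\epsilon)}[F^2]^{1/2}$ (equation~\eqref{sis2}), which transfers the tightness and uniform-integrability bounds from Lemmas~\ref{CAF2} and~\ref{rescale}; the extra drift under $\mathrm{Sing}$ then only contributes the harmless $\epsilon^{1/2}\int_0^t\Gamma^{ij}_\epsilon\,ds$ term in Dynkin's formula, which vanishes because $|\Psi_{(\epsilon)}'|$ is bounded uniformly in $\epsilon$.
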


The proof will be done using the diffusion representation of the moments from Lemma \ref{sing}. Section 2 of \cite{dom} suggests one method of calculating the $\epsilon\to 0$ behavior of that diffusion using previously known results and Taylor expansions, though they only consider $n=2$ and they do not write out the proof explicitly. We give a different ODE-based argument here (based on simplified ideas from \cite{DDP23,DDP23+}) that also keeps the discussion more self-contained.

\begin{proof}
    It is clear by Lemma \ref{sing} that we require a joint invariance principle for the entire collection of processes given by $$\bigg( \big(X^1_t,...,X^n_t\big) , \big(   \int_0^t \Phi_\epsilon(X^i_s-X^j_s) ds\big)_{1 \le i<j\le n}\bigg)_{t\in [0,T]},$$
    under the measures $\mathbf E^{(x_1,...,x_n)}_{\mathrm{Sing}(\epsilon)}$ as $\epsilon\to 0$. Specifically, we will now show that these processes have a \textbf{joint} limit in distribution given by $$\bigg( \big( W^1_t,..., W^n_t\big), \big( \gamma_{\mathrm{ext}}^2 L_0^{W^i-W^j}(t)\big)_{1\le i<j\le n}\bigg)_{t\in [0,T]} ,$$ where $(W^1,...,W^n)$ is a standard Wiener process and $L^X_0$ is the local time process of the semimartingale $X$. To prove this, the main idea will be to first use Girsanov to ``reduce" estimates for $\mathbf E^{(x_1,...,x_n)}_{\mathrm{Sing}(\epsilon)}$ to estimates for the simpler process $\mathbf E^{(x_1,...,x_n)}_{\mathrm{Cen}(\epsilon)}$, then in turn use the rescaling property of Lemma \ref{rescale} to reduce all estimates to the $\epsilon$-independent process $\mathbf E^{(x_1,...,x_n)}_{\mathrm{Cen}(1)}$, for which Lemma \ref{CAF2} already yields very strong bounds.

    Apply Girsanov, and from the explicit expressions for the generators $L^{(n,\epsilon)}_{\mathrm{Sing}}$ and $L^{(n,\epsilon)}_{\mathrm{Cen}}$ we obtain the existence of a martingale $D$ such that
    \begin{equation}\label{g1}\mathbf E^{(x_1,...,x_n)}_{\mathrm{Sing}(\epsilon)} [F(X)] = \mathbf E^{(x_1,...,x_n)}_{\mathrm{Cen}(\epsilon)}\big[ e^{D_t-\frac12 \langle D\rangle_t} F(X)\big]
    \end{equation}
    for all bounded measurable $F:C([0,t],\Bbb R^n)\to\Bbb R,$ where one can check that the martingale $D$ satisfies 
    \begin{equation}\label{g2}\langle D\rangle _t\leq C\sum_{1\le i<j\le n}\int_0^t \delta_\epsilon(X^i_s-X^j_s)ds
    \end{equation}
    for some absolute constant $C>0$ and some function $\delta\in C_c^\infty(\Bbb R)$ with $\int \delta=1$ and $\delta_\epsilon(y):=\epsilon^{-1}\delta(\epsilon^{-1}y).$

    For any continuous martingale $G$, if $\Bbb E [ e^{ 2\langle G\rangle_t} ]<\infty$ for all $t\ge 0$, then by Novikov's condition $e^{ 2G_t-2\langle G\rangle_t} $ is a martingale, thus we have the bound \begin{equation}\label{g0}\Bbb E[e^{G_t}] = \Bbb E[ e^{G_t- \langle G\rangle_t +\langle G \rangle_t}] \leq \Bbb E[e^{2G_t -2\langle G\rangle_t }]^{1/2} \Bbb E [ e^{ 2\langle G\rangle_t} ]^{1/2} = \Bbb E [ e^{ 2\langle G\rangle_t} ]^{1/2}.
    \end{equation}
    By Lemmas \ref{CAF2} and \ref{rescale} one has that $$\sup_{\substack{\epsilon\in (0,1]\\(x_1,...,x_n)\in\Bbb R^n}} \mathbf E_{\mathrm{Cen}(\epsilon)}^{(x_1,...,x_n)} \big[ e^{ q  \int_0^T \delta_\epsilon (X^i_s-X^j_s)ds} \big] \leq Ce^{q^2 CT} <\infty,$$ where $C$ is some universal constant. By H\"older's inequality together with \eqref{g1} and \eqref{g2} and \eqref{g0}, the last expression easily implies that if $F: C[0,T]\to\Bbb R$ is any measurable path functional, then \begin{equation}
        \label{sis2} \mathbf E^{(x_1,...,x_n)}_{\mathrm{Sing}(\epsilon)} [ F(X)] \le C \mathbf E_{\mathrm{Cen}(\epsilon)}^{(x_1,...,x_n)} [ F(X) ^{2}]^{1/2}
    \end{equation}where $C$ is an \textit{absolute constant} independent of $\epsilon\in (0,1]$ and $F$ (specifically $C$ is an upper bound, uniform over all $\epsilon$, for the $L^2$ norm of the Radon-Nikodym derivative of $\mathbf E^{(x_1,...,x_n)}_{\mathrm{Sing}(\epsilon)}$ with respect to $\mathbf E^{(x_1,...,x_n)}_{\mathrm{Cen}(\epsilon)}$). This will be a very useful fact going forward.

    Now notice that under $\mathbf E_{\mathrm{Cen}(\epsilon)}^{(x_1,...,x_n)},$ the process $X$ is a martingale and by explicit calculation one has $\partial_t \langle X^i\rangle_t \leq C$ for some deterministic constant $C$, thus by Burkholder-Davis-Gundy, one has the estimate $\mathbf E_{\mathrm{Cen}(\epsilon)}^{(x_1,...,x_n)}[ |X_t^i-X_s^j|^q]^{1/q} \leq C|t-s|^{1/2},$ from which \eqref{sis2} immediately implies \begin{equation}\label{g3}\mathbf E_{\mathrm{Sing}(\epsilon)}^{(x_1,...,x_n)}[ |X_t^i-X_s^j|^q]^{1/q} \leq C|t-s|^{1/2}.
    \end{equation}
    Now notice that Lemmas \ref{CAF2} and \ref{rescale} imply the bound $\mathbf E^{(x_1,...,x_n)}_{\mathrm{Cen}(\epsilon)}\big[ \big| \int_s^t \Phi_\epsilon(X^i_u-X^j_u) du \big|^q\big]^{1/q} \leq C|t-s|^{1/2}$, after which \eqref{sis2} immediately implies 
    \begin{equation}\label{g4}
        \mathbf E^{(x_1,...,x_n)}_{\mathrm{Sing}(\epsilon)}\big[ \big| \int_s^t \Phi_\epsilon(X^i_u-X^j_u) du \big|^q\big]^{1/q} \leq C|t-s|^{1/2}.
    \end{equation}
    Notice that \eqref{g3} and \eqref{g4} immediately give tightness in $C[0,T]$ of the full tuple of $\frac12 n(n+1)$ processes for which we want to prove the invariance principle. Now it remains to identify the limit points.

    First let us show that $(X^1,...,X^n)$ converges to a standard Brownian motion under $\mathbf E^{(x_1,...,x_n)}_{\mathrm{Sing}(\epsilon)}$. In the prelimit, note that $M_t^i := X_t^i - \epsilon^{1/2} \sum_{j\ne i} \int_0^t\Phi_\epsilon(X^j_s-X^j_s)ds$ is a martingale for $1\le i \le n$. Martingality is preserved by limit points as long as uniform integrability holds, thus by \eqref{g4} any limit point $(W^1,...,W^n)$ (of $(X^1,...,X^n)$ viewed under $\mathbf E^{(x_1,...,x_n)}_{\mathrm{Sing}(\epsilon)}$ as $\epsilon\to 0$) must be a martingale, thanks to the multiplying factor of $\epsilon^{1/2}$ on the drift part of the expression for $M^i_t$. Let us now compute its quadratic variation (with the intent of applying Levy's criterion to conclude Brownianity of $W$). From Dynkin's formula and the expression for the generator $L^{(n,\epsilon)}_{\mathrm{Sing}}$ it follows that $M^i_tM^j_t - \int_0^t \sigma_{ij}^\epsilon(X_s) ds$ is a martingale, where $\sigma^\epsilon_{ij}(x_1,...,x_n):= \theta \delta_{i,j} + \epsilon\Phi_\epsilon(x_i-x_j).$ Thanks to the multiplying factor of $\epsilon$ on the latter term, and \eqref{g4}, in the limit point we see that $W^i_t W^j_t -\delta_{i,j}t$ is a martingale. Thus the limit point $(W^1,...,W^n)$ is indeed a Brownian motion in $\Bbb R^n$ by Levy criterion.

    It remains to investigate the process-level behavior of $\big(   \int_0^\bullet \Phi_\epsilon(X^i_s-X^j_s) ds\big)_{1 \le i<j\le n}$ under the measures $\mathbf E^{(x_1,...,x_n)}_{\mathrm{Sing}(\epsilon)}$ as $\epsilon\to 0$. This is the more interesting part of this proof, as it is exactly where $\gamma_{\mathrm{ext}}$ will appear. Recall the ($\epsilon$-independent) Markov process $\mathbf E^x_{\mathrm{diff}}$ from Lemma \ref{CAF2}, and let $G_{\mathrm{diff}}:= (1 - \varphi*\varphi)\partial_x^2$ denote its generator. Let $\Psi$ denote the solution to the second-order ODE $$G_{\mathrm{diff}} \Psi = \varphi *\varphi, \;\;\;\;\;\; \lim_{y\to-\infty} \Psi(y)=0.$$
    It is immediate (e.g. from L'H\^opital's rule) that $$\lim_{y\to +\infty} y^{-1} \Psi(y) = \int_{\Bbb R} \Psi''(u)du=\int_\Bbb R \frac{\varphi * \varphi(u)}{1-\varphi * \varphi(u)} du = \gamma_{\mathrm{ext}}^2 ,$$ which is the key observation. In particular $\Psi_{(\epsilon)}(y):= \epsilon\Psi(\epsilon^{-1} y) $ converges uniformly on compacts to the function $x\mapsto \gamma_{\mathrm{ext}}^2 \max\{0,x\}.$ Define $\Gamma^{ij}_\epsilon (x_1,...,x_n):=\sum_{k\ne \ell} \Phi_\epsilon(x_k-x_\ell) \cdot \partial_k\big( \Psi_{(\epsilon)}(x_i-x_j)\big).$ Using Dynkin's formula, one checks that in the prelimit the process $$N^{ij}_t : = \Psi_{(\epsilon)} (X^i_t-X^j_t) -\int_0^t \Phi_\epsilon(X^i_s-X^j_s)ds - \epsilon^{1/2 } \int_0^t\Gamma^{ij}_\epsilon(X^1_s,...,X^n_s)ds $$ is a $\mathbf E^{(x_1,...,x_n)}_{\mathrm{Sing}(\epsilon)}$-martingale for $1\le i<j\le n$, where the middle term on the right side comes from the second-order part of the generator $L^{(n,\epsilon)}_{\mathrm{Sing}}$ because $G_{\mathrm{diff}}\Psi_{(1)} = \Phi_1$, and the last term comes from the first-order (drift) part of $L^{(n,\epsilon)}_{\mathrm{Sing}}$. Notice that $\sup_{\epsilon\in (0,1]} |\Psi_{(\epsilon)}'| \leq C$ for some absolute constant $C$, thus by \eqref{g4} one sees that the last term goes to zero in law in the topology of $C[0,T]$, under $\mathbf E^{(x_1,...,x_n)}_{\mathrm{Sing}(\epsilon)}$ as $\epsilon\to 0$.

    Let $(W^1,...,W^n, (\mathscr V^{ij})_{1\le i<j\le n})$ be a joint limit point of the full tuple of $\frac12n(n+1)$ processes given by $\big( (X^1_t,...,X^n_t) , ( \int_0^t \Phi_\epsilon(X^i_s-X^j_s) ds\big)_{1 \le i<j\le n}\big)_{t\in [0,T]}$ under the measures $\mathbf E^{(x_1,...,x_n)}_{\mathrm{Sing}(\epsilon)}$ as $\epsilon\to 0$. We already proved that the marginal law of $(W^1,...,W^n)$ must be a Brownian motion. Since martingality is preserved by limit points as long as uniform integrability holds, the martingality of $N^{ij}$ in the prelimit implies that $2\gamma_{\mathrm{ext}}^2 \max\{0, W^i_t-W^j_t\} - \mathscr V^{ij}_t$ is a continuous martingale for the limit point. By Tanaka's formula, this forces $\mathscr V^{ij}_t= \gamma_{\mathrm{ext}}^2 L_0^{W^i-W^j}(t)$, for \textbf{all} $1\le i<j\le n,$ thus uniquely identifying the law of the full tuple of $\frac12n(n+1)$ processes and completing the proof of the invariance principle. 

    Finally, we take the limit in the proposition statement. Using the invariance principle, and the uniform integrability guaranteed by Lemma \ref{CAF2} and Equation \eqref{sis2}, one has $$\lim_{\epsilon\to 0} \mathbf E^{(x_1,...,x_n)}_{\mathrm{Sing}(\epsilon)} \bigg[e^{ \sum_{1\le i<j\le n} \int_0^t \Phi_\epsilon(X^i_s-X^j_s) ds } \prod_{j=1}^n \phi(X^j_t) \bigg] = \mathbf E_{\mathrm{BM}^{\otimes n}}^{(x_1,...,x_n)} \bigg[ e^{\gamma_{\mathrm{ext}}^2  \sum_{1\le i<j\le n} L_0^{W^i-W^j}(t) } \prod_{j=1}^n \phi(W^j_t)\bigg].$$
    The convergence is uniform on compacts because allowing $x_i=x_i^\epsilon$ to vary within some bounded set as $\epsilon\to 0$ will not affect the invariance principle or the uniform integrability in any way.
\end{proof}

\begin{proof}[Proof of Theorem \ref{mr3}] The proof is nearly identical to that of Theorem \ref{mr2} given in Section 3, except that one uses Proposition \ref{lim2}, Equation \eqref{propa1}, and $\mathcal Z^\epsilon$ in place of Proposition \ref{lim}, Equation \eqref{propa}, and $Z^\epsilon$ respectively
\end{proof}

\bibliographystyle{alpha}
\bibliography{ref.bib}
\end{document}